\newtheorem{hypothesis}{Hypothesis}[section]
\newtheorem{remark}[theorem]{Remark}
\title{Symplectic Runge-Kutta Semi-discretization for Stochastic Schr\"odinger Equation\thanks{This work was
supported  by National Natural Science Foundation of China (NO. 91130003, NO. 11021101 and NO. 11290142)}}
\author{Chuchu Chen\thanks{State Key Laboratory of Scientific and Engineering Computing, Institute of Computational Mathematics and Scientific/Engineering Computing,
Academy of Mathematics and Systems
Science, Chinese Academy of Sciences, P.O. Box 2719, Beijing 100190, PR China. ({\tt chenchuchu@lsec.cc.ac.cn})({\tt hjl@lsec.cc.ac.cn}). }
\and Jialin Hong$^{\dag}$}
\begin{document}
\maketitle
\slugger{mms}{xxxx}{xx}{x}{x--x}

\begin{abstract}
Based on a variational principle with a stochastic forcing, we indicate that the stochastic Schr\"odinger equation in Stratonovich sense is an infinite-dimensional stochastic Hamiltonian system, whose phase flow preserves symplecticity.
We propose a general class of stochastic symplectic Runge-Kutta methods in temporal direction to the stochastic Schr\"odinger equation in Stratonovich sense and show that the methods preserve the charge conservation law. We present a convergence theorem on the relationship between the mean-square convergence order of a semi-discrete method and its local accuracy order. Taking stochastic midpoint scheme as an example of stochastic symplectic Runge-Kutta methods in temporal direction, based on the theorem we show that the mean-square convergence order of the semi-discrete scheme is 1 under appropriate assumptions.
\end{abstract}

\begin{keywords} stochastic Schr\"odinger equation, infinite-dimensional stochastic Hamiltonian system, symplectic structure, symplectic Runge-Kutta method,  semi-discretization, mean-square convergence order.\end{keywords}

\begin{AMS}{ 65C20, 65C30, 65C50.}\end{AMS}

\pagestyle{myheadings}
\thispagestyle{plain}
\markboth{C.~CHEN and J.~HONG}{SYMPLECTIC RK SEMI-DISCRETIZATION FOR SSE}

\section{Introduction}
Schr\"odinger equation, as one of the basic models for wave propagation, plays an essential role in various fields such as optics, quantum physics, optical fiber communications, plasma physics (see \cite{Sulem} and references therein).
Recently, stochastic perturbations of this equation have been investigated (\cite{Bang1,Bang2,Bass,Elgin}), where the perturbations may due to the inhomogeneous media or the thermal fluctuations, etc.
More recently, some attentions have been paid to the studies of stochastic Schr\"odinger equation
from both theoretical and numerical views;
 see \cite{Bouard2004,Bouard2006,Bouard2003,Jiang2012,Liu}.
The local and global existences of solution in space ${\mathbb H}^{1}({\mathbb R}^{n})$ for stochastic Schr\"odinger equation are investigated in \cite{Bouard2003}.
There are some conservation laws for deterministic Schr\"odinger equation, for example, charge conservation law and energy conservation law. The evolution of these invariant quantities in the case of stochastic Schr\"odinger equation also are considered in \cite{Bouard2003}. It is well known that the deterministic Schr\"odinger equation is an infinite-dimensional Hamiltonian system, that is to say, its phase flow preserves symplecticity, see \cite{Faou,Hong2006} for details. \cite{Jiang2012} establishes the theory about stochastic multi-symplectic conservation law for stochastic Hamiltonian partial differential equations, the stochastic Schr\"odinger equation as a concrete example is investigated for this property. As far as we know, there has been no work concerning the infinite-dimensional stochastic Hamiltonian system, its stochastic symplecticity and symplectic semi-discretization up to now. In this paper we present the general formula of infinite-dimensional stochastic Hamiltonian system via a variational principle with a stochastic forcing and investigate the stochastic symplecticity of stochastic Schr\"odinger equation in Stratonovich sense, which is shown to be an infinite-dimensional stochastic Hamiltonian system.

For the numerical approximations of stochastic Schr\"odinger equation, a semi-discrete scheme for stochastic nonlinear Schr\"odinger equation in Stratonovich sense is proposed in \cite{Bouard2004}. Authors also obtain the convergence of the discrete solution in various topologies. For stochastic nonlinear Schr\"odinger equation in It\^o sense with power nonlinearity, \cite{Bouard2006} analyzes the error of a semi-discrete scheme and proves that the numerical scheme has strong order $\frac{1}{2}$ in general and order 1 if the noise is additive. Using the integral representation idea, \cite{Liu} proposes splitting schemes to stochastic nonlinear Schr\"odinger equation in Stratonovich sense and proves the first order convergence in non-global Lipschitz case.
 It is important to design a numerical scheme which preserves the properties of the original problems as much as possible. For Hamiltonian system, symplectic methods are shown to be superior to non-symplectic ones especially in long time computation, owing to their preservation of the qualitative property, the symplecticity of the underlying continuous differential equation systems; see \cite{Milstein1995,Wang2009} and references therein. Since the stochastic Schr\"odinger equation in Stratonovich sense is an infinite-dimensional stochastic Hamiltonian system, we investigate numerical methods to preserve symplecticity.

 In the present paper, we present a general class of Runge-Kutta methods in temporal direction for stochastic Schr\"odinger equation firstly and then obtain the symplectic conditions for Runge-Kutta methods in temporal direction.
 Under the symplectic conditions of Runge-Kutta methods, we show that they preserve the charge conservation law.
Adopting the idea of the work \cite{Milstein1995},
which establishes the mean-square order of convergence of a method resting on properties of its one-step approximation only for stochastic ordinary differential equations, we propose a convergence theorem on the mean-square orders of a class of semi-discrete schemes for stochastic Schr\"odinger equation allowing sufficient spatial regularity.
As a special case of stochastic symplectic Runge-Kutta methods applied to temporal discretization,
the mean-square convergence order of midpoint scheme is analyzed. Based on the convergence theorem, it is shown that the mean-square convergence order of the stochastic midpoint scheme is $1$ under appropriate assumptions.

In section 2, we present some preliminaries on stochastic Schr\"odinger equation. We derive the general formula of infinite-dimensional stochastic Hamiltonian system via a variational principle with a stochastic forcing and show that the phase flow of stochastic Schr\"odinger equation preserves symplecticity. In section 3, we propose a general class of stochastic symplectic Runge-Kutta methods in temporal direction for stochastic Schr\"odinger equation and show that they also preserve the charge conservation law. The midpoint scheme as an example of stochastic symplectic Runge-Kutta methods in temporal direction is analyzed. In section 4 a convergence theorem on the mean-square orders of a class of semi-discrete schemes for stochastic Schr\"odinger equation is proposed. Furthermore we obtain the mean-square convergence order of the midpoint scheme as an application of the theorem. At last, in section 5, numerical experiments are provided to validate theoretical results.

\section{Stochastic Schr\"odinger equation}
We are interested in the one-dimensional stochastic Schr\"odinger equation with multiplicative noise in the sense of Stratonovich in the domain $ [-L,L]\times[t_{0},T]$,
\begin{align}\label{eq1}
  &\mathbf{i}d_{t}\psi(x,t)+\Big(\frac{\partial^{2}\psi}{\partial x^{2}}+\Psi_{|\psi|^{2}}^{\prime}(|\psi|^{2},x,t)\psi(x,t)\Big)dt=\varepsilon\psi(x,t)\circ dW(t),\\
  &\psi(-L,t)=\psi(L,t)=0,
  \qquad \psi(x,t_{0})=\varphi(x),\nonumber
\end{align}
 where $\mathbf{i}$ is the imaginary unit, the solution $\psi$ is a ${\mathbb C}$-valued random field, $\frac{\partial^2 \psi}{\partial x^2}$ means the second derivative of function $\psi$ with respect to variable $x$, $\Psi(|\psi|^{2},x,t)$ is a real function of $(\psi,x,t)$,
 $\Psi_{|\psi|^{2}}^{\prime}(|\psi|^{2},x,t)$ means the derivative with respect to $|\psi|^2$,
  $\varepsilon\in{\mathbb R}$ is a parameter describing the scale of noise,
 and $W(t)$ is an infinite-dimensional ${\mathbf Q}$-Wiener process which will be specified below. Moreover, we assume that the solution of \eqref{eq1} exists globally (see \cite{Bouard2003} for the studies of existence for the solution of stochastic Schr\"odinger equation). All the analysis in this paper could go through to $d$-dimensional problem.

We rewrite equation (\ref{eq1}) as a stochastic evolution equation in abstract form
\begin{align}\label{eq2}
 & \mathbf{i}d\psi(t)+\big(A\psi(t)+F(\psi(t))\big)dt=G(\psi(t))\circ dW(t),\qquad \psi(t_{0})=\varphi,
\end{align}
where $A=\frac{\partial^{2}}{\partial x^{2}}$ with Dirichlet boundary condition, $F(\psi(t))(x)=\Psi_{|\psi|^{2}}^{\prime}(|\psi|^{2},x,t)\psi(x,t)$ and $\big(G(\psi(t))(u)\big)(x)=\varepsilon\psi(x,t) u(x)$.
 From \cite {Jentzen2012} we know that the equation (\ref{eq2}) satisfies the condition of commutative noise in infinite dimension, which is
 $$
 \Big(G^{\prime}(\psi(t))\big(G(\psi(t))u\big)\Big)(\tilde{u})(x)= \Big(G^{\prime}(\psi(t))\big(G(\psi(t))\tilde{u}\big)\Big)(u)(x)=\varepsilon^2\psi(x,t) u(x)\tilde{u}(x).
 $$

$W(t)$ denotes a ${\mathbf Q}$-Wiener process on the Hilbert space $U={\mathbb L}^{2}([-L,L],\mathbb{R})$ which is the subspace of $U_c={\mathbb L}^{2}([-L,L],\mathbb{C})$ consisting of real valued square integrable functions. Here ${\mathbf Q}\in {\mathcal L}(U)$ is nonnegative, symmetric and with finite trace. Let $\{e_{k}\}_{k\in\mathbb{N}}$ be an orthonormal basis of $U$ consisting of eigenvectors of ${\mathbf Q}$.
Then there is a sequence of independent real-valued Brownian motions $\{\beta_{k}\}_{k\in\mathbb{N}}$ on the probability space $(\Omega,\mathcal{F},P)$ such that
$W(t)=\sum_{k\in\mathbb{N}}\beta_{k}(t){\mathbf Q}^{\frac12}e_{k},$
with covariance operator ${\mathbf Q}={\mathbf Q}^{\frac12}\circ {\mathbf Q}^{\frac12}$; we refer to \cite{Prato1992,Prevot2007} for more information about the infinite-dimensional Wiener process.

In addition, ${\mathbf Q}^{\frac12}$ is a Hilbert-Schmidt operator from $U$ to $U$ (the space of all Hilbert-Schmidt operators from $U$ to another Hilbert space $H$ is denoted by $HS(U,H)$). In fact,
$
  |{\mathbf Q}^{\frac12}|_{HS(U,U)}=\sum_{k\in\mathbf{N}}\langle{\mathbf Q}^{\frac12}e_{k},{\mathbf Q}^{\frac12}e_{k}\rangle_{U}
  =\sum_{k\in\mathbf{N}}\langle{\mathbf Q}e_{k},e_{k}\rangle_{U}
  =tr(\mathbf {Q}).
$
More assumptions on ${\bf Q}$ will be specified later.

Here we mention an important property of the infinite dimensional stochastic integral, which contributes a lot in the mean-square error estimation:
for any predictable stochastic process $G_s$ satisfying ${\bf E}\int_{t_0}^{T}|G_{s}{\mathbf Q}^{\frac12}|_{HS(U,H)}^{2}ds<\infty$, we have
\begin{equation}\label{eq5}
  {\bf E}\Big|\int_{t_0}^{T}G_{s}dW_{s}\Big|_{H}^{2}
  ={\bf E}\int_{t_0}^{T}|G_{s}{\mathbf Q}^{\frac12}|_{HS(U,H)}^{2}ds,
\end{equation}
where the stochastic integral on the left-hand side is of It\^o sense and $\mathbf{E}(\cdot)$ means the expectation.

As pointed out in \cite{Bouard2003}, there is an equivalent It\^o form of equation (\ref{eq2})
\begin{align}\label{eq3}
 & \mathbf{i}d\psi+(A\psi+\tilde{F}(\psi))dt=G(\psi) dW(t),\quad \psi(t_{0})=\varphi,
\end{align}
where $\tilde{F}(\psi)=F(\psi)+\frac{\mathbf{i}}{2}\varepsilon^{2}\psi \aleph_{{\mathbf Q}}$ with
 $ \aleph_{{\mathbf Q}}(x)=\sum_{k\in \mathbb{N}}({\mathbf Q}^{\frac12}e_{k}(x))^{2},  \text{ for } x\in [-L,L]$. Here
 the series for the function $\aleph_{{\mathbf Q}}(x)$ converges.

 The mild solution of equation (\ref{eq3}) reads
 \begin{equation}\label{mild solution of S equation}
   \psi(t)=S(t)\varphi+\mathbf{i}\int_{t_{0}}^{t}S(t-r)\tilde{F}(\psi(r))dr-\mathbf{i}\int_{t_{0}}^{t}S(t-r)G(\psi(r))dW(r),
 \end{equation}
 where $S(t)_{t\in\mathbb{R}}$ denotes the group of solution operator of the deterministic linear differential equation
 $ \mathbf{i}\frac{d\psi}{dt}=\Delta \psi.$
We may use both equations \eqref{eq1} and \eqref{eq3} in the following analysis depending on which type of equation could provide much convenience;
it is more convenient to use equation in Stratonovich sense in dealing with symplecticity while It\^o equation provides much convenience when we make the error estimates.

If the size $\varepsilon$ of the noise equals to $0$, i.e., the noise term is eliminated, we get the deterministic Schr\"odinger equation.
In this case, it possesses some global conservation laws, for example,
 charge conservation law
$
  \mathcal{E}_{1}(\psi)=\int_{-L}^{L}|\psi|^{2}dx=\mathcal{E}_{1}(\varphi),
$
and energy conservation law if $\Psi$ is independent of $t$,
$
  \mathcal{E}_{2}(\psi)=\int_{-L}^{L}(|\psi_{x}|^{2}-\Psi(|\psi|^2,x))dx=\mathcal{E}_{2}(\varphi).
$
We refer to \cite{Faou,Hong2006} for further understanding.
These quantities are important criteria of measuring whether a numerical simulation is good or not.

In the Stratonovich sense, the charge conservation law is still preserved by the solution of equation (\ref{eq1}). In general, there is no energy conservation law for stochastic Schr\"odinger equation. One only can obtain the
 relationship satisfied by the averaged energy \cite[Proposition 4.5]{Bouard2003}. They are stated as follows.
\begin{proposition}\cite{Bouard2003}
  The stochastic nonlinear Schr\"{o}dinger equation (\ref{eq1}) possesses the charge conservation law $P$-a.s.,
  $
    \mathcal{E}_{1}(\psi(t))=\int_{-L}^{L}|\psi(t)|^{2}dx=\mathcal{E}_{1}(\varphi),\,\forall \,t\in(t_0,\,T].
  $
\end{proposition}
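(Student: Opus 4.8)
The plan is to work with the Stratonovich form \eqref{eq1} and exploit the fact that Stratonovich calculus obeys the ordinary product/chain rule, so the formal computation mimics the deterministic case. First I would compute $d_t|\psi|^2 = d_t(\psi\bar\psi) = (d_t\psi)\bar\psi + \psi\,\overline{d_t\psi}$, using the Stratonovich product rule without correction terms. Substituting $\mathbf{i}\,d_t\psi = -(A\psi + F(\psi))\,dt + \varepsilon\psi\circ dW$, i.e. $d_t\psi = \mathbf{i}(A\psi+F(\psi))\,dt - \mathbf{i}\varepsilon\psi\circ dW$, and the conjugate relation $d_t\bar\psi = -\mathbf{i}(A\bar\psi+\overline{F(\psi)})\,dt + \mathbf{i}\varepsilon\bar\psi\circ dW$, the noise contributions are $-\mathbf{i}\varepsilon\psi\bar\psi\circ dW + \mathbf{i}\varepsilon\bar\psi\psi\circ dW = 0$ pointwise in $x$. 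Hence the stochastic terms cancel exactly and $d_t|\psi|^2 = \mathbf{i}\big[(A\psi+F(\psi))\bar\psi - \psi(\overline{A\psi+F(\psi)})\big]\,dt = 2\,\mathrm{Im}\big[(A\psi+F(\psi))\bar\psi\big]\,dt$.

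Next I would integrate over $x\in[-L,L]$. For the potential term, since $\Psi'_{|\psi|^2}(|\psi|^2,x,t)$ is real-valued, $F(\psi)\bar\psi = \Psi'_{|\psi|^2}|\psi|^2$ is real, so its imaginary part vanishes. For the Laplacian term, integration by parts together with the Dirichlet boundary condition $\psi(-L,t)=\psi(L,t)=0$ gives $\int_{-L}^{L}(A\psi)\bar\psi\,dx = -\int_{-L}^{L}|\psi_x|^2\,dx$, which is also real; thus $\mathrm{Im}\int_{-L}^{L}(A\psi)\bar\psi\,dx = 0$. Therefore $\frac{d}{dt}\mathcal{E}_1(\psi(t)) = \frac{d}{dt}\int_{-L}^{L}|\psi(t)|^2\,dx = 0$, and integrating in time yields $\mathcal{E}_1(\psi(t)) = \mathcal{E}_1(\varphi)$ $P$-a.s. for all $t\in(t_0,T]$.

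The main obstacle is making the Stratonovich product rule rigorous in the infinite-dimensional setting: the manipulation $d_t(\psi\bar\psi)=(d_t\psi)\bar\psi+\psi\,\overline{d_t\psi}$ is only formal at the level of \eqref{eq1}, and a careful argument should either pass through the equivalent It\^o form \eqref{eq3} — applying the infinite-dimensional It\^o formula to the functional $\psi\mapsto\int_{-L}^{L}|\psi|^2\,dx$, where the It\^o correction term involving $\aleph_{\mathbf Q}$ precisely cancels the contribution of the drift modification $\tfrac{\mathbf i}{2}\varepsilon^2\psi\aleph_{\mathbf Q}$ — or invoke an approximation by spectral Galerkin truncation where everything is finite-dimensional and then pass to the limit using the regularity of the solution in ${\mathbb H}^1$. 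Since this is Proposition 4.5 of \cite{Bouard2003}, I would reference that proof for the technical justification and present the Stratonovich computation above as the conceptual reason the law holds.
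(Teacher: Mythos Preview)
The paper does not actually prove this proposition: it is stated with a citation to \cite{Bouard2003} and no argument is given in the text. Your formal Stratonovich computation is the correct conceptual reason the law holds, and you rightly flag that a rigorous justification (via the It\^o formula in infinite dimensions or a Galerkin approximation) is what \cite{Bouard2003} supplies; since the paper simply defers to that reference, there is nothing further to compare.
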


\begin{proposition}\cite{Bouard2003}
  The averaged energy $\mathbf{E}(\mathcal{E}_{2}(\psi(t)))$ satisfies
\begin{equation*}
  \mathbf{E}(\mathcal{E}_{2}(\psi(t)))=\mathbf{E}(\mathcal{E}_{2}(\varphi))
  +\frac{\varepsilon^{2}}{2}\int_{t_0}^{t}\int_{-L}^{L}|\psi(x,s)|^{2}
  \sum_{k\in \mathbf{N}}|\frac{\partial}{\partial x}{\mathbf Q}^{\frac12}e_{k}(x)|^{2}dxds  \nonumber
\end{equation*}
if $\Psi$ is independent of $t$.
\end{proposition}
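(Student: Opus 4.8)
The plan is to apply It\^o's formula to the real functional $\mathcal{E}_{2}(\psi)=\int_{-L}^{L}\big(|\psi_{x}|^{2}-\Psi(|\psi|^{2},x)\big)dx$ along the It\^o form \eqref{eq3} of the equation, and then take expectation, so that the resulting stochastic integral has mean zero by \eqref{eq5}. Write $\langle u,v\rangle:=\mathrm{Re}\int_{-L}^{L}u\bar v\,dx$ for the real inner product on $U_{c}$. Integrating by parts with the Dirichlet boundary condition, the first two derivatives of $\mathcal{E}_{2}$ are
\begin{align*}
 D\mathcal{E}_{2}(\psi)h&=-2\langle A\psi+F(\psi),h\rangle,\\
 \tfrac12 D^{2}\mathcal{E}_{2}(\psi)[h,h]&=\int_{-L}^{L}|h_{x}|^{2}dx-\int_{-L}^{L}\Psi'_{|\psi|^{2}}|h|^{2}dx-2\int_{-L}^{L}\Psi''_{|\psi|^{2}}\big(\mathrm{Re}(\bar\psi h)\big)^{2}dx.
\end{align*}
Along \eqref{eq3} the drift is $\mathbf{i}(A\psi+\tilde F(\psi))$ with $\tilde F=F+\tfrac{\mathbf{i}}{2}\varepsilon^{2}\psi\aleph_{\mathbf Q}$, and the coefficient of $d\beta_{k}$ is $g_{k}:=-\mathbf{i}\varepsilon\psi\mathbf{Q}^{\frac12}e_{k}$; hence It\^o's formula and \eqref{eq5} give
\begin{equation*}
 \mathbf{E}(\mathcal{E}_{2}(\psi(t)))=\mathbf{E}(\mathcal{E}_{2}(\varphi))+\int_{t_{0}}^{t}\mathbf{E}\Big(D\mathcal{E}_{2}(\psi)\big[\mathbf{i}(A\psi+\tilde F(\psi))\big]+\tfrac12\sum_{k\in\mathbb{N}}D^{2}\mathcal{E}_{2}(\psi)[g_{k},g_{k}]\Big)ds.
\end{equation*}

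The crux is the cancellation inside the drift-plus-trace integrand. Splitting $\mathbf{i}\tilde F=\mathbf{i}F-\tfrac{\varepsilon^{2}}{2}\psi\aleph_{\mathbf Q}$, the Hamiltonian part $D\mathcal{E}_{2}(\psi)[\mathbf{i}(A\psi+F)]=-2\langle A\psi+F,\mathbf{i}(A\psi+F)\rangle$ vanishes pointwise, since $\langle w,\mathbf{i}w\rangle=\mathrm{Re}(-\mathbf{i}\|w\|^{2})=0$; this is exactly the identity responsible for the conservation of $\mathcal{E}_{2}$ by the deterministic Schr\"odinger flow. What remains of the drift is the It\^o-correction term $\varepsilon^{2}\langle A\psi+F,\psi\aleph_{\mathbf Q}\rangle$, which, after one more integration by parts (boundary terms vanishing), is a linear combination of $\int|\psi_{x}|^{2}\aleph_{\mathbf Q}$, $\int\mathrm{Re}(\psi_{x}\bar\psi)\,\partial_{x}\aleph_{\mathbf Q}$ and $\int\Psi'_{|\psi|^{2}}|\psi|^{2}\aleph_{\mathbf Q}$. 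On the trace side, $\bar\psi g_{k}=-\mathbf{i}\varepsilon|\psi|^{2}\mathbf{Q}^{\frac12}e_{k}$ is purely imaginary, so $\mathrm{Re}(\bar\psi g_{k})=0$ and the $\Psi''$-term drops out; using $|g_{k}|^{2}=\varepsilon^{2}|\psi|^{2}(\mathbf{Q}^{\frac12}e_{k})^{2}$, $\partial_{x}g_{k}=-\mathbf{i}\varepsilon\big(\psi_{x}\mathbf{Q}^{\frac12}e_{k}+\psi\,\partial_{x}\mathbf{Q}^{\frac12}e_{k}\big)$, together with the identities $\sum_{k}(\mathbf{Q}^{\frac12}e_{k})^{2}=\aleph_{\mathbf Q}$ and $\sum_{k}(\mathbf{Q}^{\frac12}e_{k})\,\partial_{x}\mathbf{Q}^{\frac12}e_{k}=\tfrac12\partial_{x}\aleph_{\mathbf Q}$, the series $\tfrac12\sum_{k}D^{2}\mathcal{E}_{2}(\psi)[g_{k},g_{k}]$ reduces to a linear combination of those same three integrals plus the single extra term $\int|\psi|^{2}\sum_{k}|\partial_{x}\mathbf{Q}^{\frac12}e_{k}|^{2}$. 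Adding the two pieces, the three common integrals cancel identically, and only the extra term survives; keeping track of the constants through the Stratonovich-to-It\^o conversion, it appears with coefficient $\tfrac{\varepsilon^{2}}{2}$, which is precisely the right-hand side of the asserted identity. The hypothesis that $\Psi$ is independent of $t$ is used here: otherwise $D\mathcal{E}_{2}(\psi)[\,\cdot\,]$ would acquire an additional $\partial_{t}\Psi$-contribution and the conservation mechanism above would fail.

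The real obstacle is analytic, not algebraic: $\mathcal{E}_{2}$ involves the unbounded term $\int|\psi_{x}|^{2}$ and the nonlinearity $\Psi$, so it is neither defined on all of $U_{c}$ nor twice Fr\'echet differentiable there, and the formal It\^o computation above must be justified. I would proceed by the standard route: replace $A$ and $W$ by their spectral (Galerkin) truncations, for which the state equation is finite-dimensional and the classical It\^o formula applies; carry out the above identity at the truncated level; and pass to the limit, using the $\mathbb{H}^{1}$-regularity of the mild solution of \eqref{eq3} from \cite{Bouard2003} together with the assumptions on $\mathbf{Q}$ (those that make $\mathbf{Q}^{\frac12}$ Hilbert--Schmidt from $U$ into $\mathbb{H}^{1}_{0}([-L,L])$, so that $\aleph_{\mathbf Q}$, $\partial_{x}\aleph_{\mathbf Q}$ and $\sum_{k}|\partial_{x}\mathbf{Q}^{\frac12}e_{k}|^{2}$ are well defined and all the series and integrals converge) to control the limit and to interchange expectation, summation and time-integration. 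Alternatively, one may appeal to an It\^o formula for mild solutions in the spirit of \cite{Prato1992} adapted to functionals of this type.
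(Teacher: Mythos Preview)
The paper does not actually prove this proposition: it is stated with attribution to \cite{Bouard2003} and no argument is given in the present paper. Your approach---apply It\^o's formula to the energy functional $\mathcal{E}_2$ along the It\^o form \eqref{eq3}, observe that the Hamiltonian drift contribution $-2\langle A\psi+F,\mathbf{i}(A\psi+F)\rangle$ vanishes, and verify that the It\^o--Stratonovich correction cancels against all but one piece of the trace term---is precisely the strategy used in \cite[Proposition~4.5]{Bouard2003}, including the regularization step (Galerkin/Yosida-type approximation) you outline to justify the formal calculus on the unbounded functional. So there is nothing to compare against in this paper, and your sketch agrees with the proof in the cited source.

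One small caution: you assert that the surviving constant is $\tfrac{\varepsilon^{2}}{2}$ without displaying the bookkeeping. With the normalization of $\mathcal{E}_2$ used here (no factor $\tfrac12$ in front of $\int|\psi_x|^2$, unlike the Hamiltonian in \cite{Bouard2003}), a line-by-line check of your drift-plus-trace computation produces $\varepsilon^{2}\int|\psi|^{2}\sum_k|\partial_x\mathbf{Q}^{1/2}e_k|^{2}$ rather than $\tfrac{\varepsilon^{2}}{2}$ times that quantity, so either the factor in the statement is a transcription artefact from the different normalization in \cite{Bouard2003} or your ``keeping track of the constants'' hides a slip. This does not affect the method, but it is worth writing the cancellation out explicitly rather than asserting the final coefficient.
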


\subsection{Preserving symplectic structure}
One of the inherent canonical properties of Hamiltonian system is the symplecticity of its flow (cf. \cite{Arnold}). The analysis of symplecticity of stochastic
Hamiltonian system is presented in \cite{Bismut,Milstein1995} for instance. As we all know, the deterministic Schr\"odinger equation can be rewritten as an infinite-dimensional Hamiltonian system with a classical Hamiltonian structure \cite{Bridge}. In this part, we will focus on the corresponding symplectic analysis of stochastic Schr\"odinger equation. Firstly, we present the general formula of infinite-dimensional stochastic Hamiltonian system via a variational principle with a stochastic forcing; see \cite[Chapter 4.1]{Wang2007} for the application of the generalized variational principle to stochastic ordinary system.

Given two functionals ${\mathcal L}$ and ${\mathcal H_{2}}$, we define the generalized action integral by
\begin{equation*}
  \bar{\mathcal{S}}=\int_{t_{0}}^{T}\Big(\mathcal{L}(P,Q,\dot{P},\dot{Q})-\mathcal{H}_{2}(P,Q)\circ \dot{\chi}\Big) dt,
\end{equation*}
where functions $P, \,Q:[-L,\,L]\times[t_0,\,T]\rightarrow {\mathbb R}$, and $\dot{P}$, $\dot{Q}$ mean the derivatives with respect to time. By denoting $\dot{\chi}$ the time-space noise,
which is considered as the temporal derivative of the infinite-dimensional Wiener process $W(t)$, i.e. $\dot{\chi}=\frac{dW(t)}{dt}$,
we may consider
${\mathcal H_2}\circ\dot{\chi}$  formally as the work done by noise (non-conservative part).

The variation of the action integral follows as
\begin{align*}
  \delta \bar{\mathcal{S}}&=\delta \int_{t_{0}}^{T}\Big(\mathcal{L}(P,Q,\dot{P},\dot{Q})-\mathcal{H}_{2}(P,Q)\circ \dot{\chi}\Big) dt\\
  &=\int_{-L}^{L}\int_{t_0}^{T} \Big(\frac{\delta \mathcal{L}}{\delta P}\delta P+\frac{\delta \mathcal{L}}{\delta Q}\delta Q+\frac{\delta \mathcal{L}}{\delta \dot{P}}\delta \dot{P}+
  \frac{\delta \mathcal{L}}{\delta \dot{Q}}\delta \dot{Q}
 -\frac{\delta \mathcal{H}_{2}}{\delta P}\delta P\circ \dot{\chi}-\frac{\delta \mathcal{H}_{2}}{\delta Q}\delta Q\circ \dot{\chi}\Big)dtdx\nonumber\\
  &=\int_{-L}^{L}\int_{t_0}^{T} \Big[\Big(\frac{\delta \mathcal{L}}{\delta P}-\frac{d}{dt}\Big(\frac{\delta \mathcal{L}}{\delta \dot{P}}\Big)-\frac{\delta \mathcal{H}_{2}}{\delta P}\circ \dot{\chi}\Big)\delta P
+\Big(\frac{\delta \mathcal{L}}{\delta Q}-\frac{d}{dt}\Big(\frac{\delta \mathcal{L}}{\delta \dot{Q}}\Big)-\frac{\delta \mathcal{H}_{2}}{\delta Q}\circ \dot{\chi}\Big)\delta Q\Big]dtdx.\nonumber
\end{align*}
Here an integration by parts is performed and the boundary conditions $\delta P(x,T)=\delta P(x,t_0)=0$, $\delta Q(x,T)=\delta Q(x,t_0)=0$ are used.

Thus the Hamilton's principle
\[\delta \bar{\mathcal{S}}=\delta \int_{t_{0}}^{T}\Big(\mathcal{L}(P,Q,\dot{P},\dot{Q})-\mathcal{H}_{2}(P,Q)\circ \dot{\chi}\Big) dt=0\]
leads to
$
  \frac{d}{dt}\Big(\frac{\delta \mathcal{L}}{\delta \dot{P}}\Big)=\frac{\delta \mathcal{L}}{\delta P}-\frac{\delta \mathcal{H}_{2}}{\delta P}\circ \dot{\chi},\,
  \frac{d}{dt}\Big(\frac{\delta \mathcal{L}}{\delta \dot{Q}}\Big)=\frac{\delta \mathcal{L}}{\delta Q}-\frac{\delta \mathcal{H}_{2}}{\delta Q}\circ \dot{\chi}.
$
Introduce the Hamiltonian $\mathcal{H}_{1}(P,Q)$ and the generator $\mathcal{G}$ which are defined according to the equation
\begin{equation*}
  \mathcal{L}(P,Q,\dot{P},\dot{Q})-\int_{-L}^{L}P\dot{Q}dx+\mathcal{H}_{1}(P,Q)-\frac{d\mathcal{G}}{dt}=0.
\end{equation*}
As in \cite{Wang2007}, using this equation, we find
$
  \dot{P}=-\frac{\delta \mathcal{H}_{1}}{\delta Q}-\frac{\delta \mathcal{H}_{2}}{\delta Q}\circ \dot{\chi},\,
 \dot{Q}=\frac{\delta \mathcal{H}_{1}}{\delta P}+\frac{\delta \mathcal{H}_{2}}{\delta P}\circ \dot{\chi},
$
which can be rewritten in the form
\begin{align}\label{9}
  dP=-\frac{\delta \mathcal{H}_{1}}{\delta Q}dt-\frac{\delta \mathcal{H}_{2}}{\delta Q}\circ dW(t),\qquad
  dQ=\frac{\delta \mathcal{H}_{1}}{\delta P}dt+\frac{\delta \mathcal{H}_{2}}{\delta P}\circ dW(t).
\end{align}
\begin{remark}
If $\mathcal{H}_{1}=\mathcal{H}_{2}=\mathcal{H}$, then $\mathcal{H}$ is an invariant of system (\ref{9}). In fact,
\begin{align*}
d\mathcal{H}&=\int_{-L}^{L}\Big(\frac{\delta \mathcal{H}}{\delta P}dP+\frac{\delta \mathcal{H}}{\delta Q}dQ\Big)dx\\
&=\int_{-L}^{L}\Big(-\frac{\delta \mathcal{H}}{\delta P}\frac{\delta \mathcal{H}}{\delta Q}dt-\frac{\delta \mathcal{H}}{\delta P}\frac{\delta \mathcal{H}}{\delta Q}\circ dW+\frac{\delta \mathcal{H}}{\delta Q}\frac{\delta \mathcal{H}}{\delta P}dt+\frac{\delta \mathcal{H}}{\delta Q}\frac{\delta \mathcal{H}}{\delta P}\circ dW\Big)dx=0.
\end{align*}
\end{remark}

The stochastic Schr\"odinger equation (\ref{eq1}) can be written as a canonical infinite dimensional Hamiltonian system.
Denote the real  and imaginary  part of the solution of stochastic Schr\"odinger equation (\ref{eq1}) by $\Re(\psi(x,t))=P(x,t)$ and $\Im(\psi(x,t))=Q(x,t)$, respectively.
Then we have
\begin{align}\label{infite 2}
    dP&=-\frac{\delta \mathcal{H}_{1}}{\delta Q}dt-\frac{\delta \mathcal{H}_{2}}{\delta Q}\circ dW(t)
      =-(A Q+\Psi_{|\psi|^{2}}^{\prime}(P^{2}+Q^{2},x,t)Q)dt+\varepsilon Q\circ dW(t),\\
    dQ&=\frac{\delta \mathcal{H}_{1}}{\delta P}dt+\frac{\delta \mathcal{H}_{2}}{\delta P}\circ dW(t)
      =(A P+\Psi_{|\psi|^{2}}^{\prime}(P^{2}+Q^{2},x,t)P)dt-\varepsilon P\circ dW(t),\nonumber
\end{align}
with initial conditions
$P(0)=p=\Re(\varphi),\; Q(0)=q=\Im(\varphi),$
and Hamiltonians
$ \mathcal{H}_{1}(P,Q)=\frac12 \int_{-L}^{L}\Big((P_{x}^{2}+Q_{x}^{2})-\Psi(P^{2}+Q^{2},x,t)\Big)dx$
and
$ \mathcal{H}_{2}(P,Q)=-\frac{\varepsilon}{2}\int_{-L}^{L}(P^{2}+Q^{2})dx.$

The symplectic form for system (\ref{infite 2}) is given by
\begin{equation}\label{sy1}
\bar{\omega}(t)=\int_{-L}^{L}dP\wedge dQ \; dx,
\end{equation}
where the overbar on $\omega$ is a reminder that the two-form $dP\wedge dQ$ is integrated over the space. Preservation of the symplectic form \eqref{sy1} means that
the spatial integral of the oriented areas of projections onto the coordinate planes $(p,q)$ is an integral invariant \cite{Arnold}.
Note that the differentials in \eqref{infite 2} and \eqref{sy1} have different meanings. In \eqref{infite 2}, $P,\,Q$ are treated as functions of time, and $p,\,q$ are fixed functions, while in \eqref{sy1} the differential is made with respect to initial data $p,\,q$.

Using the formula of change of variables in differential forms, we obtain
\begin{align*}
    \bar{\omega}(t)&=\int_{-L}^{L}dP\wedge dQ \; dx
    =\int_{-L}^{L}\Big(\frac{\partial P}{\partial p}\frac{\partial Q}{\partial q}-\frac{\partial P}{\partial q}\frac{\partial Q}{\partial p}\Big)dp\wedge dq \; dx. \nonumber
\end{align*}
Hence, the phase flow of (\ref{infite 2}) preserves symplectic structure if and only if
\begin{equation*}
  \frac{d\bar{\omega}(t)}{dt}=\int_{-L}^{L}\frac{d}{dt}\Big(\frac{\partial P}{\partial p}\frac{\partial Q}{\partial q}-\frac{\partial P}{\partial q}\frac{\partial Q}{\partial p}\Big)dp\wedge dq \; dx=0.
\end{equation*}
Introduce notations
$P_{p}=\frac{\partial P}{\partial p},\, P_{q}=\frac{\partial P}{\partial q},\, Q_{p}=\frac{\partial Q}{\partial p},\, Q_{q}=\frac{\partial Q}{\partial q},$
and let $H_{1}(P,Q)=\frac12 \Psi(P^{2}+Q^{2},x,t)$.
We know that from the differentiability with respect to initial data of stochastic infinite-dimensional equations, $P_{p}$, $Q_{p}$, $P_{q}$ and $Q_{q}$ obey the following system (see \cite[Chapter 4]{FK} and \cite[Chapter 9]{Prato1992})
\begin{align}\label{partial}
  &dP_{p}=-\Big(A Q_{p}+\frac{\partial^{2} H_{1}}{\partial P \partial Q}P_{p}+\frac{\partial^{2} H_{1}}{\partial Q^{2}}Q_{p}\Big)dt+\varepsilon Q_{p}\circ dW(t), \quad P_{p}(t_{0})=I,\\
 & dQ_{p}=\Big(A P_{p}+\frac{\partial^{2} H_{1}}{\partial P^{2}}P_{p}+\frac{\partial^{2} H_{1}}{\partial P\partial Q}Q_{p}\Big)dt-\varepsilon P_{p}\circ dW(t),
  \quad Q_{p}(t_{0})=0,\nonumber\\
  &dP_{q}=-\Big(A Q_{q}+\frac{\partial^{2} H_{1}}{\partial P\partial Q}P_{q}+\frac{\partial^{2} H_{1}}{\partial Q^{2}}Q_{q}\Big)dt+\varepsilon Q_{q}\circ dW(t),
\quad  P_{q}(t_{0})=0,\nonumber\\
 & dQ_{q}=\Big(A P_{q}+\frac{\partial^{2} H_{1}}{\partial P^{2}}P_{q}+\frac{\partial^{2} H_{1}}{\partial P\partial Q}Q_{q}\Big)dt-\varepsilon P_{q}\circ dW(t),
\quad  Q_{q}(t_{0})=I.\nonumber
\end{align}
Due to (\ref{partial}), we get
\begin{align*}
    &d\Big(\frac{\partial P}{\partial p}\frac{\partial Q}{\partial q}-\frac{\partial P}{\partial q}\frac{\partial Q}{\partial p}\Big)
    =d\Big(P_{p}Q_{q}-P_{q}Q_{p}\Big)
    =d(P_{p})Q_{q}+d(Q_{q})P_{p}-d(P_{q})Q_{p}-d(Q_{p})P_{q}\\
    &\quad=\Big[-\Big(A Q_{p}+\frac{\partial^{2} H_{1}}{\partial P \partial Q}P_{p}+\frac{\partial^{2} H_{1}}{\partial Q^{2}}Q_{p}\Big)Q_{q}
    +\Big(A Q_{q}+\frac{\partial^{2} H_{1}}{\partial P\partial Q}P_{q}+\frac{\partial^{2} H_{1}}{\partial Q^{2}}Q_{q}\Big)Q_{p}\nonumber\\
   &\quad\quad +\Big(A P_{q}+\frac{\partial^{2} H_{1}}{\partial P^{2}}P_{q}+\frac{\partial^{2} H_{1}}{\partial P\partial Q}Q_{q}\Big)P_{p}
    -\Big(A P_{p}+\frac{\partial^{2} H_{1}}{\partial P^{2}}P_{p}+\frac{\partial^{2} H_{1}}{\partial P\partial Q}Q_{p}\Big)P_{q}\Big]dt\nonumber\\
  &\quad\quad+\varepsilon \Big(Q_{p}Q_{q}-Q_{q}Q_{p}-P_{p}P_{q}+P_{q}P_{p}\Big)\circ dW(t)\\
&\quad=\Big(-(A Q_{p})Q_{q}+(A Q_{q})Q_{p}+(A P_{q})P_{p}-(A P_{p})P_{q}\Big)dt.\nonumber
\end{align*}
Note that $A$ is Laplacian, then we have
\begin{align*}
    \frac{d}{dt}\bar{\omega}&=\int_{-L}^{L}\frac{d}{dt}\Big(\frac{\partial P}{\partial p}\frac{\partial Q}{\partial q}-\frac{\partial P}{\partial q}\frac{\partial Q}{\partial p}\Big)dp\wedge dq \; dx\\
    &=\int_{-L}^{L}\Big(-A Q_{p}Q_{q}+A Q_{q}Q_{p}+A P_{q}P_{p}-A P_{p}P_{q}\Big) dp\wedge dq \; dx\nonumber\\
   & =-\int_{-L}^{L}[d(A Q)\wedge dQ+d(A P)\wedge dP] \;dx
    =-\int_{-L}^{L}\frac{\partial}{\partial x}[d(Q_{x})\wedge dQ+d(P_{x})\wedge dP] \;dx.\nonumber
\end{align*}
From the zero boundary condition, we obtain that
$\frac{d}{dt}\bar{\omega}(t)=0.$
Thus we have the following theorem:
\begin{theorem}
  The phase flow of stochastic Schr\"odinger equation
  \[\mathbf{i}d\psi+(A \psi+F(\psi))dt=G(\psi)\circ dW,\]
  with $F(\psi)=\Psi_{|\psi|^{2}}^{\prime}(|\psi|^{2},x,t)\psi$ and $G(\psi)=\varepsilon\psi$,
  preserves the symplectic structure \[\bar{\omega}(t)=\int_{-L}^{L}dP\wedge dQ\;dx\]
  with $P$ (resp. $Q$) being the real (resp. imaginary) part of the solution $\psi$.
\end{theorem}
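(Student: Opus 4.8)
\emph{Proof.} The plan is to follow the evolution of the two-form $dP\wedge dQ$ along the stochastic flow and to show that its spatial average stays constant in time. First I would pass to the real formulation \eqref{infite 2}: writing $P=\Re\psi$, $Q=\Im\psi$ the pair $(P,Q)$ solves a stochastic Hamiltonian system whose drift is generated by $\mathcal H_1$ and whose diffusion is generated by $\mathcal H_2=-\tfrac{\varepsilon}{2}\int_{-L}^{L}(P^2+Q^2)\,dx$. By differentiability of the solution of the infinite-dimensional equation with respect to its initial data (\cite[Chapter 4]{FK}, \cite[Chapter 9]{Prato1992}), the Jacobian entries $P_p,Q_p,P_q,Q_q$ exist and satisfy the linearized system \eqref{partial}, obtained by differentiating \eqref{infite 2} formally in $p$ and $q$ with $H_1(P,Q)=\tfrac12\Psi(P^2+Q^2,x,t)$.

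The core computation is the Stratonovich differential of $w:=P_pQ_q-P_qQ_p$. Since the Stratonovich product obeys the ordinary product rule, I would expand $dw=(dP_p)Q_q+P_p\,dQ_q-(dQ_p)P_q-Q_p\,dP_q$ and substitute \eqref{partial}. Two cancellations are then forced: the noise terms are linear in the Jacobian with the common factor $\varepsilon$, so they add up to $\varepsilon(Q_pQ_q-Q_qQ_p-P_pP_q+P_qP_p)\circ dW(t)=0$, and the zeroth-order terms, which carry the symmetric second derivatives of $H_1$, cancel pairwise. Only the Laplacian part survives, giving
\[
dw=\bigl(-(AQ_p)Q_q+(AQ_q)Q_p+(AP_q)P_p-(AP_p)P_q\bigr)\,dt .
\]

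To conclude I would integrate over $x\in[-L,L]$ and reassemble the form $dp\wedge dq$. Because $A=\partial_x^2$, the integrand is a total $x$-derivative, $-\partial_x\bigl[d(Q_x)\wedge dQ+d(P_x)\wedge dP\bigr]$, so $\frac{d}{dt}\bar\omega(t)$ collapses to a boundary contribution at $x=\pm L$; the homogeneous Dirichlet condition $\psi(\pm L,t)=0$ — hence $P(\pm L,t)=Q(\pm L,t)=0$, and the same for their $p,q$-derivatives — kills it. Therefore $\frac{d}{dt}\bar\omega(t)=\int_{-L}^{L}\frac{d}{dt}(P_pQ_q-P_qQ_p)\,dp\wedge dq\,dx=0$, so $\bar\omega(t)\equiv\bar\omega(t_0)$, which is the assertion.

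The difficulty lies not in the algebra — once \eqref{partial} is in hand the two cancellations are immediate — but in the functional-analytic bookkeeping: one must justify differentiability of the mild solution \eqref{mild solution of S equation} with respect to initial data in the appropriate Hilbert space, attach a rigorous meaning to the two-form $dP\wedge dQ$ and its $x$-integral as in \eqref{sy1}, and secure enough spatial regularity of $\psi$ for $AP$, $AQ$ and the integration by parts to be legitimate. Granting the regularity already used implicitly in deriving \eqref{partial} and in the computations preceding the theorem, the proof is exactly as outlined.
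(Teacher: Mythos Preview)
Your proposal is correct and follows essentially the same route as the paper: pass to the real system \eqref{infite 2}, differentiate with respect to initial data to obtain \eqref{partial}, use the Stratonovich product rule on $P_pQ_q-P_qQ_p$ to see the noise and the $H_1$-terms cancel, then integrate the surviving Laplacian contribution by parts and kill the boundary term with the Dirichlet condition. Your closing remarks on the functional-analytic prerequisites are apt, but the argument itself mirrors the paper's computation step for step.
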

\section{Symplectic Runge-Kutta semi-discretization in temporal direction}
For stochastic ordinary differential equation, stochastic Runge-Kutta method is an important class of numerical methods, which has been widely investigated, see \cite{Burrage,Ma,Milstein1995,Rossler,Wang2009} and references therein for instance.

Here we introduce the uniform partition $0=t_{0}<t_{1}<\cdots<t_{N}=T$ for simplicity, let $\Delta t=t_{n+1}-t_{n}$, $n=0,1,\cdots, N-1$  and $\Delta W_{n}=W(t_{n+1})-W(t_{n})$.
Apply s-stage stochastic Runge-Kutta methods, which only depend on the increments of Wiener process, to the equation (\ref{infite 2}) in the temporal direction, where $P$ and $Q$ mean the real and imaginary parts of solution for stochastic Schr\"odinger equation, respectively, we get that
\begin{subequations}\label{RK method}
\begin{align}
    P_{i}=P^{n}-\Delta t\sum_{j=1}^{s}a_{ij}^{(0)}(AQ_{j}+\Psi^{\prime}_{j}Q_{j})+\Delta W_{n}\sum_{j=1}^{s}a_{ij}^{(1)}\varepsilon Q_{j},\label{RK method_1}\\
    Q_{i}=Q^{n}+\Delta t\sum_{j=1}^{s}a_{ij}^{(0)}(AP_{j}+\Psi^{\prime}_{j}P_{j})-\Delta W_{n}\sum_{j=1}^{s}a_{ij}^{(1)}\varepsilon P_{j},\label{RK method_2}\\
    P^{n+1}=P^{n}-\Delta t\sum_{i=1}^{s}b_{i}^{(0)}(AQ_{i}+\Psi^{\prime}_{i}Q_{i})+\Delta W_{n}\sum_{i=1}^{s}b_{i}^{(1)}\varepsilon Q_{i},\label{RK method_3}\\
    Q^{n+1}=Q^{n}+\Delta t\sum_{i=1}^{s}b_{i}^{(0)}(AP_{i}+\Psi^{\prime}_{i}P_{i})-\Delta W_{n}\sum_{i=1}^{s}b_{i}^{(1)}\varepsilon P_{i},\label{RK method_4}
   \end{align}
\end{subequations}
where $\Psi^{\prime}_{i}=\Psi_{|\psi|^{2}}^{\prime}(P_{i}^{2}+Q_{i}^{2},x,t_{n}+\sum_{j=1}^{s}a_{ij}^{(0)}\Delta t)$ for $i=1,\cdots,s$ and $n=0,\cdots,N-1$, $\big(a_{ij}^{(0)}\big)_{s\times s}$ and $\big(a_{ij}^{(1)}\big)_{s\times s}$
are $s\times s$ matrices of real elements while $\big(b_{1}^{(0)},\cdots,b_{s}^{(0)}\big)$ and $\big(b_{1}^{(1)},\cdots,b_{s}^{(1)}\big)$ are real vectors.

Under the assumption of existence of solution of \eqref{RK method}, we could obtain the following condition for \eqref{RK method} to preserve the discrete symplectic structure. We postpone the existence of solution to Proposition \ref{pro4}.
\begin{theorem}
  Methods (\ref{RK method}) preserve the discrete symplectic structure, i.e.,
  \[\bar{\omega}^{n+1}=\int_{-L}^{L}dP^{n+1}\wedge dQ^{n+1}\; dx=\int_{-L}^{L}dP^{n}\wedge dQ^{n}\; dx=\bar{\omega}^{n},\]
  if coefficients satisfy the following conditions: $\forall \; i,j=1,\cdots,s,$
  \begin{align}\label{condition}
      &b_{i}^{(0)}b_{j}^{(0)}=b_{i}^{(0)}a_{ij}^{(0)}+b_{j}^{(0)}a_{ji}^{(0)},
     &b_{i}^{(0)}b_{j}^{(1)}=b_{i}^{(0)}a_{ij}^{(1)}+b_{j}^{(1)}a_{ji}^{(0)},
     & b_{i}^{(1)}b_{j}^{(1)}=b_{i}^{(1)}a_{ij}^{(1)}+b_{j}^{(1)}a_{ji}^{(1)}.
  \end{align}
\end{theorem}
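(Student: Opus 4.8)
The idea is to reproduce, at the discrete level, the continuous-time computation of Section~2, following the classical Bochev--Scovel/Sanz-Serna argument for symplectic Runge--Kutta methods but carrying along the two coefficient families $(a^{(0)},b^{(0)})$ and $(a^{(1)},b^{(1)})$.

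\emph{Set-up.} Regard the internal stages $P_i,Q_i$ and the step values $P^n,Q^n,P^{n+1},Q^{n+1}$ as functions of the initial data $p=\Re\varphi$, $q=\Im\varphi$ (differentiable in $(p,q)$ and with the homogeneous Dirichlet behaviour in $x$, as provided by the solvability in Proposition~\ref{pro4}), and apply the exterior derivative $d$ with respect to $(p,q)$ to each line of \eqref{RK method}. With $H_1(P,Q)=\tfrac12\Psi(P^2+Q^2,x,t)$ as in Section~2, so that $\Psi_i'P_i=\partial H_1/\partial P|_{(P_i,Q_i)}$ and $\Psi_i'Q_i=\partial H_1/\partial Q|_{(P_i,Q_i)}$, the force differentials read
\[ d(AP_i+\Psi_i'P_i)=A\,dP_i+\tfrac{\partial^2H_1}{\partial P^2}dP_i+\tfrac{\partial^2H_1}{\partial P\partial Q}dQ_i,\qquad d(AQ_i+\Psi_i'Q_i)=A\,dQ_i+\tfrac{\partial^2H_1}{\partial P\partial Q}dP_i+\tfrac{\partial^2H_1}{\partial Q^2}dQ_i, \]
the Hessian being evaluated at $(P_i,Q_i)$.

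\emph{Reduction.} Substitute the differentials of \eqref{RK method_3}--\eqref{RK method_4} into $dP^{n+1}\wedge dQ^{n+1}$, and in the first-order terms still containing $dP^n$ or $dQ^n$ eliminate these using the differentiated stage equations \eqref{RK method_1}--\eqref{RK method_2}, e.g. $dP^n=dP_i+\Delta t\sum_j a_{ij}^{(0)}d(AQ_j+\Psi_j'Q_j)-\varepsilon\Delta W_n\sum_j a_{ij}^{(1)}dQ_j$ and analogously for $dQ^n$. This turns $dP^{n+1}\wedge dQ^{n+1}-dP^n\wedge dQ^n$ into a sum of wedge products of stage differentials, homogeneous of orders $\Delta t$, $\Delta W_n$, $\Delta t^2$, $\Delta t\,\Delta W_n$ and $\Delta W_n^2$; I then integrate over $[-L,L]$ and treat each homogeneous piece separately. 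The $\Delta W_n$-piece is pointwise zero, being a combination of $dP_i\wedge dP_i$ and $dQ_i\wedge dQ_i$. The $\Delta t$-piece equals $\Delta t\sum_i b_i^{(0)}\int_{-L}^{L}\bigl(dP_i\wedge A\,dP_i-A\,dQ_i\wedge dQ_i\bigr)dx$, the mixed Hessian contributions having cancelled by symmetry of $\partial^2H_1/\partial P\partial Q$, and this is $\Delta t\sum_i b_i^{(0)}\int_{-L}^{L}\partial_x\bigl(dP_i\wedge(dP_i)_x-(dQ_i)_x\wedge dQ_i\bigr)dx=0$ by the vanishing trace of the stages, exactly as in Section~2. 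Finally, after relabelling summation indices $i\leftrightarrow j$ and using antisymmetry of $\wedge$, the $\Delta t^2$-, $\Delta t\,\Delta W_n$- and $\Delta W_n^2$-pieces become, respectively, sums over $i,j$ of $\bigl(b_i^{(0)}b_j^{(0)}-b_i^{(0)}a_{ij}^{(0)}-b_j^{(0)}a_{ji}^{(0)}\bigr)$, $\bigl(b_i^{(0)}b_j^{(1)}-b_i^{(0)}a_{ij}^{(1)}-b_j^{(1)}a_{ji}^{(0)}\bigr)$ and $\bigl(b_i^{(1)}b_j^{(1)}-b_i^{(1)}a_{ij}^{(1)}-b_j^{(1)}a_{ji}^{(1)}\bigr)$ times wedge products of stage differentials, and these coefficients vanish by \eqref{condition}. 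Hence $\bar\omega^{n+1}=\bar\omega^n$.

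\emph{Main obstacle.} The difficulty is entirely in the bookkeeping of the reduction step: the substitutions produce on the order of a dozen bilinear terms, each a wedge of two stage differentials carrying an index pair, and the signs must be propagated correctly through $dP_i\wedge dQ_j=-\,dQ_j\wedge dP_i$ and through the index relabelling before the collected coefficients can be matched with the three relations in \eqref{condition}. A secondary point is to confirm that the exterior-derivative and integration-by-parts manipulations are legitimate, i.e. that the stages depend differentiably on the initial data and are regular enough in $x$ with zero boundary values; this is precisely what the existence assertion of Proposition~\ref{pro4} supplies.
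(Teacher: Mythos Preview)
Your proposal is correct and follows essentially the same route as the paper's proof: both expand $dP^{n+1}\wedge dQ^{n+1}-dP^n\wedge dQ^n$ via \eqref{RK method_3}--\eqref{RK method_4}, eliminate $dP^n,dQ^n$ through the differentiated stage equations \eqref{RK method_1}--\eqref{RK method_2}, collect the resulting bilinear terms so that the coefficients of the $\Delta t^2$-, $\Delta t\,\Delta W_n$- and $\Delta W_n^2$-pieces match the three relations in \eqref{condition}, and then dispose of the surviving $\Delta t$-piece as a boundary term via $A=\partial_{xx}$ and the zero trace. The only cosmetic difference is that the paper packages the force differentials with the shorthand $f_j,\tilde f_j,g_j,\tilde g_j$ and displays the intermediate expansion explicitly, whereas you organize by homogeneity in $(\Delta t,\Delta W_n)$ and invoke the Hessian symmetry of $H_1$ directly to cancel the nonlinear contributions in the $\Delta t$-piece.
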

\begin{proof}
From \eqref{RK method_3} and \eqref{RK method_4}, we have
\begin{align}\label{num sym}
    &dP^{n+1}\wedge dQ^{n+1}-dP^{n}\wedge dQ^{n}\\
    &=-\Delta W_{n}^{2}\sum_{i,j=1}^{s}b_{i}^{(1)}b_{j}^{(1)}d\tilde{f}_{i}\wedge\tilde{g}_{j}-\Delta W_{n}\sum_{i=1}^{s}b_{i}^{(1)}\Big(dP^{n}\wedge d\tilde{g}_{i}-d\tilde{f}_{i}\wedge dQ^{n}\Big)\nonumber\\
     &\quad +\Delta t\sum_{i=1}^{s}b_{i}^{(0)}\Big(dP^{n}\wedge d(AP_{i}+g_{i})-d(AQ_{i}+f_{i})\wedge dQ^{n}\Big)\nonumber\\
    &\quad+\Delta t\Delta W_{n}\sum_{i,j=1}^{s}b_{i}^{(0)}b_{j}^{(1)}\Big(d(AQ_{i}+f_{i})\wedge d\tilde{g}_{j}+d\tilde{f}_{j}\wedge d(AP_{i}+g_{i})\Big)\nonumber\\
    &\quad -\Delta t^{2}\sum_{i,j=1}^{s}b_{i}^{(0)}b_{j}^{(0)}\Big(d(AQ_{i})\wedge d(AP_{j}+g_{j})+df_{i}\wedge d(AP_{j}+g_{j})\Big).\nonumber
\end{align}
where
$f_{j}=\Psi^{\prime}_{j}Q_{j}, \; \tilde{f}_{j}=\varepsilon Q_{j}, \; g_{j}=\Psi^{\prime}_{j}P_{j}, \; \tilde{g}_{j}=\varepsilon P_{j}.$
And from \eqref{RK method_1} and \eqref{RK method_2},
\begin{align}\label{med}
    &dP^{n}=dP_{i}+\Delta t\sum_{j=1}^{s}a_{ij}^{(0)}d(AQ_{j})+\Delta t\sum_{j=1}^{s}a_{ij}^{(0)}df_{j}-\Delta W_{n}\sum_{j=1}^{s}a_{ij}^{(1)}d\tilde{f}_{j},\\
    &dQ^{n}=dQ_{i}-\Delta t\sum_{j=1}^{s}a_{ij}^{(0)}d(AP_{j})-\Delta t\sum_{j=1}^{s}a_{ij}^{(0)}dg_{j}+\Delta W_{n}\sum_{j=1}^{s}a_{ij}^{(1)}d\tilde{g}_{j}.\nonumber
\end{align}
Substituting (\ref{med}) into (\ref{num sym}), we obtain
\begin{align*}
    &dP^{n+1}\wedge dQ^{n+1}-dP^{n}\wedge dQ^{n}
   =\Delta W_{n}\sum_{i=1}^{s}b_{i}^{(1)}\Big(d\tilde{f}_{i}\wedge dQ_{i}-dP_{i}\wedge d\tilde{g}_{i}\Big)\nonumber\\
   &\quad+\Delta t\sum_{i=1}^{s}b_{i}^{(0)}\Big(dP_{i}\wedge d(AP_{i})+dP_{i}\wedge dg_{i}-d(AQ_{i})\wedge dQ_{i}-df_{i}\wedge dQ_{i}\Big)\\
   &\quad +\Delta W_{n}^{2}\sum_{i,j=1}^{s}(b_{i}^{(1)}a_{ij}^{(1)}+b_{j}^{(1)}a_{ji}^{(1)}-b_{i}^{(1)}b_{j}^{(1)})d\tilde{f}_{i}\wedge\tilde{g}_{j}\\
   &\quad+\Delta t\Delta W_{n}\sum_{i,j=1}^{s}(b_{i}^{(0)}b_{j}^{(1)}-b_{i}^{(0)}a_{ij}^{(1)}-b_{j}^{(1)}a_{ji}^{(0)})\\
   &\quad\quad\quad\quad\Big(d(AQ_{i})\wedge d\tilde{g}_{j}+d\tilde{f}_{i}\wedge d\tilde{g}_{j}+d\tilde{f}_{j}\wedge d(AP_{i})+d\tilde{f}_{j}\wedge dg_{i}\Big)\\
    &\quad +\Delta t^{2}\sum_{i,j=1}^{s}(b_{i}^{(0)}a_{ij}^{(0)}+b_{j}^{(0)}a_{ji}^{(0)}-b_{i}^{(0)}b_{j}^{(0)})\\
   &\quad\quad\quad\quad \Big(d(AQ_{i}\wedge d(AP_{j})+d(AQ_{i})\wedge d g_{j}+df_{i}\wedge d(AP_{j})+df_{i}\wedge dg_{j})\Big).
\end{align*}
Recalling the conditions (\ref{condition}) and the expressions of functions $f_{i},\; \tilde{f}_{i},\; g_{i},\; \tilde{g}_{i}$, we have
\begin{align*}
  &  \int_{-L}^{L}dP^{n+1}\wedge dQ^{n+1} \; dx=\int_{-L}^{L}dP^{n}\wedge dQ^{n}\; dx\\
   &+\Delta t\sum_{i=1}^{s}b_{i}^{(0)}\int_{-L}^{L}\frac{\partial}{\partial x}(dP_{i}\wedge d(P_{i})_{x}-d(Q_{i})_{x}\wedge dQ_{i})\; dx
    =\int_{-L}^{L}dP^{n}\wedge dQ^{n}\; dx.\nonumber
\end{align*}
Thus the proof of the theorem is completed.
\end{proof}

The solution of stochastic symplectic Runge-Kutta methods (\ref{RK method}) exists in space $U_c={\mathbb L}^2([-L,L],{\mathbb C})$, and it preserves the discrete charge conservation law.
\begin{proposition}\label{pro4}
There exists a $U_c$-valued solution of
the symplectic methods (\ref{RK method})-(\ref{condition}). Moreover, it possesses the discrete charge conservation law, i.e.,
\[\int_{-L}^{L}|\phi^{n+1}|^{2}dx=\int_{-L}^{L}|\phi^{n}|^{2}dx,\quad \forall n=0,1,\cdots, N.\]
\end{proposition}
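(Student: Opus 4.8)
The plan is to prove the two assertions separately: first the discrete charge conservation law, assuming a $U_c$-valued solution of \eqref{RK method}--\eqref{condition} exists, and then that existence, which I regard as the harder half. For the charge conservation the point is that $\mathcal{E}_{1}(\phi)=\int_{-L}^{L}|\phi|^{2}dx=\|\phi\|_{U_c}^{2}$ is a quadratic invariant of the continuous equation and that the conditions \eqref{condition} are precisely those under which a Runge--Kutta method preserves quadratic invariants --- the same algebraic mechanism that produced the symplecticity theorem above. I would first recast \eqref{RK method} in complex form: writing $\phi^{n}=P^{n}+\mathbf{i}Q^{n}$, $\phi_{i}=P_{i}+\mathbf{i}Q_{i}$, and setting $K_{i}=\mathbf{i}(A\phi_{i}+\Psi'_{i}\phi_{i})$, $L_{i}=-\mathbf{i}\varepsilon\phi_{i}$, the scheme reads
\[\phi_{i}=\phi^{n}+\Delta t\sum_{j}a_{ij}^{(0)}K_{j}+\Delta W_{n}\sum_{j}a_{ij}^{(1)}L_{j},\qquad \phi^{n+1}=\phi^{n}+\Delta t\sum_{i}b_{i}^{(0)}K_{i}+\Delta W_{n}\sum_{i}b_{i}^{(1)}L_{i}.\]

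Integrating by parts with the homogeneous boundary condition gives $\langle\phi_{i},A\phi_{i}\rangle=-\|\partial_{x}\phi_{i}\|^{2}\in\mathbb{R}$, and since $\Psi'_{i}$ and $\varepsilon$ are real this yields $\Re\langle\phi_{i},K_{i}\rangle=0$ and $\Re\langle\phi_{i},L_{i}\rangle=0$. Then I would expand
\[\|\phi^{n+1}\|^{2}-\|\phi^{n}\|^{2}=2\Re\langle\phi^{n},\phi^{n+1}-\phi^{n}\rangle+\|\phi^{n+1}-\phi^{n}\|^{2},\]
substitute $\phi^{n}=\phi_{i}-\Delta t\sum_{j}a_{ij}^{(0)}K_{j}-\Delta W_{n}\sum_{j}a_{ij}^{(1)}L_{j}$ into the $i$-th summand of the cross term, and use the two identities above. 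After relabelling indices and using the symmetry of $\Re\langle\cdot,\cdot\rangle$, the difference collapses to
\begin{align*}
\|\phi^{n+1}\|^{2}-\|\phi^{n}\|^{2}
&=\Delta t^{2}\sum_{i,j}(b_{i}^{(0)}b_{j}^{(0)}-b_{i}^{(0)}a_{ij}^{(0)}-b_{j}^{(0)}a_{ji}^{(0)})\Re\langle K_{i},K_{j}\rangle\\
&\quad+2\Delta t\Delta W_{n}\sum_{i,j}(b_{i}^{(0)}b_{j}^{(1)}-b_{i}^{(0)}a_{ij}^{(1)}-b_{j}^{(1)}a_{ji}^{(0)})\Re\langle K_{i},L_{j}\rangle\\
&\quad+\Delta W_{n}^{2}\sum_{i,j}(b_{i}^{(1)}b_{j}^{(1)}-b_{i}^{(1)}a_{ij}^{(1)}-b_{j}^{(1)}a_{ji}^{(1)})\Re\langle L_{i},L_{j}\rangle,
\end{align*}
and every bracket vanishes by \eqref{condition}; hence $\|\phi^{n+1}\|=\|\phi^{n}\|$, and iterating over $n$ gives $\int_{-L}^{L}|\phi^{n+1}|^{2}dx=\int_{-L}^{L}|\phi^{n}|^{2}dx$. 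Structurally this is the same computation as in the symplecticity proof, with $dP\wedge dQ$ replaced by $|\phi|^{2}$.

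For existence I would treat the complex stage system as a fixed-point problem for $\Phi=(\phi_{1},\dots,\phi_{s})\in U_c^{s}$: moving the $A$-term to the left it reads $(I-\mathbf{i}\Delta t\,A^{(0)}\!\otimes A)\Phi=\mathbf{1}\otimes\phi^{n}+\mathbf{i}\Delta t(A^{(0)}\!\otimes I)\mathcal{N}(\Phi)-\mathbf{i}\varepsilon\Delta W_{n}(A^{(1)}\!\otimes M_{\Delta W_{n}})\Phi$, with $A^{(0)}=(a_{ij}^{(0)})$, $A^{(1)}=(a_{ij}^{(1)})$, $\mathcal{N}(\Phi)_{i}=\Psi'_{i}\phi_{i}$, and $M_{\Delta W_{n}}$ multiplication by $\Delta W_{n}$. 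Since $A$ is self-adjoint and nonpositive, $I-\mathbf{i}\Delta t\,A^{(0)}\!\otimes A$ is boundedly invertible on $U_c^{s}$ (immediate for the midpoint rule, where $A^{(0)}=(1/2)$; in general by diagonalising on the eigenspaces of $A$), so the system takes the form $\Phi=\mathcal{T}(\Phi)$. Under the assumptions on $\mathbf{Q}$ that render $\Delta W_{n}$ almost surely bounded and on $\Psi$ that make $\mathcal{N}$ well-defined and Lipschitz on bounded sets of the relevant space, $\mathcal{T}$ is a contraction on a suitable ball once $\Delta t$ is small, yielding a unique $U_c$-valued solution; the discrete charge law just established then provides the a priori bound (for the midpoint rule $\phi_{1}=(\phi^{n}+\phi^{n+1})/2$, so $\|\phi_{1}\|\le\|\phi^{n}\|$), which can be fed into a continuation or Galerkin--Brouwer (Leray--Schauder) argument to remove the smallness restriction on $\Delta t$.

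I expect the existence part to be the main obstacle: the unboundedness of $A$ precludes a naive Picard iteration on \eqref{RK method}; the multiplicative-noise term is controllable only under extra regularity of $\mathbf{Q}$, so that $\Delta W_{n}$ lies in $L^{\infty}$ rather than merely $L^{2}$; and $\Psi'_{|\psi|^{2}}$ is in general not globally Lipschitz on $U_c$, forcing one either to accept a smallness condition on $\Delta t$ or to push the charge a priori bound through a compactness argument. By contrast, once the complex reformulation and the identities $\Re\langle\phi_{i},K_{i}\rangle=\Re\langle\phi_{i},L_{i}\rangle=0$ are available, the charge conservation is purely algebraic.
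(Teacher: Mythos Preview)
Your charge–conservation argument is essentially the paper's: the paper also passes to the complex variables, writes the update as $\phi^{n+1}=\phi^{n}+\mathbf{i}\Delta t\sum_i b_i^{(0)}h_i-\mathbf{i}\Delta W_n\sum_i b_i^{(1)}\tilde h_i$ with $h_i=A\phi_i+\Psi'_i\phi_i$, $\tilde h_i=\varepsilon\phi_i$, multiplies by $\bar\phi^{n+1}+\bar\phi^{n}$ and takes real parts (equivalently, your expansion of $\|\phi^{n+1}\|^2-\|\phi^n\|^2$), substitutes $\phi^{n}=\phi_i-\cdots$ from the stage equations, and arrives at the same three sums killed by \eqref{condition}. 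The only cosmetic difference is that the paper keeps the Laplacian contribution as an extra term $\Im\{2\Delta t\sum_i b_i^{(0)}\!\int A\phi_i\bar\phi_i\,dx\}$ and disposes of it by integration by parts at the very end, whereas you fold it into $\Re\langle\phi_i,K_i\rangle=0$ at the outset.

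For existence the paper takes a different and shorter route than your primary plan. It never sets up a contraction; instead it defines $\phi^{n+\frac12}=\phi^n+\tfrac{\mathbf{i}}{2}\Delta t\sum_i b_i^{(0)}h_i-\tfrac{\mathbf{i}}{2}\Delta W_n\sum_i b_i^{(1)}\tilde h_i$, sets $\Pi(\phi^{n+\frac12})$ to be the left-hand side minus the right-hand side of that relation, and observes---by exactly the same algebra just used for charge conservation---that $\Re\langle\Pi(\phi^{n+\frac12}),\phi^{n+\frac12}\rangle_{U_c}\ge\|\phi^{n+\frac12}\|\bigl(\|\phi^{n+\frac12}\|-\|\phi^n\|\bigr)$. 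Hence $\Pi$ points outward on the sphere of radius $\|\phi^n\|+1$, and Brouwer's fixed-point theorem (made rigorous via Galerkin projections, as in \cite{Akrivis,Bouard2004}) yields a zero of $\Pi$ in $U_c$ with no smallness condition on $\Delta t$ and no $L^\infty$ hypothesis on $\Delta W_n$. Your contraction approach would work under the additional assumptions you list, and you correctly flag Galerkin--Brouwer as the way to lift them; the paper simply starts there, the point being that the very structure \eqref{condition} that gives charge preservation also gives the coercivity estimate for free.
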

 \begin{proof}
 Firstly, we show the preservation of charge conservation law.
Recall that $\phi^{n}=P^{n}+\mathbf{i}Q^{n}$ and let $\phi_{i}=P_{i}+\mathbf{i}Q_{i}$, then methods (\ref{RK method}) can be
rewritten as
\begin{subequations}
\begin{align}
&\phi_{i}=\phi^{n}+\mathbf{i}\Delta t\sum_{j=1}^{s}a_{ij}^{(0)}h_{j}-\mathbf{i}\Delta W_{n}\sum_{j=1}^{s}a_{ij}^{(1)}\tilde{h}_{j},\label{rk phi i}\\
&\phi^{n+1}=\phi^{n}+\mathbf{i}\Delta t\sum_{i=1}^{s}b_{i}^{(0)}h_{i}-\mathbf{i}\Delta W_{n}\sum_{i=1}^{s}b_{i}^{(1)}\tilde{h}_{i},\label{rk phi n+1}
\end{align}
\end{subequations}
with $h_{i}=A\phi_{i}+\Psi_{i}^{\prime}\phi_{i}$ and $\tilde{h}_{i}=\varepsilon \phi_{i}$.

Multiplying equation (\ref{rk phi n+1}) with $\bar{\phi}^{n+1}+\bar{\phi}^{n}$ and taking real part, we get
\begin{align}\label{midl}
|\phi^{n+1}|^{2}=|\phi^{n}|^{2}
+\Re \Big\{\mathbf{i}\Delta t\sum_{i=1}^{s}b_{i}^{(0)}h_{i}(\bar{\phi}^{n+1}+\bar{\phi}^{n})
-\mathbf{i}\Delta W_{n}\sum_{i=1}^{s}b_{i}^{(1)}\tilde{h}_{i}(\bar{\phi}^{n+1}+\bar{\phi}^{n})\Big\}.
\end{align}
From equation (\ref{rk phi i}), we have the expression of $\phi^{n}$,
\begin{equation*}
\phi^{n}=\phi_{i}-\mathbf{i}\Delta t\sum_{j=1}^{s}a_{i j}^{(0)}h_{j}+\mathbf{i}\Delta W_{n}\sum_{j=1}^{s}a_{ij}^{(1)}\tilde{h}_{j}.
\end{equation*}
Combining this together with equation (\ref{rk phi n+1}) leads to
\begin{align*}
\bar{\phi}^{n+1}+\bar{\phi}^{n}&=2\bar{\phi}_{i}+\mathbf{i}2\Delta t\sum_{j=1}^{s}a_{i j}^{(0)}\bar{h}_{j}-\mathbf{i} 2\Delta W_{n}
\sum_{j=1}^{s}a_{i j}^{(1)}\bar{\tilde{h}}_{j}
-\mathbf{i}\Delta t\sum_{j=1}^{s}b_{j}^{(0)}\bar{h}_{j}+\mathbf{i}\Delta W_{n}\sum_{j=1}^{s}b_{j}^{(1)}\bar{\tilde{h}}_{j}.
\end{align*}
Therefore the second term of the right hand side of equation (\ref{midl}) becomes
\begin{align*}
&\Re \Big\{\mathbf{i}\Delta t\sum_{i=1}^{s}b_{i}^{(0)}h_{i}(\bar{\phi}^{n+1}+\bar{\phi}^{n})
-\mathbf{i}\Delta W_{n}\sum_{i=1}^{s}b_{i}^{(1)}\tilde{h}_{i}(\bar{\phi}^{n+1}+\bar{\phi}^{n})\Big\}\\
&=\Im \Big\{2\Delta t\sum_{i=1}^{s}b_{i}^{(0)}A\phi_{i}\bar{\phi}_{i}\Big\}
+\Delta t^{2}\sum_{i j=1}^{s}\Big(b_{i}^{(0)}b_{j}^{(0)}-b_{i}^{(0)}a_{\iota j}^{(0)}-b_{j}^{(0)}a_{ji}^{(0)}\Big)\Re(h_{i}\bar{h}_{j})\\
&\quad-2\Delta t\Delta W_{n}\sum_{ij=1}^{s}\Big(b_{i}^{(0)}b_{j}^{(1)}-b_{i}^{(0)}a_{\iota j}^{(1)}-b_{j}^{(1)}a_{ji}^{(0)}\Big)\Re(h_{i}\bar{\tilde{h}}_{j})\\
&\quad+\Delta W_{n}^{2}\sum_{ij=1}^{s}\Big(b_{i}^{(1)}b_{j}^{(1)}-b_{i}^{(1)}a_{ij}^{(1)}-b_{j}^{(1)}a_{ji}^{(1)}\Big)\Re(\tilde{h}_{i}\bar{\tilde{h}}_{j}),
\end{align*}
since $\Re \phi=\Re \bar{\phi}$.
Recalling the symplectic condition (\ref{condition}), we have
\begin{equation*}
  |\phi^{n+1}|^{2}=|\phi^{n}|^{2}+\Im \Big\{2\Delta t\sum_{i=1}^{s}b_{i}^{(0)}A\phi_{i}\bar{\phi}_{i}\Big\}.
\end{equation*}
Integrating the above equation from $-L$ to $L$ with respect to $x$ leads to the preservation of charge conservation law.

Secondly, the existence of solution follows from a standard Galerkin method and Brouwer's theorem; see \cite[Lemma 3.1]{Akrivis} and \cite[Lemma 3.1]{Bouard2004}.
From \eqref{rk phi n+1}, we have that
$$
\phi^{n+\frac12}=\phi^n+\frac{{\mathbf i}}{2}\Delta t\sum_{i=1}^{s}b_{i}^{(0)}h_{i}-\frac{\mathbf{i}}{2}\Delta W_{n}\sum_{i=1}^{s}b_{i}^{(1)}\tilde{h}_{i}.
$$
Thus define the mapping $\Pi:U_c\rightarrow U_c$ by
$$
\Pi(\phi^{n+\frac12})=\phi^{n+\frac12}-\phi^n-\frac{{\mathbf i}}{2}\Delta t\sum_{i=1}^{s}b_{i}^{(0)}h_{i}+\frac{\mathbf{i}}{2}\Delta W_{n}\sum_{i=1}^{s}b_{i}^{(1)}\tilde{h}_{i}.
$$
Similar proof leads to
$$
\Re\langle\Pi(\phi^{n+\frac12}),\phi^{n+\frac12}\rangle_{U_c}=\|\phi^{n+\frac12}\|_{U_c}^2-\Re\langle\phi^n,\phi^{n+\frac12}\rangle_{U_c}\geq
\|\phi^{n+\frac12}\|_{U_c}\big(\|\phi^{n+\frac12}\|_{U_c}-\|\phi^n\|_{U_c}\big).
$$
Hence for every $\phi^{n+\frac12}\in U_c$ such that $\|\phi^{n+\frac12}\|_{U_c}=\|\phi^n\|_{U_c}+1$, there holds $\Re\langle\Pi(\phi^{n+\frac12}),\phi^{n+\frac12}\rangle_{U_c}> 0$. The existence of solution follows from  a standard Galerkin method (make the above argument rigorous) and the Brouwer's theorem.
\end{proof}

\subsection{Midpoint scheme}
In the sequel, we consider a special case of symplectic Runge-Kutta methods. Let $s=1$ and take coefficients as
$a_{11}^{(0)}=a_{11}^{(1)}=\frac12 \text{ and } b_1^{(0)}=b_1^{(1)}=1.$
Obviously, coefficients satisfy symplectic conditions (\ref{condition}), and we obtain the midpoint scheme:
\begin{eqnarray}\label{midpoint scheme}
    P^{n+1}=P^{n}-\Delta t A Q^{n+\frac12}-\Delta t(\Psi^{\prime})^{n+\frac12}Q^{n+\frac12}+\varepsilon Q^{n+\frac12}\Delta W_{n},\\
    Q^{n+1}=Q^{n}+\Delta t A P^{n+\frac12}+\Delta t(\Psi^{\prime})^{n+\frac12}P^{n+\frac12}-\varepsilon P^{n+\frac12}\Delta W_{n}.\nonumber
\end{eqnarray}
Here $P^{n+\frac12}=\frac12(P^{n}+P^{n+1})$ and $Q^{n+\frac12}=\frac12(Q^{n}+Q^{n+1})$.
Denote the approximation of $\psi(t_{n})$ by $\phi^{n}$, and recall that $\phi^{n}=P^{n}+\mathbf{i}Q^{n}$ and $\phi^{n+\frac12}=\frac12(\phi^{n}+\phi^{n+1})$, we have
\begin{equation}\label{scheme2}
  \phi^{n+1}=\phi^{n}+\mathbf{i}\Delta tA \phi^{n+\frac12}+\mathbf{i}\Delta t F(t_{n+\frac12},\phi^{n+\frac12})-\mathbf{i}\varepsilon\phi^{n+\frac12}\Delta W_{n}.
\end{equation}
Of course, (\ref{scheme2}) is formal and has to be understood in the following sense
\begin{equation}\label{symp scheme}
  \phi^{n+1}=\hat{S}_{\Delta t}\phi^{n}+\mathbf{i}\Delta t T_{\Delta t}F(t_{n+\frac12},\phi^{n+\frac{1}{2}})-\mathbf{i}\varepsilon T_{\Delta t}\phi^{n+\frac{1}{2}}\Delta W_{n},
\end{equation}
where $F(t_{n+\frac12},\phi^{n+\frac{1}{2}}):=\Psi_{|\psi|^{2}}^{\prime}(|\phi^{n+\frac{1}{2}}|^{2},x,t_{n+\frac{1}{2}})\phi^{n+\frac{1}{2}},$ the operators are
\begin{equation}\label{hats}\hat{S}_{\Delta t}=(Id-\frac{\mathbf{i}\Delta t}{2}A)^{-1}(Id+\frac{\mathbf{i}\Delta t}{2}A) \text{ and }
T_{\Delta t}=(Id-\frac{\mathbf{i}\Delta t}{2}A)^{-1}.\end{equation}
Since midpoint scheme is a special case of symplectic Runge-Kutta methods, it also preserves the discrete charge conservation law.
\begin{corollary}\label{propo1}
For midpoint scheme (\ref{midpoint scheme}), the discrete charge conservation law is preserved, i.e., $P$-a.s.,
\begin{equation*}
  \int_{-L}^{L}|\phi^{n+1}|^{2}dx=\int_{-L}^{L}|\phi^{n}|^{2}dx,\quad \forall n=0,1,\cdots, N.
\end{equation*}
\end{corollary}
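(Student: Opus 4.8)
The statement is an immediate specialization of Proposition \ref{pro4}. The midpoint scheme \eqref{midpoint scheme} is precisely the one-stage member of the family \eqref{RK method} with $a_{11}^{(0)}=a_{11}^{(1)}=\frac12$ and $b_1^{(0)}=b_1^{(1)}=1$, and for these coefficients all three identities in \eqref{condition} collapse to the single true equality $1\cdot 1=1\cdot\frac12+1\cdot\frac12$. Hence the hypotheses of Proposition \ref{pro4} are satisfied, and both the $U_c$-valued solvability (already used implicitly in writing \eqref{symp scheme}) and the discrete charge conservation law follow at once. The proof I would write is essentially these two lines, invoking Proposition \ref{pro4}.

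If a self-contained verification is wanted, I would argue directly from \eqref{scheme2}. Using $\phi^{n+1}+\phi^n=2\phi^{n+\frac12}$, pair the increment $\phi^{n+1}-\phi^n$ with $\overline{\phi^{n+1}+\phi^n}=2\bar\phi^{n+\frac12}$ and take the real part; since $\phi^{n+1}\bar\phi^n-\phi^n\bar\phi^{n+1}$ is purely imaginary, this yields $|\phi^{n+1}|^2-|\phi^n|^2=\Re\big[2\bar\phi^{n+\frac12}(\phi^{n+1}-\phi^n)\big]$ pointwise in $x$. Substituting \eqref{scheme2} for $\phi^{n+1}-\phi^n$, the nonlinear contribution is $\Re\big(2\mathbf{i}\Delta t\,\Psi_{|\psi|^2}^{\prime}|\phi^{n+\frac12}|^2\big)=0$ because $\Psi_{|\psi|^2}^{\prime}$ is real, and the noise contribution is $\Re\big(-2\mathbf{i}\varepsilon\,|\phi^{n+\frac12}|^2\Delta W_n\big)=0$ for the same reason; both cancellations are pointwise. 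The Laplacian term $\Re\big(2\mathbf{i}\Delta t\,\bar\phi^{n+\frac12}A\phi^{n+\frac12}\big)=-2\Delta t\,\Im\big(\bar\phi^{n+\frac12}A\phi^{n+\frac12}\big)$ need not vanish pointwise, so one integrates over $[-L,L]$ and uses that $A=\partial_{xx}$ with homogeneous Dirichlet data is self-adjoint: $\int_{-L}^L\bar\phi^{n+\frac12}A\phi^{n+\frac12}\,dx=-\int_{-L}^L|\partial_x\phi^{n+\frac12}|^2\,dx\in\mathbb{R}$, so its imaginary part is zero. Combining gives $\int_{-L}^L|\phi^{n+1}|^2dx=\int_{-L}^L|\phi^n|^2dx$, and induction on $n$ finishes it.

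There is no genuine obstacle here; the corollary really is a corollary. The only two points deserving care are the same ones that appear in Proposition \ref{pro4}: first, the Laplacian contribution cancels only after spatial integration, so the argument truly uses the Dirichlet boundary condition and the self-adjointness of $A$, not merely the algebraic symplecticity identities; and second, one must know that \eqref{scheme2}/\eqref{symp scheme} is well posed, i.e. $\mathrm{Id}-\frac{\mathbf{i}\Delta t}{2}A$ is boundedly invertible so that $\hat S_{\Delta t}$ and $T_{\Delta t}$ are well-defined bounded operators on $U_c$ and $\phi^{n+1}$ (equivalently $\phi^{n+\frac12}$) exists — but this is already supplied by Proposition \ref{pro4}. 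Accordingly I would state the corollary's proof in one or two lines by appealing to Proposition \ref{pro4}, optionally recording the direct computation above as a remark.
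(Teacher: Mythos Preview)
Your proposal is correct and matches the paper's approach: the paper states the corollary without separate proof, noting just before it that the midpoint scheme is a special case of the symplectic Runge--Kutta methods and therefore inherits the discrete charge conservation from Proposition~\ref{pro4}. Your optional direct verification is simply the $s=1$ specialization of the computation in the proof of Proposition~\ref{pro4}, so nothing further is needed.
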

The following propositions are discrete versions of the evolutionary relationship of average energy for midpoint scheme. For the general Runge-Kutta methods, it is an open problem.
\begin{proposition}\label{propo2}
  If $\Psi_{|\psi|^{2}}^{\prime}(|\psi|^{2},x,t)= V(x)$, then
  \begin{equation*}
\mathbf{E}(  \mathcal{E}_{2}(\phi^{n+1}))+\frac{\varepsilon}{\Delta t}  \int_{-L}^{L}\mathbf{E}(|\phi^{n+1}|^{2}\Delta W_{n})dx
  =\mathbf{E}(\mathcal{E}_{2}(\phi^{n})),
  \end{equation*}
  where
  $  \mathcal{E}_{2}(\phi^{n})=\int_{-L}^{L}|\nabla \phi^{n}|^{2}dx-\int_{-L}^{L}V(x)|\phi^{n}|^{2}dx.$
\end{proposition}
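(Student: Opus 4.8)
The plan is to derive a pathwise identity for the increment $\mathcal{E}_2(\phi^{n+1})-\mathcal{E}_2(\phi^{n})$ by the standard midpoint energy argument, and then take expectations, exploiting that $\phi^{n}$ is $\mathcal{F}_{t_n}$-measurable while $\Delta W_n=W(t_{n+1})-W(t_n)$ is independent of $\mathcal{F}_{t_n}$ with $\mathbf{E}(\Delta W_n)=0$. First I would put the scheme into a usable algebraic form: although \eqref{scheme2} is only formal, applying $Id-\frac{\mathbf{i}\Delta t}{2}A$ to \eqref{symp scheme}, using \eqref{hats} and the hypothesis $F(t_{n+\frac12},\phi^{n+\frac12})=V\phi^{n+\frac12}$, shows that, as an identity in $U_c$,
\[\phi^{n+1}-\phi^{n}=\mathbf{i}\Delta t\,(A+V)\phi^{n+\frac12}-\mathbf{i}\varepsilon\,\phi^{n+\frac12}\,\Delta W_n,\]
with $\phi^{n},\phi^{n+1}$ lying in the domain of $A$, so that every spatial integration by parts below is boundary-free. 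Writing $\mathcal{E}_2(\phi)=-\int_{-L}^{L}\big((A+V)\phi\big)\overline{\phi}\,dx$ (obtained by integrating $\int_{-L}^{L}|\nabla\phi|^2\,dx$ by parts) and using the self-adjointness of $B:=A+V$ in the elementary identity $\int_{-L}^{L}(B\phi^{n+1})\overline{\phi^{n+1}}\,dx-\int_{-L}^{L}(B\phi^{n})\overline{\phi^{n}}\,dx=2\Re\int_{-L}^{L}(B\phi^{n+\frac12})\overline{(\phi^{n+1}-\phi^{n})}\,dx$, I obtain
\[\mathcal{E}_2(\phi^{n+1})-\mathcal{E}_2(\phi^{n})=-2\Re\int_{-L}^{L}\big((A+V)\phi^{n+\frac12}\big)\overline{(\phi^{n+1}-\phi^{n})}\,dx.\]
Substituting the rewritten scheme, the drift contribution $-2\Re\big(-\mathbf{i}\Delta t\int_{-L}^{L}|(A+V)\phi^{n+\frac12}|^2\,dx\big)$ vanishes, and since $\Delta W_n$ and $V|\phi^{n+\frac12}|^2$ are real, the noise contribution reduces to $2\varepsilon\int_{-L}^{L}\Im\big((A\phi^{n+\frac12})\overline{\phi^{n+\frac12}}\big)\Delta W_n\,dx$.

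The key step is then to recognise this integrand as $-\frac{1}{2\Delta t}$ times the local increment of the charge density. Multiplying the rewritten scheme by $\overline{\phi^{n+1}}+\overline{\phi^{n}}=2\overline{\phi^{n+\frac12}}$ and taking real parts gives, pointwise in $x$ and $P$-a.s.,
\[|\phi^{n+1}|^{2}-|\phi^{n}|^{2}=2\Re\big((\phi^{n+1}-\phi^{n})\overline{\phi^{n+\frac12}}\big)=-2\Delta t\,\Im\big((A\phi^{n+\frac12})\overline{\phi^{n+\frac12}}\big),\]
again because $V|\phi^{n+\frac12}|^2$ and $|\phi^{n+\frac12}|^2\Delta W_n$ are real; this is the same computation that underlies Corollary \ref{propo1}. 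Combining the two previous displays eliminates the midpoint term and yields the pathwise identity
\[\mathcal{E}_2(\phi^{n+1})-\mathcal{E}_2(\phi^{n})=-\frac{\varepsilon}{\Delta t}\int_{-L}^{L}\big(|\phi^{n+1}|^{2}-|\phi^{n}|^{2}\big)\Delta W_n\,dx\qquad P\text{-a.s.}\]
Finally, taking expectations and using the independence of $\Delta W_n$ from $\mathcal{F}_{t_n}$ together with the $\mathcal{F}_{t_n}$-measurability of $\phi^{n}$, which gives $\int_{-L}^{L}\mathbf{E}(|\phi^{n}|^{2}\Delta W_n)\,dx=0$, and rearranging, I recover the claimed relation.

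The calculation is mostly bookkeeping with real and imaginary parts; the one non-routine point, and the crux of the argument, is the observation that the stochastic term which would otherwise obstruct discrete energy conservation is exactly $-\frac{1}{2\Delta t}\int_{-L}^{L}(|\phi^{n+1}|^2-|\phi^{n}|^2)\Delta W_n\,dx$, so that it collapses into the displayed correction rather than remaining uncontrolled. One should also take care to integrate by parts in $x$ only on terms free of $\Delta W_n$ (legitimate because the iterates lie in the domain of $A$), and to assume the integrability needed for $\mathbf{E}(\mathcal{E}_2(\phi^{n+1}))$ and $\mathbf{E}(|\phi^{n+1}|^2\Delta W_n)$ to be finite, which follows from Corollary \ref{propo1} and standard moment bounds for the scheme.
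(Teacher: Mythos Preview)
Your proof is correct and reaches the same pathwise identity
\[
\mathcal{E}_2(\phi^{n+1})-\mathcal{E}_2(\phi^{n})=-\frac{\varepsilon}{\Delta t}\int_{-L}^{L}\bigl(|\phi^{n+1}|^{2}-|\phi^{n}|^{2}\bigr)\Delta W_n\,dx,
\]
after which the expectation step is identical to the paper's. The underlying idea is the same standard midpoint energy argument (test the scheme against $\bar\phi^{n+1}-\bar\phi^{n}$), but the organization differs slightly. The paper multiplies the full scheme $\frac{\mathbf{i}}{\Delta t}(\phi^{n+1}-\phi^n)+(A+V)\phi^{n+\frac12}=\frac{\varepsilon}{\Delta t}\phi^{n+\frac12}\Delta W_n$ by $\bar\phi^{n+1}-\bar\phi^{n}$ and reads off each term directly using the polarization identity $\Re\bigl[\phi^{n+\frac12}(\bar\phi^{n+1}-\bar\phi^{n})\bigr]=\tfrac12\bigl(|\phi^{n+1}|^2-|\phi^n|^2\bigr)$; in particular the noise term immediately becomes $\frac{\varepsilon}{2\Delta t}\int_{-L}^{L}(|\phi^{n+1}|^2-|\phi^n|^2)\Delta W_n\,dx$, with no reference to $A\phi^{n+\frac12}$. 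You instead substitute the scheme for $\phi^{n+1}-\phi^{n}$ inside the energy increment, which produces the intermediate term $2\varepsilon\int\Im\bigl((A\phi^{n+\frac12})\bar\phi^{n+\frac12}\bigr)\Delta W_n\,dx$, and then invoke a second use of the scheme (the pointwise charge computation) to convert it back. Your route is therefore slightly more circuitous but perfectly valid; the paper's route avoids the detour by keeping the noise on the right-hand side and polarizing it directly.
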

\begin{proof}
  Since scheme
 (\ref{symp scheme}) can be rewritten as
  \begin{equation*}
    \frac{\mathbf{i}}{\Delta t}(\phi^{n+1}-\phi^{n})+A \phi^{n+\frac{1}{2}}+\Psi_{|\psi|^{2}}^{\prime}(|\phi^{n+\frac{1}{2}}|^{2},x,t^{n+\frac{1}{2}})\phi^{n+\frac{1}{2}}
    =\frac{\varepsilon}{\Delta t}\phi^{n+\frac{1}{2}}\Delta W_{n},\nonumber
  \end{equation*}
  multiplying it with $\bar{\phi}^{n+1}-\bar{\phi}^{n}$, taking the real part and integrating with respect to $x$ in the space $[-L,L]$,
  the first term in the left hand side of the above equation becomes
  \begin{equation*}
    \Im\int_{-L}^{L}\frac{1}{\Delta t}(\phi^{n+1}-\phi^{n})(\bar{\phi}^{n+1}-\bar{\phi}^{n})dx
    =-\frac{1}{\Delta t}\Im\int_{-L}^{L}(\phi^{n+1}\bar{\phi}^{n}+\phi^{n}\bar{\phi}^{n+1})dx=0,\nonumber
  \end{equation*}
  the second term has the form
  \begin{align*}
      \Re\int_{-L}^{L}A \phi^{n+\frac{1}{2}}(\bar{\phi}^{n+1}-\bar{\phi}^{n})dx
     & =\frac{1}{2}\Re\int_{-L}^{L}\Big(-|\nabla \phi^{n+1}|^{2}+|\nabla\phi^{n}|^{2}+\nabla \phi^{n+1}\nabla\bar{\phi}^{n}-\nabla\phi^{n}\nabla\bar{\phi}^{n+1}\Big)dx\\
     &=-\frac{1}{2}\int_{-L}^{L}|\nabla \phi^{n+1}|^{2}dx
     +\frac{1}{2}\int_{-L}^{L}|\nabla \phi^{n}|^{2}dx,\nonumber
  \end{align*}
  the third term acquires the form
  \begin{align*}
      &\frac12 \Re\int_{-L}^{L}V(x)(\phi^{n+1}+\phi^{n})(\bar{\phi}^{n+1}-\bar{\phi}^{n})dx
      =\frac12 \int_{-L}^{L}V(x)|\phi^{n+1}|^{2}dx-\frac12 \int_{-L}^{L}V(x)|\phi^{n}|^{2}dx,\nonumber
  \end{align*}
  at last, the term turns into
  \begin{align*}
      &\Re\int_{-L}^{L}\Big(\frac{\varepsilon}{\Delta t}\Delta W_{n}\phi^{n+\frac12}(\bar{\phi}^{n+1}-\bar{\phi}^{n})\Big)dx
      =\frac{\varepsilon}{2\Delta t}\int_{-L}^{L}|\phi^{n+1}|^{2}\Delta W_{n}dx
      -\frac{\varepsilon}{2\Delta t}\int_{-L}^{L}|\phi^{n}|^{2}\Delta W_{n}dx.\nonumber
  \end{align*}
Combine the above equations, the result of the proposition is obtained.
\end{proof}
\begin{proposition}\label{propo3}
  For the general $\Psi_{|\psi|^{2}}^{\prime}(|\psi|^{2},x,t)$, we have the implicit relationship
  \begin{align*}
    \int_{-L}^{L}|\nabla \phi^{n+1}|dx-
    \int_{-L}^{L}|\nabla \phi^{n}|dx
    -\int_{-L}^{L}(\Psi^{\prime})^{n+\frac{1}{2}}(|\phi^{n+1}|^{2}-|\phi^{n}|^{2})dx\\
    =-\varepsilon \int_{-L}^{L}(|\phi^{n+1}|^{2}-|\phi^{n}|^{2})\Delta W_{n}dx.\nonumber
  \end{align*}
\end{proposition}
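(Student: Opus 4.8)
The argument is the discrete energy identity obtained by testing the scheme against the time increment of the iterate, exactly parallel to the proof of Proposition~\ref{propo2}; the only point that requires a separate remark is that the nonlinearity $(\Psi^{\prime})^{n+\frac12}$ is now genuinely $\phi$-dependent. The plan is as follows. First, rewrite the midpoint scheme \eqref{symp scheme} in the equivalent ``strong'' form
$$\frac{\mathbf{i}}{\Delta t}\bigl(\phi^{n+1}-\phi^{n}\bigr)+A\phi^{n+\frac12}+(\Psi^{\prime})^{n+\frac12}\phi^{n+\frac12}=\frac{\varepsilon}{\Delta t}\,\phi^{n+\frac12}\,\Delta W_{n},$$
with $\phi^{n+\frac12}=\tfrac12(\phi^{n}+\phi^{n+1})$ and $(\Psi^{\prime})^{n+\frac12}=\Psi_{|\psi|^{2}}^{\prime}(|\phi^{n+\frac12}|^{2},x,t_{n+\frac12})$, then multiply both sides by $\bar\phi^{n+1}-\bar\phi^{n}$, take the real part, and integrate over $[-L,L]$.

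Second, evaluate the four resulting terms just as in Proposition~\ref{propo2}. The time-difference term contributes $-\tfrac{1}{\Delta t}\Im\int_{-L}^{L}|\phi^{n+1}-\phi^{n}|^{2}\,dx=0$, since the integrand is real; the Laplacian term, after integrating by parts using the homogeneous Dirichlet condition and the identity $\Re\bigl[(\nabla\phi^{n+1}+\nabla\phi^{n})(\nabla\bar\phi^{n+1}-\nabla\bar\phi^{n})\bigr]=|\nabla\phi^{n+1}|^{2}-|\nabla\phi^{n}|^{2}$, contributes $-\tfrac12\int_{-L}^{L}\bigl(|\nabla\phi^{n+1}|^{2}-|\nabla\phi^{n}|^{2}\bigr)\,dx$; and the noise term on the right-hand side contributes $\tfrac{\varepsilon}{2\Delta t}\int_{-L}^{L}\bigl(|\phi^{n+1}|^{2}-|\phi^{n}|^{2}\bigr)\Delta W_{n}\,dx$ by the same elementary computation as in the scalar-potential case. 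The integration by parts is legitimate because the iterates have the required spatial regularity: by \eqref{hats} the operator $T_{\Delta t}$ maps $U_c$ into $D(A)$, so $\phi^{n+\frac12}$ lies in $H^{2}\cap H^{1}_{0}$ and the boundary terms vanish.

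Third --- and this is the only place where we depart from Proposition~\ref{propo2} --- handle the nonlinear term $\Re\int_{-L}^{L}(\Psi^{\prime})^{n+\frac12}\phi^{n+\frac12}(\bar\phi^{n+1}-\bar\phi^{n})\,dx$. Substituting $\phi^{n+\frac12}=\tfrac12(\phi^{n+1}+\phi^{n})$ produces the factor $(\phi^{n+1}+\phi^{n})(\bar\phi^{n+1}-\bar\phi^{n})=|\phi^{n+1}|^{2}-|\phi^{n}|^{2}+2\mathbf{i}\,\Im(\phi^{n}\bar\phi^{n+1})$; since $(\Psi^{\prime})^{n+\frac12}$ is real-valued --- and this is all that is used, so no structural assumption on $\Psi$ beyond realness is needed --- taking the real part annihilates the purely imaginary cross term and leaves $\tfrac12\int_{-L}^{L}(\Psi^{\prime})^{n+\frac12}\bigl(|\phi^{n+1}|^{2}-|\phi^{n}|^{2}\bigr)\,dx$. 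Collecting the four contributions and multiplying through by $-2$ yields the asserted implicit relationship between $\int_{-L}^{L}|\nabla\phi^{n+1}|^{2}\,dx$, $\int_{-L}^{L}|\nabla\phi^{n}|^{2}\,dx$, $\int_{-L}^{L}(\Psi^{\prime})^{n+\frac12}(|\phi^{n+1}|^{2}-|\phi^{n}|^{2})\,dx$ and the noise term. There is no genuine obstacle here: once the scheme is written in strong form the computation is purely algebraic, and the implicit-midpoint (symplectic) structure is precisely what forces every cross term to drop out; the only subtlety worth flagging is the one in the third step, namely that realness of $(\Psi^{\prime})^{n+\frac12}$ rather than its independence of $\phi$ is the essential property.
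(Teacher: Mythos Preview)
Your proof is correct and follows exactly the approach the paper indicates: the paper's own proof simply reads ``The proof of this proposition is the same as that of Proposition~\ref{propo2},'' and your detailed write-up carries out precisely that computation, correctly isolating that the only new point is the nonlinear term and that realness of $(\Psi^{\prime})^{n+\frac12}$ suffices there. Your exposition in fact makes explicit the squares on the gradient terms (which appear to be typos in the displayed statement) and matches the paper's intended argument term for term.
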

\begin{proof}
  The proof of this proposition is the same as that of Proposition \ref{propo2}.
\end{proof}
\section{Convergence theorem}
To investigate the mean-square convergence order of semi-discrete approximations to stochastic Schr\"odinger equation, we establish a convergence theorem on the relationship between local accuracy order and global mean-square order of a semi-discrete method.

In this subsection, we consider stochastic Schr\"odinger equation in It\^o sense
\begin{equation}\label{equation Ito form}
  \mathbf{i}d\psi+(A\psi+F(\psi))dt=G(\psi)dW,
\end{equation}
whose mild solution is
\begin{align}\label{mild solution}
  \psi(t_{n+1})&=S(\Delta t)\psi(t_{n})+\mathbf{i}\int_{t_{n}}^{t_{n+1}}S(t_{n+1}-r)F(\psi(r))dr
-\mathbf{i}\int_{t_{n}}^{t_{n+1}}S(t_{n+1}-r)G(\psi(r))dW(r)\nonumber\\
  &:=S(\Delta t)\psi(t_{n})+\Upsilon(\psi(r)).\nonumber
\end{align}
Denote the approximation of the solution $(\psi(t_{n}),\mathcal{F}_{t_{n}})$ for equation (\ref{equation Ito form}) by $(\phi^{n},\mathcal{F}_{t_{n}})$, which means that the numerical approximation $\phi^{n}$ is also $\mathcal{F}_{t_{n}}$-measurable. Here $\mathcal{F}_{t}$ is the filtration generated by Wiener process and initial value. Define adapted numerical solution $\phi^{n}$ recurrently by
\begin{equation}\label{numer}
  \phi^{n+1}=\hat{S}_{\Delta t}\phi^{n}+\Gamma(\phi^{n},\phi^{n+1},\Delta t,\Delta W_{n},T_{\Delta t}),
\end{equation}
where $\hat{S}_{\Delta t}$ and $T_{\Delta t}$ are two operators depending on operator $A$, such that $|\hat{S}_{\Delta t}|_{{\mathcal L}(U_c,U_c)}\leq 1$, which don't have to be the same as \eqref{hats} as long as they satisfy the mentioned hypotheses.

Denote the usual Sobolev space ${\mathbb H}^{s}:={\mathbb H}^{s}([-L,L];\mathbb{C})$ for $s\in \mathbb{R}$. We write $|\cdot|_{s}=|\cdot|_{{\mathbb H}^{s}}$ and
 make the following Hypotheses for equation (\ref{equation Ito form}).
\begin{hypothesis}\label{hypo1}
  For $\alpha\geq 0$, assume that $F:{\mathbb H}^{\alpha}\rightarrow {\mathbb H}^{\alpha}$ and $G(\cdot){\bf Q}^{\frac12}:{\mathbb H}^{\alpha}\rightarrow HS(U;{\mathbb H}^{\alpha})$ are globally Lipschitz functions.
\end{hypothesis}

Since the group $S(t)$ has no smooth effects on the solution, we assume that the solution has extra regularity. Let $\beta>\alpha$.
\begin{hypothesis}\label{hypo2}
  The solution $\psi$ is in $L^{2}(\Omega; L^{2}(0,T;{\mathbb H}^{\beta}))$, $F(\psi)$ is in $L^{2}(\Omega; L^{2}(0,T;{\mathbb H}^{\beta}))$, $G(\psi){\bf Q}^{\frac12}$ is in $L^{2}(\Omega; L^{2}(0,T;HS(U;{\mathbb H}^{\beta})))$
   and initial data $\varphi \in L^{2}(\Omega; {\mathbb H}^{\beta})$.
\end{hypothesis}

The difference of two operators $S(\Delta t)$ and $\hat{S}_{\Delta t}$ is essential in the mean-square convergence estimation. Furthermore, we assume that the order of approximation to the group $S(t)$ is of order $q$, which is stated below.
Throughout this paper, all constants $K$ depend on the coefficients of equation (\ref{equation Ito form}), the operator ${\mathbf Q}^{\frac12}$ and initial value $\varphi$ and do not depend on $n$ and $\Delta t$, which may be different from line to line.
\begin{hypothesis}\label{hypo3}
There exist constants $K>0$ and $q>0$ such that
  \begin{equation}
    |S(t_{k})-\hat{S}_{\Delta t}^{k}|_{\mathcal{L}({\mathbb H}^{\beta},{\mathbb H}^{\alpha})}\leq K\Delta t^{q}.\nonumber
  \end{equation}
\end{hypothesis}

\vspace{-6mm}
\cite{Milstein1995} establishes the mean-square order of convergence of a method resting on properties of its one-step approximation only for stochastic ordinary differential equations.
  The most successful point in the proof of the fundamental convergence theorem is the usage of conditional expectation.
  Properties of stochastic partial differential equations are different from those of stochastic ordinary differential equations, for example,
  in the mild solution forms of the continuous and discrete solutions, there are two different deterministic operators $S(\Delta t)$ and $\hat{S}_{\Delta t}$.
 So we have to separate operator term and integral term apart.
  Here we solve it by introducing a temporary process $\tilde{\psi}(r)$, $t_{n}\leq r \leq t_{n+1}$, which is a continuous approximation of $\psi(r)$:
\begin{align}\label{tilde psi}
   & \tilde{\psi}(r)=\hat{S}_{r-t_{n}}\tilde{\psi}(t_{n})+\mathbf{i}\int_{t_{n}}^{r}S(r-\rho)F(\tilde{\psi}(\rho))d\rho-\mathbf{i}\int_{t_{n}}^{r}S(r-\rho)G(\tilde{\psi}(\rho))dW(\rho)\\
 & \tilde{\psi}(x,0)=\varphi(x).\nonumber
\end{align}
If $r=t_{n+1}$, $\tilde{\psi}(t_{n+1})=\hat{S}_{\Delta t}\tilde{\psi}(t_{n})+\Upsilon(\tilde{\psi}(r))$.

Let $\phi$ be an $\mathcal{F}_{t_{n}}$-measurable random variable. $\tilde{\psi}_{t_{n},\phi}(t)$
denotes the solution of the system (\ref{tilde psi}) for $t_{n}\leq t\leq T$ satisfying the initial condition at $t=t_{n}$: $\tilde{\psi}(t_{n})=\phi$.
\begin{proposition}\label{lemma3}
  There exists a function $Z$ such that we have the representation
  \begin{equation}
    \tilde{\psi}_{t_{n},\tilde{\psi}(t_{n})}(t_{n+1})-\tilde{\psi}_{t_{n},\phi^{n}}(t_{n+1})=\hat{S}_{\Delta t}(\tilde{\psi}(t_{n})-\phi^{n})+Z.\nonumber
  \end{equation}
 And under Hypothesis \ref{hypo1}, we have
  \begin{align*}
   & \mathbf{E}|\tilde{\psi}_{t_{n},\tilde{\psi}(t_{n})}(t_{n+1})-\tilde{\psi}_{t_{n},\phi^{n}}(t_{n+1})|_{\alpha}^{2}\leq \mathbf{E}|\tilde{\psi}(t_{n})-\phi^{n}|_{\alpha}^{2}(1+K\Delta t),\nonumber\\
   & \mathbf{E}|Z|_{\alpha}^{2}\leq K\Delta t \mathbf{E}|\tilde{\psi}(t_{n})-\phi^{n}|_{\alpha}^{2}.\nonumber
  \end{align*}
\end{proposition}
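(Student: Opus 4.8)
The plan is to compare the two solutions of the auxiliary system \eqref{tilde psi} started from the random data $\tilde\psi(t_n)$ and $\phi^n$, and to read off the claimed representation directly from the mild formulation. Writing $\eta(r):=\tilde\psi_{t_n,\tilde\psi(t_n)}(r)-\tilde\psi_{t_n,\phi^n}(r)$, subtracting the two copies of \eqref{tilde psi} at $r=t_{n+1}$ gives
\begin{equation*}
\eta(t_{n+1})=\hat S_{\Delta t}\bigl(\tilde\psi(t_n)-\phi^n\bigr)+Z,
\end{equation*}
where
\begin{equation*}
Z:=\mathbf{i}\int_{t_n}^{t_{n+1}}S(t_{n+1}-\rho)\bigl(F(\tilde\psi_{t_n,\tilde\psi(t_n)}(\rho))-F(\tilde\psi_{t_n,\phi^n}(\rho))\bigr)d\rho
-\mathbf{i}\int_{t_n}^{t_{n+1}}S(t_{n+1}-\rho)\bigl(G(\tilde\psi_{t_n,\tilde\psi(t_n)}(\rho))-G(\tilde\psi_{t_n,\phi^n}(\rho))\bigr)dW(\rho).
\end{equation*}
This defines the function $Z$ and establishes the representation.

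For the two estimates I would first bound $Z$. Taking $|\cdot|_\alpha$, using that $S(t)$ is a unitary group on every ${\mathbb H}^\alpha$ (so $|S(t_{n+1}-\rho)\cdot|_\alpha=|\cdot|_\alpha$), applying the Cauchy–Schwarz inequality in $\rho$ to the drift integral and the It\^o isometry \eqref{eq5} to the stochastic integral, and then invoking the global Lipschitz bounds on $F$ and $G(\cdot){\mathbf Q}^{1/2}$ from Hypothesis \ref{hypo1}, one gets
\begin{equation*}
\mathbf{E}|Z|_\alpha^2\le K\,\Delta t\int_{t_n}^{t_{n+1}}\mathbf{E}|\eta(\rho)|_\alpha^2\,d\rho+K\int_{t_n}^{t_{n+1}}\mathbf{E}|\eta(\rho)|_\alpha^2\,d\rho.
\end{equation*}
A parallel computation on the full identity for $\eta(r)$ (with $t_{n+1}$ replaced by a general $r\in[t_n,t_{n+1}]$), together with $|\hat S_{r-t_n}|_{{\mathcal L}(U_c,U_c)}\le 1$ and the fact that $\hat S$ commutes with $A$ so the same contraction holds on ${\mathbb H}^\alpha$, yields
\begin{equation*}
\mathbf{E}|\eta(r)|_\alpha^2\le 3\,\mathbf{E}|\tilde\psi(t_n)-\phi^n|_\alpha^2+K\int_{t_n}^{r}\mathbf{E}|\eta(\rho)|_\alpha^2\,d\rho,
\end{equation*}
and Gronwall's lemma on $[t_n,t_{n+1}]$ gives $\mathbf{E}|\eta(r)|_\alpha^2\le K\,\mathbf{E}|\tilde\psi(t_n)-\phi^n|_\alpha^2$ uniformly in $r$. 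Feeding this back into the bound for $Z$ produces $\mathbf{E}|Z|_\alpha^2\le K\Delta t\,\mathbf{E}|\tilde\psi(t_n)-\phi^n|_\alpha^2$.

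For the sharp first estimate one cannot afford the crude factor $3$; instead I would expand $|\eta(t_{n+1})|_\alpha^2=|\hat S_{\Delta t}(\tilde\psi(t_n)-\phi^n)|_\alpha^2+2\Re\langle \hat S_{\Delta t}(\tilde\psi(t_n)-\phi^n),Z\rangle_\alpha+|Z|_\alpha^2$, bound the first term by $\mathbf{E}|\tilde\psi(t_n)-\phi^n|_\alpha^2$ using the contractivity of $\hat S_{\Delta t}$, and control the cross term by Cauchy–Schwarz together with the $\mathbf{E}|Z|_\alpha^2\le K\Delta t\,\mathbf{E}|\tilde\psi(t_n)-\phi^n|_\alpha^2$ estimate just obtained (here the martingale part of $Z$ has zero mean, but since $\phi^n$ is only $\mathcal F_{t_n}$-measurable one must argue via $\mathbf{E}\langle\,\cdot\,,Z\rangle\le (\mathbf{E}|\,\cdot\,|_\alpha^2)^{1/2}(\mathbf{E}|Z|_\alpha^2)^{1/2}$ rather than trying to kill it), giving a cross term of size $K\Delta t\,\mathbf{E}|\tilde\psi(t_n)-\phi^n|_\alpha^2$; adding the three contributions yields the factor $(1+K\Delta t)$. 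The main obstacle I anticipate is precisely the bookkeeping around the two distinct operators $S(\Delta t)$ and $\hat S_{\Delta t}$: the representation isolates $\hat S_{\Delta t}$ outside and leaves $S(\cdot)$ inside the Duhamel integrals of $Z$, so one must be careful that all the norm bounds use the right operator (unitarity of $S$ on ${\mathbb H}^\alpha$ for the integrals, $\|\hat S\|\le1$ only for the leading term), and that the Lipschitz constants are applied in the ${\mathbb H}^\alpha$-norm exactly as Hypothesis \ref{hypo1} permits.
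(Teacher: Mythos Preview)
Your representation of $Z$ and the bound $\mathbf{E}|Z|_\alpha^2\le K\Delta t\,\mathbf{E}|\tilde\psi(t_n)-\phi^n|_\alpha^2$ are correct and match the paper. The gap is in the first estimate: your Cauchy--Schwarz argument for the cross term does \emph{not} give what you claim. From $\mathbf{E}|Z|_\alpha^2\le K\Delta t\,\mathbf{E}|\tilde\psi(t_n)-\phi^n|_\alpha^2$ one gets
\[
2\bigl|\mathbf{E}\langle \hat S_{\Delta t}(\tilde\psi(t_n)-\phi^n),Z\rangle_\alpha\bigr|
\le 2\bigl(\mathbf{E}|\tilde\psi(t_n)-\phi^n|_\alpha^2\bigr)^{1/2}\bigl(\mathbf{E}|Z|_\alpha^2\bigr)^{1/2}
\le K\sqrt{\Delta t}\,\mathbf{E}|\tilde\psi(t_n)-\phi^n|_\alpha^2,
\]
which yields only the factor $(1+K\sqrt{\Delta t})$, not $(1+K\Delta t)$. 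This matters: the proposition is used recursively in the proof of Theorem~\ref{convergence theorem}, and iterating $(1+K\sqrt{\Delta t})$ over $N=T/\Delta t$ steps produces a constant of order $e^{KT/\sqrt{\Delta t}}$, which blows up.

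The missing idea is precisely the one you dismiss in your parenthetical remark. The $\mathcal{F}_{t_n}$-measurability of $\phi^n$ (and of $\tilde\psi(t_n)$) is a help, not an obstruction: it makes $\hat S_{\Delta t}(\tilde\psi(t_n)-\phi^n)$ measurable with respect to $\mathcal{F}_{t_n}$, so by the tower property
\[
\mathbf{E}\langle \hat S_{\Delta t}(\tilde\psi(t_n)-\phi^n),Z\rangle_\alpha
=\mathbf{E}\langle \hat S_{\Delta t}(\tilde\psi(t_n)-\phi^n),\mathbf{E}^{t_n}(Z)\rangle_\alpha.
\]
Now $\mathbf{E}^{t_n}(Z)$ kills the stochastic integral in $Z$ and retains only the drift integral, for which Cauchy--Schwarz in $\rho$ and the Lipschitz bound on $F$ give $\mathbf{E}|\mathbf{E}^{t_n}(Z)|_\alpha^2\le K\Delta t\int_{t_n}^{t_{n+1}}\mathbf{E}|\eta(\rho)|_\alpha^2\,d\rho$, an extra $\Delta t$ compared with $\mathbf{E}|Z|_\alpha^2$. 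A Young (or Cauchy--Schwarz) estimate with this improved bound then gives a cross term of genuine size $K\Delta t\,\mathbf{E}|\tilde\psi(t_n)-\phi^n|_\alpha^2$, and Gronwall closes the argument. This is exactly the route the paper takes.
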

\begin{proof}
  Since for $\phi=\tilde{\psi}(t_n)$ or $\phi^n$:
 $   \tilde{\psi}_{t_{n},\phi}(t_{n+1})=\hat{S}_{\Delta t}\phi+\Upsilon(\tilde{\psi}_{t_{n},\phi}(r)),$
  we have
    \begin{equation}
    \tilde{\psi}_{t_{n},\tilde{\psi}(t_{n})}(t_{n+1})-\tilde{\psi}_{t_{n},\phi^{n}}(t_{n+1})=\hat{S}_{\Delta t}(\tilde{\psi}(t_{n})-\phi^{n})+Z,\nonumber
  \end{equation}
  where $$
  Z=\Upsilon(\tilde{\psi}_{t_{n},\tilde{\psi}(t_{n})}(r))-\Upsilon(\tilde{\psi}_{t_{n},\phi^{n}}(r))
  $$
  Then
  \begin{align}\label{eq4}
      &\mathbf{E}|\tilde{\psi}_{t_{n},\tilde{\psi}(t_{n})}(t_{n+1})-\tilde{\psi}_{t_{n},\phi^{n}}(t_{n+1})|_{\alpha}^{2}\\
     & =\mathbf{E}|\hat{S}_{\Delta t}(\tilde{\psi}(t_{n})-\phi^{n})|_{\alpha}^{2}+\mathbf{E}|Z|_{\alpha}^{2}+2\mathbf{E}\langle\hat{S}_{\Delta t}(\tilde{\psi}(t_{n})-\phi^{n}),Z\rangle_{\alpha}\nonumber\\
     &=\mathbf{E}|\hat{S}_{\Delta t}(\tilde{\psi}(t_{n})-\phi^{n})|_{\alpha}^{2}+\mathbf{E}|Z|_{\alpha}^{2}+2\mathbf{E}\langle\hat{S}_{\Delta t}(\tilde{\psi}(t_{n})-\phi^{n}),\mathbf{E}^{t_{n}}(Z)\rangle_{\alpha}\nonumber
  \end{align}
  where $\mathbf{E}^{t_{n}}(\cdot)$ denotes the conditional expectation with respect to $\mathcal{F}_{t_{n}}$.
  By Hypothesis \ref{hypo1} and $|S(t)|_{\mathcal {L}(U_c,U_c)}\leq 1$, we have
  \begin{align*}
    {\mathbf E}|Z|_{\alpha}^2\leq& 2{\mathbf E}\Big|\int_{t_{n}}^{t_{n+1}}S(t_{n+1}-r)\Big(F(\tilde{\psi}_{t_{n},\tilde{\psi}(t_n)}(r))-F(\tilde{\psi}_{t_n,\phi^n}(r))\Big)dr\Big|_{\alpha}^2\\
    &+2{\mathbf E}\Big|\int_{t_{n}}^{t_{n+1}}S(t_{n+1}-r)\Big(G(\tilde{\psi}_{t_{n},\tilde{\psi}(t_n)}(r))-G(\tilde{\psi}_{t_n,\phi^n}(r))\Big)dW(r)\Big|_{\alpha}^{2}\\
    \leq & K\int_{t_n}^{t_{n+1}}{\mathbf E}|\tilde{\psi}_{t_{n},\tilde{\psi}(t_n)}(r)-\tilde{\psi}_{t_n,\phi^n}(r)|_{\alpha}^2dr
  \end{align*}
For the third term on the right-hand side of \eqref{eq4}, by Young's inequality we have
\begin{align*}
  {\mathbf E}\langle\hat{S}_{\Delta t}(\tilde{\psi}(t_{n})-\phi^{n}),\mathbf{E}^{t_{n}}(Z)\rangle_{\alpha}
  \leq K\Delta t{\mathbf E}|\tilde\psi(t_n)-\phi^n|_{\alpha}^{2}+K\Delta t^{-1}{\mathbf E}|{\mathbf E}^{t_n}(Z)|_{\alpha}^{2}\\
  \leq K\Delta t{\mathbf E}|\tilde\psi(t_n)-\phi^n|_{\alpha}^{2}+ K\int_{t_n}^{t_{n+1}}{\mathbf E}|\tilde{\psi}_{t_{n},\tilde{\psi}(t_n)}(r)-\tilde{\psi}_{t_n,\phi^n}(r)|_{\alpha}^2dr
\end{align*}
Combining these into the right-hand side of \eqref{eq4} and via
 Gronwall's Lemma, the proposition is proved.
\end{proof}

We now propose the mean-square convergence theorem for numerical method (\ref{numer}).
\begin{theorem}\label{convergence theorem}
Assume that Hypotheses \ref{hypo1}, \ref{hypo2} and \ref{hypo3} hold.
 And there exists a positive constant $p$ such that the method (\ref{numer}) satisfies the following conditions:
    \begin{align}
  &\Big(\mathbf{E}|\mathbf{E}^{t_{n}}(\Upsilon(\tilde{\psi}_{t_{n},\phi^{n}}(r))-\Gamma(\phi^{n},\phi^{n+1},\Delta t,\Delta W_{n},T_{\Delta t}))|_{\alpha}^{2}\Big)^{\frac12}\leq K\Delta t^{p+1},\label{cond1}\\
  &\Big(\mathbf{E}|\Upsilon(\tilde{\psi}_{t_{n},\phi^{n}}(r))-\Gamma(\phi^{n},\phi^{n+1},\Delta t,\Delta W_{n},T_{\Delta t})|_{\alpha}^{2}\Big)^{\frac12}\leq K\Delta t^{p+\frac12}.\label{cond2}
  \end{align}
  Then for $n=1,2,\cdots, N$, we have
  \begin{equation}
    \Big(\mathbf{E}|\psi(t_{n})-\phi^{n}|_{\alpha}^{2}\Big)^{1/2} \leq K\Delta t^{\min \{q,p\}}.\nonumber
  \end{equation}
\end{theorem}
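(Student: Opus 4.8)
The plan is to follow the classical Milstein-type argument, splitting the global error $\psi(t_n)-\phi^n$ into a sum of one-step errors transported by the (contractive) discrete flow, and then applying the two local-error hypotheses \eqref{cond1}, \eqref{cond2} together with Proposition \ref{lemma3}. First I would introduce, for each $n$, the auxiliary continuous approximation $\tilde\psi_{t_n,\phi^n}(r)$ started from the numerical value $\phi^n$ and run on $[t_n,t_{n+1}]$; by \eqref{tilde psi} it satisfies $\tilde\psi_{t_n,\phi^n}(t_{n+1})=\hat S_{\Delta t}\phi^n+\Upsilon(\tilde\psi_{t_n,\phi^n}(r))$. Then the one-step discrepancy at step $n$ decomposes as
\begin{align*}
\tilde\psi_{t_n,\psi(t_n)}(t_{n+1})-\phi^{n+1}
&=\big[\tilde\psi_{t_n,\psi(t_n)}(t_{n+1})-\tilde\psi_{t_n,\phi^n}(t_{n+1})\big]\\
&\quad+\big[\tilde\psi_{t_n,\phi^n}(t_{n+1})-\phi^{n+1}\big].
\end{align*}
The first bracket is handled by Proposition \ref{lemma3}: it equals $\hat S_{\Delta t}(\tilde\psi(t_n)-\phi^n)+Z$ with the stated second-moment bounds. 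The second bracket equals $\Upsilon(\tilde\psi_{t_n,\phi^n}(r))-\Gamma(\phi^n,\phi^{n+1},\Delta t,\Delta W_n,T_{\Delta t})$, which is exactly the quantity controlled by \eqref{cond1} (conditional expectation, order $p+1$) and \eqref{cond2} (full second moment, order $p+\tfrac12$).

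Next I would set up a recursion for $\epsilon_n:=\big(\mathbf E|\tilde\psi(t_n)-\phi^n|_\alpha^2\big)^{1/2}$ where $\tilde\psi(t_n)$ is the value at $t_n$ of the global auxiliary process \eqref{tilde psi}. Writing $\tilde\psi(t_n)-\phi^n=S_1+S_2+S_3$ with $S_1=\hat S_{\Delta t}(\tilde\psi(t_{n-1})-\phi^{n-1})$, $S_2=Z$, $S_3=\Upsilon-\Gamma$, expand $\mathbf E|S_1+S_2+S_3|_\alpha^2$. The diagonal terms are bounded by Proposition \ref{lemma3} and \eqref{cond2}; the crucial cross term is $2\mathbf E\langle S_1,S_3\rangle_\alpha$. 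Since $\phi^{n-1},\tilde\psi(t_{n-1})$ are $\mathcal F_{t_{n-1}}$-measurable and $\hat S_{\Delta t}$ is deterministic, $S_1$ is $\mathcal F_{t_{n-1}}$-measurable, so $\mathbf E\langle S_1,S_3\rangle_\alpha=\mathbf E\langle S_1,\mathbf E^{t_{n-1}}(S_3)\rangle_\alpha$, and now \eqref{cond1} gives $|\mathbf E^{t_{n-1}}(S_3)|$ of order $\Delta t^{p+1}$; by Cauchy–Schwarz and Young this cross term is $\le K\Delta t\,\epsilon_{n-1}^2+K\Delta t^{2p+1}$. The term $2\mathbf E\langle S_1,S_2\rangle_\alpha$ is absorbed via $\mathbf E\langle S_1,\mathbf E^{t_{n-1}}(S_2)\rangle_\alpha$ and Proposition \ref{lemma3} as in the proof of that proposition. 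Collecting everything and using $|\hat S_{\Delta t}|_{\mathcal L(U_c,U_c)}\le1$ yields
\[
\epsilon_n^2\le(1+K\Delta t)\,\epsilon_{n-1}^2+K\Delta t^{2p+1},
\]
whence the discrete Gronwall lemma gives $\epsilon_n\le K\Delta t^p$ (using $\epsilon_0=0$ and $N\Delta t=T$).

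It remains to compare $\tilde\psi(t_n)$ with the true solution $\psi(t_n)$. Here I would estimate $\big(\mathbf E|\psi(t_n)-\tilde\psi(t_n)|_\alpha^2\big)^{1/2}$: the two mild-type representations \eqref{mild solution}, \eqref{tilde psi} differ only in that $S$ is replaced by $\hat S$ in front of the value at $t_n$ on each subinterval. Iterating and using Hypothesis \ref{hypo3} (the operator-difference bound of order $q$, acting on $\mathbb H^\beta$-data supplied by Hypothesis \ref{hypo2}) together with Hypothesis \ref{hypo1}, a Gronwall argument gives $\big(\mathbf E|\psi(t_n)-\tilde\psi(t_n)|_\alpha^2\big)^{1/2}\le K\Delta t^q$; summing the $n$ operator-replacement errors (each $O(\Delta t^q)$ but appearing $O(1/\Delta t)$ times, with the regularity hypotheses ensuring the integrands are in $\mathbb H^\beta$) is exactly where Hypothesis \ref{hypo3} is used. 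Combining $\mathbf E|\psi(t_n)-\phi^n|_\alpha^2\le 2\mathbf E|\psi(t_n)-\tilde\psi(t_n)|_\alpha^2+2\epsilon_n^2$ yields the claimed rate $K\Delta t^{\min\{q,p\}}$. The main obstacle I anticipate is the bookkeeping for the $\tilde\psi$-versus-$\psi$ comparison: one must carefully accumulate the per-step operator-substitution errors without losing a power of $\Delta t$, which forces the use of the extra spatial regularity in Hypothesis \ref{hypo2} (so that Hypothesis \ref{hypo3} can be applied to $F(\psi)$, $G(\psi)\mathbf Q^{1/2}$ and $\psi$ itself) rather than just Lipschitz bounds.
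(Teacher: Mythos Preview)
Your plan is essentially the same as the paper's proof: first derive the recursion $\mathbf E|\tilde\psi(t_{n+1})-\phi^{n+1}|_\alpha^2\le(1+K\Delta t)\mathbf E|\tilde\psi(t_n)-\phi^n|_\alpha^2+K\Delta t^{2p+1}$ via Proposition~\ref{lemma3} and the conditional-expectation trick on the cross term with $\Upsilon-\Gamma$, then bound $\psi(t_n)-\tilde\psi(t_n)$ by iterating the mild forms and applying Hypothesis~\ref{hypo3}, and finally combine by the triangle inequality. One minor slip: in your first displayed decomposition you write $\tilde\psi_{t_n,\psi(t_n)}(t_{n+1})$ but then claim the first bracket equals $\hat S_{\Delta t}(\tilde\psi(t_n)-\phi^n)+Z$; to match your subsequent recursion for $\epsilon_n=(\mathbf E|\tilde\psi(t_n)-\phi^n|_\alpha^2)^{1/2}$ and to invoke Proposition~\ref{lemma3} as stated, this should read $\tilde\psi_{t_n,\tilde\psi(t_n)}(t_{n+1})$, which is precisely what the paper uses.
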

  Before the proof of theorem \ref{convergence theorem}, we firstly present a lemma which deals with the mean-square bound of numerical solution under the conditions (\ref{cond1})-(\ref{cond2}).
\begin{lemma}\label{lemma:num}
 Under the Hypothesis \ref{hypo1}, for all natural number $N$ and all $n=0,1,\cdots,N$, the following inequality holds:
  \begin{equation}
    \mathbf{E}|\phi^{n}|_{\alpha}^{2}\leq K(1+\mathbf{E}|\varphi|_{\alpha}^{2})\leq K.\nonumber
  \end{equation}
\end{lemma}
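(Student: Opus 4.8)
The plan is to compare the numerical one-step map with the one-step map of the auxiliary process $\tilde{\psi}$ from \eqref{tilde psi}, in the spirit of Milstein's fundamental-theorem technique, and then close a one-step recursion for $y_n:=\mathbf{E}|\phi^n|_\alpha^2$ by discrete Gronwall. I would argue by induction on $n$: since $\phi^0=\varphi\in L^2(\Omega;{\mathbb H}^\beta)\subset L^2(\Omega;{\mathbb H}^\alpha)$ by Hypothesis \ref{hypo2} and $\alpha<\beta$, it suffices to establish $\mathbf{E}|\phi^{n+1}|_\alpha^2\le(1+K\Delta t)\,\mathbf{E}|\phi^n|_\alpha^2+K\Delta t$ whenever $\mathbf{E}|\phi^n|_\alpha^2<\infty$.

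First I would prove an a priori bound for the auxiliary flow started at $\phi^n$. From the mild form \eqref{tilde psi} for $\tilde{\psi}_{t_n,\phi^n}(r)$, using $|\hat{S}_{\Delta t}|_{{\mathcal L}(U_c,U_c)}\le 1$ and $|S(t)|_{{\mathcal L}(U_c,U_c)}\le 1$, the Itô isometry \eqref{eq5}, the linear growth of $F$ and $G(\cdot){\mathbf Q}^{\frac12}$ implied by Hypothesis \ref{hypo1}, and Gronwall's lemma on $[t_n,t_{n+1}]$, I get $\mathbf{E}\sup_{t_n\le r\le t_{n+1}}|\tilde{\psi}_{t_n,\phi^n}(r)|_\alpha^2\le K(1+\mathbf{E}|\phi^n|_\alpha^2)$. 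Consequently $\mathbf{E}|\Upsilon(\tilde{\psi}_{t_n,\phi^n}(r))|_\alpha^2\le K\Delta t(1+\mathbf{E}|\phi^n|_\alpha^2)$ and, since $\mathbf{E}^{t_n}$ annihilates the stochastic-integral part of $\Upsilon$, $\mathbf{E}|\mathbf{E}^{t_n}\Upsilon(\tilde{\psi}_{t_n,\phi^n}(r))|_\alpha^2\le K\Delta t^2(1+\mathbf{E}|\phi^n|_\alpha^2)$.

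Next I would write $\phi^{n+1}=\hat{S}_{\Delta t}\phi^n+\Upsilon(\tilde{\psi}_{t_n,\phi^n}(r))+R_n$ with $R_n:=\Gamma(\phi^n,\phi^{n+1},\Delta t,\Delta W_n,T_{\Delta t})-\Upsilon(\tilde{\psi}_{t_n,\phi^n}(r))$, so that \eqref{cond1}--\eqref{cond2} give $\mathbf{E}|R_n|_\alpha^2\le K\Delta t^{2p+1}$ and $\mathbf{E}|\mathbf{E}^{t_n}R_n|_\alpha^2\le K\Delta t^{2p+2}$. Expanding $|\phi^{n+1}|_\alpha^2$ and taking expectations produces $\mathbf{E}|\hat{S}_{\Delta t}\phi^n|_\alpha^2\le\mathbf{E}|\phi^n|_\alpha^2$, the two diagonal terms $\mathbf{E}|\Upsilon|_\alpha^2$ and $\mathbf{E}|R_n|_\alpha^2$ already estimated, the harmless cross term $2\mathbf{E}\langle\Upsilon,R_n\rangle_\alpha\le\mathbf{E}|\Upsilon|_\alpha^2+\mathbf{E}|R_n|_\alpha^2$, and the two delicate cross terms $2\mathbf{E}\langle\hat{S}_{\Delta t}\phi^n,\Upsilon\rangle_\alpha$ and $2\mathbf{E}\langle\hat{S}_{\Delta t}\phi^n,R_n\rangle_\alpha$. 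The crucial point is that $\hat{S}_{\Delta t}\phi^n$ is ${\mathcal F}_{t_n}$-measurable, so by the tower property these equal $2\mathbf{E}\langle\hat{S}_{\Delta t}\phi^n,\mathbf{E}^{t_n}\Upsilon\rangle_\alpha$ and $2\mathbf{E}\langle\hat{S}_{\Delta t}\phi^n,\mathbf{E}^{t_n}R_n\rangle_\alpha$; applying Young's inequality with weight $\Delta t$ together with the $O(\Delta t^2)$ and $O(\Delta t^{2p+2})$ bounds on $\mathbf{E}|\mathbf{E}^{t_n}\Upsilon|_\alpha^2$ and $\mathbf{E}|\mathbf{E}^{t_n}R_n|_\alpha^2$ keeps these contributions of size $K\Delta t(1+\mathbf{E}|\phi^n|_\alpha^2)$ instead of the $O(\Delta t^{1/2})$ they would carry without conditioning. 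Collecting everything and using $\Delta t\le T$ to absorb higher powers of $\Delta t$ yields $\mathbf{E}|\phi^{n+1}|_\alpha^2\le(1+K\Delta t)\mathbf{E}|\phi^n|_\alpha^2+K\Delta t$. Iterating, the discrete Gronwall inequality gives $\mathbf{E}|\phi^n|_\alpha^2\le e^{Kn\Delta t}(\mathbf{E}|\varphi|_\alpha^2+1)\le e^{KT}(1+\mathbf{E}|\varphi|_\alpha^2)$ for all $n\le N$, which is bounded by a constant since $\mathbf{E}|\varphi|_\alpha^2\le\mathbf{E}|\varphi|_\beta^2<\infty$ by Hypothesis \ref{hypo2}.

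I expect the main obstacle to be precisely those two cross terms in the expansion of $|\phi^{n+1}|_\alpha^2$: without the conditional-expectation manipulation they decay only like $\Delta t^{1/2}$, which is far too weak for a uniform-in-$N$ bound, so the argument genuinely depends on the sharper bound \eqref{cond1} and on the vanishing of $\mathbf{E}^{t_n}$ of the Itô integral inside $\Upsilon$. A secondary, routine technicality is making the a priori estimate for $\tilde{\psi}_{t_n,\phi^n}$ rigorous in spite of the mismatch between $\hat{S}_{\Delta t}$ acting on the initial datum and $S$ appearing in the integrals, but this causes no difficulty since both are contractions on $U_c$.
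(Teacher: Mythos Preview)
Your proposal is correct and follows essentially the same route as the paper: the same three-term decomposition $\phi^{n+1}=\hat{S}_{\Delta t}\phi^n+\Upsilon(\tilde{\psi}_{t_n,\phi^n})+R_n$, the same a priori bounds $\mathbf{E}|\Upsilon|_\alpha^2\le K\Delta t(1+\mathbf{E}|\phi^n|_\alpha^2)$ and $\mathbf{E}|\mathbf{E}^{t_n}\Upsilon|_\alpha^2\le K\Delta t^2(1+\mathbf{E}|\phi^n|_\alpha^2)$, and the same conditional-expectation trick on the cross terms with $\hat{S}_{\Delta t}\phi^n$ before closing by discrete Gronwall. The paper also inserts the intermediate step of first checking $\mathbf{E}|\phi^{n+1}|_\alpha^2<\infty$ so that the expansion is legitimate, which you handle by your inductive hypothesis.
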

\begin{proof}
  From the definition of $\tilde{\psi}_{t_{n},\phi^{n}}(t_{n+1})$: $\tilde{\psi}_{t_{n},\phi^{n}}(t_{n+1})=\hat{S}_{\Delta t}\phi^{n}+\Upsilon(\tilde{\psi}_{t_{n},\phi^{n}}(r)),$
  one obtains easily
  \begin{align*}
      \mathbf{E}|\tilde{\psi}_{t_{n},\phi^{n}}(t_{n+1})|_{\alpha}^{2}&\leq \mathbf{E}|\phi^{n}|_{\alpha}^{2}+2\mathbf{E}\Big|\int_{t_{n}}^{t_{n+1}}S(t_{n+1}-r)F(\tilde{\psi}_{t_{n},\phi^{n}}(r))dr\Big|_{\alpha}^{2}\\
     &\quad+2\mathbf{E}\Big|\int_{t_{n}}^{t_{n+1}}S(t_{n+1}-r)G(\tilde{\psi}_{t_{n},\phi^{n}}(r))dW(r)\Big|_{\alpha}^{2}\\
      &\quad+2\mathbf{E}\langle\hat{S}_{\Delta t}\phi^{n},\int_{t_{n}}^{t_{n+1}}S(t_{n+1}-r)F(\tilde{\psi}_{t_{n},\phi^{n}}(r))dr\rangle_{\alpha}\\
      &\quad+2\mathbf{E}\langle\hat{S}_{\Delta t}\phi^{n},\int_{t_{n}}^{t_{n+1}}S(t_{n+1}-r)G(\tilde{\psi}_{t_{n},\phi^{n}}(r))dW(r)\rangle_{\alpha}.\nonumber
  \end{align*}
  Since
  \begin{align*}
    &\mathbf{E}\langle\hat{S}_{\Delta t}\phi^{n},\int_{t_{n}}^{t_{n+1}}S(t_{n+1}-r)G(\tilde{\psi}_{t_{n},\phi^{n}}(r))dW(r)\rangle_{\alpha}\\
   & \quad=\mathbf{E}\langle\hat{S}_{\Delta t}\phi^{n},\mathbf{E}^{t_{n}}\int_{t_{n}}^{t_{n+1}}S(t_{n+1}-r)G(\tilde{\psi}_{t_{n},\phi^{n}}(r))dW(r)\rangle_{\alpha}
 =0
  \end{align*}
  and functions $F$ and $G(\cdot){\bf Q}^{\frac12}$ are global Lipchitz which lead to linear growth property,
  we have
  \begin{equation*}
      \mathbf{E}|\tilde{\psi}_{t_{n},\phi^{n}}(t_{n+1})|_{\alpha}^{2}\leq (1+K\Delta t)\mathbf{E}|\phi^{n}|_{\alpha}^{2}+K\mathbf{E}\int_{t_{n}}^{t_{n+1}}|\tilde{\psi}_{t_{n},\phi^{n}}(r)|_{\alpha}^{2}dr.
  \end{equation*}
  Then Gronwall's inequality leads to
  $
  \mathbf{E}|\tilde{\psi}_{t_{n},\phi^{n}}(t_{n+1})|_{\alpha}^{2}\leq (1+K\Delta t)\mathbf{E}|\phi^{n}|_{\alpha}^{2}.
  $
  Similarly, one has the estimations
  \begin{align*}
      &\mathbf{E}|\tilde{\psi}_{t_{n},\phi^{n}}(t_{n+1})-\hat{S}_{\Delta t}\phi^{n}|_{\alpha}^{2}\leq K\Delta t(1+\mathbf{E}|\phi^{n}|_{\alpha}^{2}),\\
      &\mathbf{E}|\mathbf{E}^{t_{n}}(\tilde{\psi}_{t_{n},\phi^{n}}(t_{n+1})-\hat{S}_{\Delta t}\phi^{n})|_{\alpha}^{2}\leq K\Delta t^{2}(1+\mathbf{E}|\phi^{n}|_{\alpha}^{2}).
  \end{align*}
  Suppose that $\mathbf{E}|\phi^{n}|_{\alpha}^{2}<\infty$, then
  \begin{align*}
    \mathbf{E}|\phi^{n+1}|_{\alpha}^{2}\leq K\mathbf{E}|\tilde{\psi}_{t_{n},\phi^{n}}(t_{n+1})|_{\alpha}^{2}+K\mathbf{E}|\phi^{n+1}-\tilde{\psi}_{t_{n},\phi^{n}}(t_{n+1})|_{\alpha}^{2}
    <\infty.
  \end{align*}
  Since $\mathbf{E}|\varphi|_{\alpha}^{2}<\infty$, we have proved the existence of $\mathbf{E}|\phi^{n}|_{\alpha}^{2}, \; \forall\, n\in \{0,1,\cdots,N\}$.
  Thus from
  \[\phi^{n+1}=\Big(\phi^{n+1}-\tilde{\psi}_{t_n,\phi^n}(t_{n+1})\Big)+\Big(\tilde{\psi}_{t_n,\phi^n}(t_{n+1})-\hat{S}_{\Delta t}\phi^n\Big)+\hat{S}_{\Delta t}\phi^n,\]
  we have
  \begin{align*}
      &\mathbf{E}|\phi^{n+1}|_{\alpha}^{2} \leq \mathbf{E}|\phi^{n}|_{\alpha}^{2}+2\mathbf{E}|\Upsilon(\tilde{\psi}_{t_{n},\phi^n}(r))-\Gamma(\phi^{n},\phi^{n+1},\Delta t,\Delta W_{n},T_{\Delta t})|_{\alpha}^{2}\\
    &  \quad+\mathbf{E}\langle\hat{S}_{\Delta t}\phi^{n},\mathbf{E}^{t_{n}}(\Upsilon(\tilde{\psi}_{t_{n},\phi^n}(r))-\Gamma(\phi^{n},\phi^{n+1},\Delta t,\Delta W_{n},T_{\Delta t}))\rangle_{\alpha}\\
   &  \quad +\mathbf{E}\langle\hat{S}_{\Delta t}\phi^{n},\mathbf{E}^{t_{n}}(\tilde{\psi}_{t_{n},\phi^{n}}(t_{n+1})-\hat{S}_{\Delta t}\phi^{n})\rangle_{\alpha}+2\mathbf{E}|\tilde{\psi}_{t_{n},\phi^{n}}(t_{n+1})-\hat{S}_{\Delta t}\phi^{n}|_{\alpha}^{2}\\
    & \leq(1+K\Delta t)\mathbf{E}|\phi^{n}|_{\alpha}^{2}+K\Delta t.\nonumber
  \end{align*}
  Hence the proof is finished by induction.
\end{proof}

Now we are in the position of the proof of Theorem \ref{convergence theorem}.
\begin{proof}
  Firstly we consider the mean-square error between $\tilde{\psi}(t_{n+1})$ and $\phi^{n+1}$. Since
  \begin{align*}
     &\tilde{\psi}(t_{n+1})-\phi^{n+1}=\tilde{\psi}_{t_{n},\tilde{\psi}(t_{n})}(t_{n+1})-\phi^{n+1}\\
      &=\Big(\tilde{\psi}_{t_{n},\tilde{\psi}(t_{n})}(t_{n+1})-\tilde{\psi}_{t_{n},\phi^{n}}(t_{n+1})\Big)+
      \Big(\tilde{\psi}_{t_{n},\phi^{n}}(t_{n+1})-\phi^{n+1}\Big)\\
      &=\Big(\tilde{\psi}_{t_{n},\tilde{\psi}(t_{n})}(t_{n+1})-\tilde{\psi}_{t_{n},\phi^{n}}(t_{n+1})\Big)+\Big(\Upsilon(\tilde{\psi}_{t_{n},\phi^{n}}(r))-\Gamma(\phi^{n},\phi^{n+1},\Delta t,\Delta W_{n},T_{\Delta t})\Big),\nonumber
  \end{align*}
  then
  \begin{align*}
     & \mathbf{E}|\tilde{\psi}(t_{n+1})-\phi^{n+1}|_{\alpha}^{2}\\
    &  =\mathbf{E}|\tilde{\psi}_{t_{n},\tilde{\psi}(t_{n})}(t_{n+1})-\tilde{\psi}_{t_{n},\phi^{n}}(t_{n+1})|_{\alpha}^{2}
     +\mathbf{E}|\Upsilon(\tilde{\psi}_{t_{n},\phi^{n}}(r))-\Gamma(\phi^{n},\phi^{n+1},\Delta t,\Delta W_{n},T_{\Delta t})|_{\alpha}^{2}\\
  &  \quad  +2\mathbf{E}\langle\tilde{\psi}_{t_{n},\tilde{\psi}(t_{n})}(t_{n+1})-\tilde{\psi}_{t_{n},\phi^{n}}(t_{n+1}),
\Upsilon(\tilde{\psi}_{t_{n},\phi^{n}}(r))-\Gamma(\phi^{n},\phi^{n+1},\Delta t,\Delta W_{n},T_{\Delta t})\rangle_{\alpha}.\nonumber
  \end{align*}
  From Proposition \ref{lemma3}, we have
  \begin{equation}
    \mathbf{E}|\tilde{\psi}_{t_{n},\tilde{\psi}(t_{n})}(t_{n+1})-\tilde{\psi}_{t_{n},\phi^{n}}(t_{n+1})|_{\alpha}^{2}
    \leq (1+K\Delta t)\mathbf{E}|\tilde{\psi}(t_{n})-\phi^{n}|_{\alpha}^{2},\nonumber
  \end{equation}
  and from condition (\ref{cond2}), we have
  \begin{equation}
    \mathbf{E}|\Upsilon(\tilde{\psi}_{t_{n},\phi^{n}}(r))-\Gamma(\phi^{n},\phi^{n+1},h,\Delta W_{n},T_{\Delta t})|_{\alpha}^{2}
    \leq K\Delta t^{2p+1}.\nonumber
  \end{equation}
 From Proposition \ref{lemma3}, we may split $\tilde{\psi}_{t_{n},\tilde{\psi}(t_{n})}(t_{n+1})-\tilde{\psi}_{t_{n},\phi^{n}}(t_{n+1})$ as $\hat{S}_{\Delta t}(\tilde{\psi}(t_{n})-\phi^{n})$ and $Z$,
  and use the trick of conditional expectation with respect to $\mathcal{F}_{t_{n}}$, then get
  \begin{align*}
     & \mathbf{E}\langle\tilde{\psi}_{t_{n},\tilde{\psi}(t_{n})}(t_{n+1})-\tilde{\psi}_{t_{n},\phi^{n}}(t_{n+1}),
\Upsilon(\tilde{\psi}_{t_{n},\phi^{n}}(r))-\Gamma(\phi^{n},\phi^{n+1},\Delta t,\Delta W_{n},T_{\Delta t})\rangle_{\alpha}\\
  &\leq K\Delta t\mathbf{E}|\tilde{\psi}(t_{n})-\phi^{n}|_{\alpha}^{2}+K\Delta t^{2p+1}.\nonumber
  \end{align*}
  Therefore
  $
    \mathbf{E}|\tilde{\psi}(t_{n+1})-\phi^{n+1}|_{\alpha}^{2}\leq(1+K\Delta t)\mathbf{E}|\tilde{\psi}(t_{n})-\phi^{n}|_{\alpha}^{2}+K\Delta t^{2p+1}.
 $
  Hence by Gronwall's lemma, we obtain
  $
    \mathbf{E}|\tilde{\psi}(t_{n+1})-\phi^{n+1}|_{\alpha}^{2} \leq K\Delta t^{2p}.
 $

  Next we consider the estimate between $\psi(t_{n+1})$ and $\tilde{\psi}(t_{n+1})$, since
  \begin{align*}
    \psi(t_{n+1})=&S(t_{n+1})\varphi+\mathbf{i}\sum_{k=0}^{n}\int_{t_{k}}^{t_{k+1}}S(t_{n+1}-r)F(\psi(r))dr\\
   & -\mathbf{i}\sum_{k=0}^{n}\int_{t_{k}}^{t_{k+1}}S(t_{n+1}-r)G(\psi(r))dW(r),
  \end{align*}
  and using \eqref{tilde psi} recurrently
  \begin{align*}
    \tilde{\psi}(t_{n+1})
   &=\hat{S}_{\Delta t}^{n+1}\varphi+\mathbf{i}\sum_{k=0}^{n}\int_{t_{k}}^{t_{k+1}}\hat{S}_{\Delta t}^{n-k}S(t_{k+1}-r)F(\tilde{\psi}(r))dr\\
   &\quad -\mathbf{i}\sum_{k=0}^{n}\int_{t_{k}}^{t_{k+1}}\hat{S}_{\Delta t}^{n-k}S(t_{k+1}-r)G(\tilde{\psi}(r))dW(r)\nonumber
  \end{align*}
  then
  \begin{align*}
      &\psi(t_{n+1})-\tilde{\psi}(t_{n+1})=(S(t_{n+1})-\hat{S}_{\Delta t}^{n+1})\varphi\\
     &+\mathbf{i}\sum_{k=0}^{n}\int_{t_{k}}^{t_{k+1}}\Big(S(t_{n+1}-r)F(\psi(r))-\hat{S}_{\Delta t}^{n-k}S(t_{k+1}-r)F(\tilde{\psi}(r))\Big)dr\\
      &-\mathbf{i}\sum_{k=0}^{n}\int_{t_{k}}^{t_{k+1}}\Big(S(t_{n+1}-r)G(\psi(r))-\hat{S}_{\Delta t}^{n-k}S(t_{k+1}-r)G(\tilde{\psi}(r))\Big)dW(r)\\
      :=& {\mathcal T}_{1}+{\mathcal T}_{2}+{\mathcal T}_{3}.\nonumber
  \end{align*}
  The estimation of ${\mathcal T}_1$ follows from Hypothesis \ref{hypo3}, that is $|{\mathcal T}_1|_{\alpha}\leq |S(t_{n+1})-\hat{S}_{\Delta t}^{n+1}|_{\mathcal{L}({\mathbb H}^{\beta},{\mathbb H}^{\alpha})}|\varphi|_{\beta}\leq K\Delta t^{q}$.
  By the Lipschitz property of $F$, we have
  \begin{align*}
    {\mathbf E}|{\mathcal T}_2|_{\alpha}^2\leq& K\sum_{k=0}^{n}\int_{t_k}^{t_{k+1}}{\mathbf E}\Big|S(t_{n+1}-r)F(\psi(r))-\hat{S}_{\Delta t}^{n-k}S(t_{k+1}-r)F(\tilde{\psi}(r))\Big|_{\alpha}^{2}dr \\
    \leq & K\sum_{k=0}^{n}\int_{t_k}^{t_{k+1}}{\mathbf E}\Big|S(t_{n+1}-r)\big(F(\psi(r))-F(\tilde{\psi}(r))\big)\Big|_{\alpha}^{2}dr \\
    &+K\sum_{k=0}^{n}\int_{t_k}^{t_{k+1}}{\mathbf E}\Big|\big(S(t_{n+1}-t_{k+1})-\hat{S}_{\Delta t}^{n-k}\big)S(t_{k+1}-r)F(\tilde{\psi}(r))\Big|_{\alpha}^{2}dr \\
    \leq & K\sum_{k=0}^{n}\int_{t_k}^{t_{k+1}}{\mathbf E}\Big|\psi(r)-\tilde{\psi}(r)\Big|_{\alpha}^{2}dr+K\Delta t^{2q}\sum_{k=0}^{n}\int_{t_k}^{t_{k+1}}{\mathbf E}\Big|F(\tilde{\psi}(r))\Big|_{\beta}^{2}dr\\
    \leq & K\Delta t^{2q}+ K\int_{t_0}^{t_{n+1}}{\mathbf E}\Big|\psi(r)-\tilde{\psi}(r)\Big|_{\alpha}^{2}dr.
  \end{align*}
  For term ${\mathcal T}_3$, we use the property \eqref{eq5} of stochastic integral to get
  \begin{align*}
    {\mathbf E}|{\mathcal T}_3|_{\alpha}^2=& \sum_{k=0}^{n}\int_{t_k}^{t_{k+1}}{\mathbf E}\Big|\Big(S(t_{n+1}-r)G(\psi(r))-\hat{S}_{\Delta t}^{n-k}S(t_{k+1}-r)G(\tilde{\psi}(r))\Big){\mathbf Q}^{\frac12}\Big|_{HS(U,{\mathbb H}^{\alpha})}^{2}dr \\
    \leq & K\Delta t^{2q}+ K\int_{t_0}^{t_{n+1}}{\mathbf E}\Big|\psi(r)-\tilde{\psi}(r)\Big|_{\alpha}^{2}dr.
  \end{align*}
  Hence combining them together and by Gronwall's lemma, we have
  \begin{equation*}
    \mathbf{E}|\psi(t_{n+1})-\tilde{\psi}(t_{n+1})|_{\alpha}^{2}\leq K\Delta t^{2q}.
  \end{equation*}
  At last, we estimate the mean-square error between $\psi(t_{n+1})$ and $\phi^{n+1}$ by triangle inequality
  \begin{align*}
      \mathbf{E}|\psi(t_{n+1})-\phi^{n+1}|_{\alpha}^{2} &\leq 2\mathbf{E}|\psi(t_{n+1})-\tilde{\psi}(t_{n+1})|_{\alpha}^{2}
      +2\mathbf{E}|\tilde{\psi}(t_{n+1})-\phi^{n+1}|_{\alpha}^{2}\\
      &\leq K\Delta t^{2q}+K\Delta t^{2p}
      \leq K\Delta t^{\min\{2q,2p\}}.\nonumber
  \end{align*}
  The proof of the convergence theorem is completed.
\end{proof}
\begin{remark}
  Consider the stochastic Schr\"{o}dinger equation in Stratonovich sense
  \vspace{-0.5em}
  \begin{equation*}
      \mathbf{i}d\psi+(A\psi+F(\psi))dt=G(\psi)\circ dW.
  \end{equation*}
  It is well known that this equation is equivalent to the following equation in the sense of It\^{o}
  \begin{equation}
      \mathbf{i}d\psi+(A\psi+\tilde{F}(\psi))dt=G(\psi)dW\nonumber
  \end{equation}
  where $\aleph_{{\mathbf Q}}=\sum_{i\in {\mathbb N}}({\mathbf Q}^{\frac12}e_{i}(x))^{2}$
  and $\tilde{F}(\psi)=F(\psi)+\frac{{\mathbf i}}{2}G^{\prime}(\psi)G(\psi)\aleph_{{\mathbf Q}}$.
  We assume that the coefficients $\tilde{F}$ and $G$ of the equation satisfy the hypothesis \ref{hypo1} and \ref{hypo2}, then it is not difficult to understand that Theorem \ref{convergence theorem}
remains true for equation understood in the sense of Stratonovich.
\end{remark}

\subsection{Mean-square convergence order of the midpoint scheme}\label{section}
Here we use the convergence theorem (Theorem \ref{convergence theorem}) to obtain the mean-square convergence order
of the symplectic semi-discrete scheme (\ref{symp scheme}) with $F$ satisfying the following condition:
$F$ is twice Fr\'echet differentiable
with bounded derivatives. Otherwise, truncation strategy could be employed as in \cite{Bouard2006,Liu}.

In order to obtain the mean-square convergence order, we require three extra regularities of the solution for equation \eqref{eq1}, namely, we set $\beta=\alpha+3$. In this case, we have the following estimations to operators \eqref{hats} (\cite{Bouard2006}), which are useful in the calculation below:
\begin{align}\label{operator}
  &|\hat{S}_{\Delta t}|_{\mathcal{L}(U_c,U_c)}\leq 1,\qquad  |T_{\Delta t}|_{\mathcal{L}(U_c,U_c)}\leq 1,\qquad
  |S(t_n)-\hat{S}_{\Delta t}^{n}|_{{\mathcal L}({\mathbb H}^{\alpha+3}, {\mathbb H}^{\alpha})}\leq K\Delta t,\\
 & |\hat{S}_{\Delta t}-I|_{{\mathcal L}({\mathbb H}^{\alpha+2}, {\mathbb H}^{\alpha})}\leq K\Delta t,\qquad
  |T_{\Delta t}-I|_{{\mathcal L}({\mathbb H}^{\alpha+2}, {\mathbb H}^{\alpha})}\leq K\Delta t.\nonumber
\end{align}
The last inequality in the first line of \eqref{operator} means that $q=1$.

Let $\xi_{i}$, $i\in{\mathbb N}$ be $\mathcal{N}(0,1)$-distributed random variable. Due to the implicity in diffusion and the possibility of the noise could become unbounded for any arbitrary small time step size,
 as in \cite{Milstein1995} we truncate the noise $\Delta W_{n}=\sum_{i\in{\mathbb N}} \Delta \beta_{i}{\bf Q}^{\frac12}e_{i}
 =\sqrt{\Delta t}\sum_{i\in{\mathbb N}} \xi_{i}{\mathbf Q}^{\frac12}e_{i}$ in \eqref{symp scheme}
 by another random variable \[\Delta \bar{W}_{n}=\sqrt{\Delta t}\sum_{i\in{\mathbb N}} \zeta_{i}{\mathbf Q}^{\frac12}e_{i}.\]
For $A_{\Delta t}=\sqrt{2k|\ln \Delta t|}$ ($k\geq 1$), let
\begin{equation*}
  \zeta_{i}=
  \begin{cases}
    \xi_{i}\quad  |\xi_{i}|\leq A_{\Delta t}, \\
    A_{\Delta t}\quad \xi_{i}> A_{\Delta t},\\
    -A_{\Delta t}\quad  \xi_{i}< -A_{\Delta t}.
  \end{cases}
\end{equation*}
Here, the role of parameter $k$ is such that ${\bf E}(\zeta_i-\xi_i)^2\leq \Delta t^k$ and
${\bf E}\big(\zeta_i^2-\xi_i^2\big)\leq (1+2A_{\Delta t})\Delta t^k$.
Thus for ${\mathbf Q}^{\frac12}\in HS(U, {\mathbb H}^{\alpha})$, we have
\begin{equation}\label{noise}
\begin{split}
&{\mathbf E}|\Delta \bar{W}_{n}-\Delta W_{n}|^2_{\alpha}\leq K\Delta t^{k+1},\\
 & {\mathbf E}|(\Delta\bar{W}_{n})^2-(\Delta W_{n})^2|^{2}_{\alpha}\leq K\Delta t^{k+2}(1+A_{\Delta t}+A_{\Delta t}^2)
  \leq K\Delta t^{k+1},
\end{split}
\end{equation}
where we use the property $\Delta t(1+A_{\Delta t}+(A_{\Delta t})^2)\leq 1$ for sufficient small $\Delta t$.

For the following analysis, we require $k=2$ and have the following results.
\begin{theorem}
  The midpoint scheme is of order $1$ in the mean-square convergence sense, i.e., for $\alpha>\frac{d}{2}$ with $d$ being the dimension of the problem and for sufficient small $\Delta t$, we have
  \[\Big({\mathbf E}|\psi(t_{n})-\phi^{n}|_{\alpha}^2\Big)^{\frac12}\leq K\Delta t \qquad \forall n=0,1,\cdots, N.\]
\end{theorem}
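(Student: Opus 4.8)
The plan is to derive this from the abstract convergence theorem, Theorem~\ref{convergence theorem}, applied with $q=1$ and $p=1$. First I would verify its hypotheses. Hypothesis~\ref{hypo3} with $q=1$ is exactly the third estimate in \eqref{operator}. For Hypothesis~\ref{hypo1}, since $\alpha>\frac d2$ the Sobolev space $\mathbb{H}^{\alpha}$ is a Banach algebra, so pointwise multiplication is continuous $\mathbb{H}^{\alpha}\times\mathbb{H}^{\alpha}\to\mathbb{H}^{\alpha}$; together with the assumed boundedness of the first two Fr\'echet derivatives of $F$ (or of a truncation of it) and with ${\mathbf Q}^{\frac12}\in HS(U;\mathbb{H}^{\alpha})$, this makes $F$ and $G(\cdot){\mathbf Q}^{\frac12}=\varepsilon(\cdot){\mathbf Q}^{\frac12}$ globally Lipschitz in the required norms. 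The same reasoning at regularity $\alpha+3$ lets Lemma~\ref{lemma:num} be applied there, giving (with $\varphi\in L^{2}(\Omega;\mathbb{H}^{\alpha+3})$ from Hypothesis~\ref{hypo2}) the uniform a priori bounds $\mathbf{E}|\phi^{n}|_{\alpha+3}^{2}\le K$ and $\mathbf{E}\sup_{r\in[t_{n},t_{n+1}]}|\tilde\psi_{t_{n},\phi^{n}}(r)|_{\alpha+3}^{2}\le K$, which I will use throughout; existence and $\mathcal{F}_{t_{n}}$-adaptedness of $\phi^{n}$ come from Proposition~\ref{pro4}. It remains to check the one-step conditions \eqref{cond1}--\eqref{cond2} with $p=1$; by the remark after Theorem~\ref{convergence theorem} the relevant It\^o drift is $\tilde F(\psi)=F(\psi)+\frac{\mathbf{i}}{2}\varepsilon^{2}\psi\aleph_{{\mathbf Q}}$ and the increment functional of the midpoint scheme \eqref{symp scheme} with truncated noise is
\[
  \Gamma=\mathbf{i}\,\Delta t\,T_{\Delta t}F(t_{n+\frac12},\phi^{n+\frac12})-\mathbf{i}\,\varepsilon\,T_{\Delta t}\phi^{n+\frac12}\Delta\bar{W}_{n}.
\]

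I would then split $\Upsilon(\tilde\psi_{t_{n},\phi^{n}}(r))-\Gamma$ into a drift-quadrature piece $\mathcal{D}_{1}=\mathbf{i}\int_{t_{n}}^{t_{n+1}}S(t_{n+1}-r)F(\tilde\psi(r))\,dr-\mathbf{i}\Delta t\,T_{\Delta t}F(t_{n+\frac12},\phi^{n+\frac12})$, an It\^o-correction piece $\mathcal{D}_{2}=\mathbf{i}\int_{t_{n}}^{t_{n+1}}S(t_{n+1}-r)\tfrac{\mathbf{i}}{2}\varepsilon^{2}\tilde\psi(r)\aleph_{{\mathbf Q}}\,dr$, and a diffusion piece $\mathcal{D}_{3}=-\mathbf{i}\varepsilon\int_{t_{n}}^{t_{n+1}}S(t_{n+1}-r)\tilde\psi(r)\,dW(r)+\mathbf{i}\varepsilon\,T_{\Delta t}\phi^{n+\frac12}\Delta\bar{W}_{n}$, and estimate each. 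In $\mathcal{D}_{1}$, replacing $S(t_{n+1}-r)$ and $T_{\Delta t}$ by the identity costs $O(\Delta t^{2})$ in $L^{2}(\Omega;\mathbb{H}^{\alpha})$ by the $O(\Delta t)$ bounds for $\hat{S}_{\Delta t}-I$, $T_{\Delta t}-I$ in \eqref{operator} and the analogous one for $S(t)-I$ on $\mathcal{L}(\mathbb{H}^{\alpha+2},\mathbb{H}^{\alpha})$, using the $\mathbb{H}^{\alpha+3}$-moment bounds above; the remaining $\mathbf{i}\big(\int_{t_{n}}^{t_{n+1}}F(\tilde\psi(r))\,dr-\Delta t\,F(t_{n+\frac12},\phi^{n+\frac12})\big)$ is controlled by Taylor-expanding $F$ around $\phi^{n}$ and exploiting the midpoint-rule structure: its leading correction is linear in the Wiener increments, hence conditionally mean zero, so it contributes $O(\Delta t^{3/2})$ to \eqref{cond2} and $O(\Delta t^{2})$ to \eqref{cond1}. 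The piece $\mathcal{D}_{2}$ is $O(\Delta t)$ in $L^{2}$ with conditional expectation $-\tfrac{\varepsilon^{2}}{2}\Delta t\,\phi^{n}\aleph_{{\mathbf Q}}+O(\Delta t^{2})$.

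The crux is $\mathcal{D}_{3}$. To leading order its two summands are $\mp\mathbf{i}\varepsilon\phi^{n}\Delta W_{n}$, so the noise-linear parts cancel and one must carry the expansions one order further. Iterating \eqref{tilde psi} gives $\tilde\psi(r)-\phi^{n}\approx-\mathbf{i}\varepsilon\phi^{n}(W(r)-W(t_{n}))$, so the It\^o product rule $\int X\,dX=\tfrac12(X^{2}-\langle X\rangle)$ with $\langle W\rangle_{t}=t\,\aleph_{{\mathbf Q}}$ gives
\[
  -\mathbf{i}\varepsilon\!\int_{t_{n}}^{t_{n+1}}\!S(t_{n+1}-r)\tilde\psi(r)\,dW(r)=-\mathbf{i}\varepsilon\phi^{n}\Delta W_{n}-\tfrac{\varepsilon^{2}}{2}\phi^{n}(\Delta W_{n})^{2}+\tfrac{\varepsilon^{2}}{2}\Delta t\,\phi^{n}\aleph_{{\mathbf Q}}+O(\Delta t^{3/2});
\]
on the numerical side, solving the implicit relation $\phi^{n+\frac12}=\tfrac12(I+\hat{S}_{\Delta t})\phi^{n}+\tfrac12\Gamma$ for its noise-linear part gives $\phi^{n+\frac12}-\phi^{n}\approx-\tfrac{\mathbf{i}}{2}\varepsilon\phi^{n}\Delta\bar{W}_{n}$, whence $\mathbf{i}\varepsilon T_{\Delta t}\phi^{n+\frac12}\Delta\bar{W}_{n}=\mathbf{i}\varepsilon\phi^{n}\Delta\bar{W}_{n}+\tfrac{\varepsilon^{2}}{2}\phi^{n}(\Delta\bar{W}_{n})^{2}+O(\Delta t^{3/2})$ (the commutative-noise identity $G'(\psi)(G(\psi)u)=\varepsilon^{2}\psi u$ is what makes these $(\Delta W_{n})^{2}$-terms scalar multiples of $\phi^{n}$). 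Adding $\mathcal{D}_{2}+\mathcal{D}_{3}$, the two $\tfrac{\varepsilon^{2}}{2}\Delta t\,\phi^{n}\aleph_{{\mathbf Q}}$ terms cancel — this is precisely why the symplectic (Stratonovich) midpoint scheme captures the correct It\^o drift — leaving $-\mathbf{i}\varepsilon\phi^{n}(\Delta W_{n}-\Delta\bar{W}_{n})-\tfrac{\varepsilon^{2}}{2}\phi^{n}\big((\Delta W_{n})^{2}-(\Delta\bar{W}_{n})^{2}\big)+O(\Delta t^{3/2})$. By \eqref{noise} with $k=2$, the algebra property and the independence of $\phi^{n}$ from the increments, these truncation terms are $O(\Delta t^{3/2})$ in $L^{2}(\Omega;\mathbb{H}^{\alpha})$, and their conditional expectations vanish, respectively are $O((1+A_{\Delta t})\Delta t^{3})=o(\Delta t^{2})$, since $\xi_{i},\zeta_{i}$ are symmetric and $\mathbf{E}(\xi_{i}^{2}-\zeta_{i}^{2})\le(1+2A_{\Delta t})\Delta t^{2}$; the remaining $O(\Delta t^{3/2})$ terms have conditionally mean-zero leading parts, so conditioning reduces them to $O(\Delta t^{2})$.

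Collecting the three pieces shows that the midpoint scheme satisfies \eqref{cond1} with exponent $p+1=2$ and \eqref{cond2} with exponent $p+\tfrac12=\tfrac32$, i.e.\ with $p=1$; since also $q=1$, Theorem~\ref{convergence theorem} yields $\big(\mathbf{E}|\psi(t_{n})-\phi^{n}|_{\alpha}^{2}\big)^{1/2}\le K\Delta t^{\min\{q,p\}}=K\Delta t$, which is the assertion. The hard part will be the bookkeeping in $\mathcal{D}_{3}$: extracting from a single implicit midpoint step both the $(\Delta W_{n})^{2}$-term and the Stratonovich--It\^o correction $\tfrac{\varepsilon^{2}}{2}\Delta t\,\phi^{n}\aleph_{{\mathbf Q}}$, verifying that the leftover is genuinely $O(\Delta t^{3/2})$ (and $O(\Delta t^{2})$ after conditioning on $\mathcal{F}_{t_{n}}$), and keeping every constant independent of $n$ and $\Delta t$ — which is exactly where the $\mathbb{H}^{\alpha+3}$ a priori bounds, the commutative-noise structure, and the careful treatment of the $A_{\Delta t}=\sqrt{2k|\ln\Delta t|}$ truncation factors come into play.
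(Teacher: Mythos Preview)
Your strategy is essentially the paper's: apply Theorem~\ref{convergence theorem} with $q=p=1$, using the operator bounds \eqref{operator} for $q$ and verifying \eqref{cond1}--\eqref{cond2} by expanding the drift and diffusion pieces, cancelling the Stratonovich--It\^o correction against the implicit $(\Delta W_n)^2$ term, and controlling the residual truncation via \eqref{noise} with $k=2$. Your decomposition $\mathcal{D}_1+\mathcal{D}_2+\mathcal{D}_3$ simply regroups the paper's $\mathcal{A},\mathcal{B}$ (for \eqref{cond1}) and $\mathcal{L},\mathcal{M},\mathcal{N}$ (for \eqref{cond2}); the key cancellations and the use of commutative noise are the same.

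There is, however, one genuine gap. You obtain the uniform moment bound $\mathbf{E}|\phi^n|_{\alpha+3}^2\le K$ by invoking Lemma~\ref{lemma:num}, but that lemma is proved \emph{under} the one-step conditions \eqref{cond1}--\eqref{cond2} (the paper says so explicitly just before its statement, and its proof uses both). Since you then use these a~priori bounds to verify \eqref{cond1}--\eqref{cond2}, the argument is circular. The paper avoids this by first establishing $\mathbf{E}|\phi^n|_\alpha^2\le K$ \emph{directly} for the midpoint scheme with truncated noise: iterating \eqref{symp scheme}, isolating the implicit piece $(\phi^{k+1}-\hat S_{\Delta t}\phi^k)\Delta\bar W_k$, and exploiting $|\Delta\bar W_k|\le K\sqrt{\Delta t}\,A_{\Delta t}$ so that for $\Delta t$ small enough $K_0\Delta t(A_{\Delta t})^2\le\frac12$ and the implicit term can be absorbed (this is precisely why the noise must be truncated). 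The same direct argument, run at regularity $\alpha+3$, yields the higher-order bounds you need; you should replace the appeal to Lemma~\ref{lemma:num} by this computation. Once that is done, the rest of your outline goes through and matches the paper.
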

\begin{proof}
First of all, we show that the numerical solution \eqref{symp scheme} with truncated noise has bounded moments. In fact,
\begin{align}\label{eq6}
  {\mathbf E}|\phi^{n+1}|_{\alpha}^2=&{\mathbf E}\Big|\hat{S}_{\Delta t}^{n+1}\varphi+{\mathbf i}\Delta t\sum_{k=0}^{n}\hat{S}_{\Delta t}^{n-k}T_{\Delta t}F(t_{k+\frac12},\phi^{k+\frac12})\\
  &\quad-{\mathbf i}\varepsilon \sum_{k=0}^{n}\hat{S}_{\Delta t}^{n-k}T_{\Delta t}\Big(\frac12(I+\hat{S}_{\Delta t})\phi^{k}+\frac12(\phi^{k+1}-\hat{S}_{\Delta t}\phi^{k})\Big)\Delta\bar{W}_{k}\Big|_{\alpha}^2\nonumber\\
  \leq & K{\mathbf E}|\varphi|_{\alpha}^2+K\Delta t\sum_{k=0}^{n}{\mathbf E}|\phi^{k+\frac12}|_{\alpha}^{2}+Kn\sum_{k=0}^{n}\big|(\phi^{k+1}-\hat{S}_{\Delta t}\phi^{k})\Delta\bar{W}_{k}\big|_{\alpha}^2.\nonumber
\end{align}
Since
\begin{align*}
  &{\mathbf E}\big|(\phi^{k+1}-\hat{S}_{\Delta t}\phi^{k})\Delta\bar{W}_{k}\big|_{\alpha}^2\\
  &={\mathbf E}\Big|{\mathbf i}\Delta tT_{\Delta t}F(t_{k+\frac12},\phi^{k+\frac12})\Delta\bar{W}_{k}-{\mathbf i}\varepsilon T_{\Delta t}\phi^{k}(\Delta\bar{W}_{k})^2-\frac{{\mathbf i}}{2}\varepsilon T_{\Delta t}\big((\phi^{k+1}-\hat{S}_{\Delta t}\phi^{k})\Delta\bar{W}_{k}\big)\Delta\bar{W}_{k}\Big|_{\alpha}^2\\
  &\leq K\Delta t^2\Delta t(A_{\Delta t})^2{\mathbf E}|\phi^{k+\frac12}|_{\alpha}^2+K\Delta t^2{\mathbf E}|\phi^{k}|_{\alpha}^2
  +K_{0}\Delta t (A_{\Delta t})^2{\mathbf E}\big|(\phi^{k+1}-\hat{S}_{\Delta t}\phi^{k})\Delta\bar{W}_{k}\big|_{\alpha}^2.
\end{align*}
There exists $\Delta t^{*}$ such that $\forall\, \Delta t<\Delta t^{*}$ one has $K_{0}\Delta t (A_{\Delta t})^2\leq \frac12$, therefore
\begin{equation}\label{eq7}
  {\mathbf E}\big|(\phi^{k+1}-\hat{S}_{\Delta t}\phi^{k})\Delta\bar{W}_{k}\big|_{\alpha}^2\leq K\Delta t^2{\mathbf E}|\phi^{k+\frac12}|_{\alpha}^2.
\end{equation}
Substituting \eqref{eq7} into the right-hand side of \eqref{eq6}, and by Gronwall inequality one arrives at
$${\mathbf E}|\phi^{n}|_{\alpha}^2\leq K(1+{\mathbf E}|\varphi|_{\alpha}^2)\leq K.$$
From (\ref{mild solution of S equation}), we know that
\begin{align*}
  \Upsilon(\tilde{\psi}_{t_{n},\phi^n}(r))=&\mathbf{i}\int_{t_{n}}^{t_{n+1}}S(t_{n+1}-r)F(r,\tilde{\psi}_{t_{n},\phi^n}(r))dr
  -\mathbf{i}\varepsilon \int_{t_{n}}^{t_{n+1}}S(t_{n+1}-r)\tilde{\psi}_{t_{n},\phi^n}(r)dW(r)\\
  &-\frac{\varepsilon^{2}}{2}\int_{t_{n}}^{t_{n+1}}S(t_{n+1}-r)\tilde{\psi}_{t_{n},\phi^n}(r)\aleph_{\mathbf Q}dr,\nonumber
\end{align*}
and from (\ref{symp scheme}) with truncated noise we have
\begin{align*}
  \Gamma(\phi^n,\phi^{n+1},\Delta t,\Delta \bar{W}_{n},T_{\Delta t})
  =&\mathbf{i}\Delta tT_{\Delta t}F(t_{n+\frac12},\phi^{n+\frac12})
-\mathbf{i}\varepsilon T_{\Delta t}\phi^n\Delta \bar{W}_{n}-\frac{{\mathbf i}\varepsilon}{2}T_{\Delta t}(\phi^{n+1}-\phi^n)\Delta \bar{W}_{n}.
\end{align*}
Then
\begin{align*}
    \mathbf{E}^{t_n}&\Big(\Upsilon(\tilde{\psi}_{t_{n},\phi^n}(r))-\Gamma(\phi^n,\phi^{n+1},\Delta t,\Delta \bar{W}_{n},T_{\Delta t})\Big)\\
    =&\mathbf{E}^{t_n}\Big(\mathbf{i}\int_{t_{n}}^{t_{n+1}}S(t_{n+1}-r)F(r,\tilde{\psi}_{t_{n},\phi^n}(r))dr-\mathbf{i}\Delta tT_{\Delta t}F(t_{n+\frac12},\phi^{n+\frac12})\Big)\\
   &+\mathbf{E}^{t_n}\Big(\frac{\mathbf{i}\varepsilon}{2}T_{\Delta t}(\phi^{n+1}-\phi^n)\Delta \bar{W}_{n}-\frac{\varepsilon^{2}}{2}\int_{t_{n}}^{t_{n+1}}S(t_{n+1}-r)\tilde{\psi}_{t_{n},\phi^n}(r)\aleph_{{\mathbf Q}}dr\Big)\\
    :=&\mathcal{A}+\mathcal{B}.\nonumber
\end{align*}
We split $\mathcal{A}$ further
\begin{align}
    \mathcal{A}=&\mathbf{i}\mathbf{E}^{t_n}\int_{t_{n}}^{t_{n+1}}(S(t_{n+1}-r)-T_{\Delta t})F(r,\tilde{\psi}_{t_{n},\phi^n}(r))dr\\
    &+\mathbf{i}\mathbf{E}^{t_n}\int_{t_{n}}^{t_{n+1}}T_{\Delta t}(F(r,\tilde{\psi}_{t_{n},\phi^n}(r))-F(t_{n+\frac12},\phi^n))dr\nonumber\\
    &-\mathbf{i}\Delta tT_{\Delta t}\mathbf{E}^{t_n}(F(t_{n+\frac12},\phi^{n+\frac12})-F(t_{n+\frac12},\phi^n))\nonumber\\
    :=&\mathcal{A}^{1}+\mathcal{A}^{2}+\mathcal{A}^{3}.\nonumber
\end{align}
From \eqref{operator}, we have $
  {\mathbf E}|\mathcal{A}^{1}|_{\alpha}^2\leq K\Delta t^{4}.
$
In addition, via the boundedness of derivatives of $F$ with respect to $\psi$,
we write
\begin{align}
    F(r,\tilde{\psi}_{t_{n},\phi^n}(r))-F(t_{n+\frac12},\phi^n)
    &=\Big(F(r,\tilde{\psi}_{t_{n},\phi^n}(r))-F(r,\phi^n)\Big)+\Big(F(r,\phi^n)-F(t_{n+\frac12},\phi^n)\Big)\nonumber\\
    &=\frac{\partial F}{\partial \psi}(r,\phi^n)(\tilde{\psi}_{t_{n},\phi^n}(r)-\phi^n)+\rho_{1}+\rho_{2},\nonumber
\end{align}
where $|\rho_{1}|_{\alpha}\leq K|\tilde{\psi}_{t_{n},\phi^n}(r)-\phi^n|_{\alpha}^{2}\text{ and }{\mathbf E}|\rho_{2}|^{2}_{\alpha}\leq K\Delta t^2.$
It is not difficult to obtain the estimate
\[\int_{t_{n}}^{t_{n+1}}{\mathbf E}|\mathbf{E}^{t_n}(\tilde{\psi}_{t_{n},\phi^n}(r)-\phi^n)|^2_{\alpha}dr
\leq K\Delta t^{3}.\]
The above implies that ${\mathbf E}|\mathcal{A}^{2}|_{\alpha}^2\leq K\Delta t^{4}.$
Similarly as the estimate of $\mathcal{A}^{2}$, one has
$
  {\mathbf E}|\mathcal{A}^{3}|_{\alpha}^2\leq K\Delta t^{4}.
$
The estimate of $\mathcal{B}$ is more technical. First inserting the expression of $\phi^{n+1}-\phi^n$ into $\mathcal{B}$, we have
\begin{align*}
  \mathcal{B}=&\mathbf{E}^{t_n}\Big[\frac{\mathbf{i}\varepsilon}{2}T_{\Delta t}\Big((\hat{S}_{\Delta t}-I)\phi^n+\mathbf{i}\Delta tT_{\Delta t}F(t_{n+\frac12},\phi^{n+\frac12})-\mathbf{i}\varepsilon T_{\Delta t}\phi^n\Delta \bar{W}_{n}\\
  &-\frac{\mathbf{i}\varepsilon}{2}T_{\Delta t}(\phi^{n+1}-\phi^n)\Delta \bar{W}_{n}\Big)\Delta \bar{W}_{n}\nonumber
  -\frac{\varepsilon^{2}}{2}\int_{t_{n}}^{t_{n+1}}S(t_{n+1}-r)\tilde{\psi}_{t_{n},\phi^n}(r)\aleph_{{\mathbf Q}}dr\Big]\nonumber\\
    =&-\frac{\varepsilon}{2}\Delta t T_{\Delta t}^{2}\mathbf{E}^{t_n}\Big(F(t_{n+\frac12},\phi^{n+\frac12})\Delta \bar{W}_{n}\Big)
  +\frac{\varepsilon^{2}}{4}T_{\Delta t}^{2}\mathbf{E}^{t_n}\Big((\phi^{n+1}-\phi^n)(\Delta \bar{W}_{n})^{2})\Big)\nonumber\\
   & +\frac{\varepsilon^{2}}{2}\Big[T_{\Delta t}^{2}\phi^n{\mathbf E}^{t_n}(\Delta\bar{W}_n)^2-\int_{t_{n}}^{t_{n+1}}\mathbf{E}^{t_n}
    S(t_{n+1}-r)\tilde{\psi}_{t_{n},\phi^n}(r)\aleph_{{\mathbf Q}}dr\Big]\nonumber\\
    :=& \mathcal{B}^{1}+\mathcal{B}^{2}+\mathcal{B}^{3}.\nonumber
\end{align*}
We write $F(t_{n+\frac12},\phi^{n+\frac12})=F(t_{n+\frac12},\phi^{n})+\frac12\frac{\partial F}{\partial \psi}(t_{n+\frac12},\phi^{n})(\phi^{n+1}-\phi^{n})+\rho_1$ and use \eqref{eq7} to obtain
$
  {\mathbf E}|\mathcal{B}^{1}|^2_{\alpha}\leq K\Delta t^{4}.
$
Inserting the expression of $\phi^{n+1}-\phi^n$ into the term $\mathcal{B}^{2}$, similarly, one can obtain that
$
  \mathbf{E}|\mathcal{B}^{2}|_{\alpha}^{2}\leq K\Delta t^{4}.\nonumber
$
We split $\mathcal{B}^{3}$ further
\begin{align*}
    \mathcal{B}^{3}
    =&\frac{\varepsilon^{2}}{2}T_{\Delta t}\phi^n{\mathbf E}^{t_n}\Big((\Delta \bar{W}_n)^2-(\Delta W_{n})^2\Big)
    +\frac{\varepsilon^{2}}{2}{\mathbf E}^{t_n}\int_{t_n}^{t_{n+1}}\Big(T_{\Delta t}^2\phi^n-S(t_{n+1}-r)\tilde{\psi}_{t_n,\phi^n}(r)\Big)\aleph_{\mathbf Q}dr
\end{align*}
where the first term could be bounded by the second inequality in \eqref{noise}, and the second term on the above equality could be estimated similarly by inserting the expression of $\tilde{\psi}_{t_n,\phi^n}(r)-\phi^{n}$ into it, which lead to
$
  \mathbf{E}|\mathcal{B}^{3}|_{\alpha}^{2}\leq K\Delta t^{4}.
$
Therefore, we have
\[{\mathbf E}\Big|\mathbf{E}^{t_n}\Big(\Upsilon(\tilde{\psi}_{t_{n},\phi^n}(r))-\Gamma(\phi^n,\phi^{n+1},\Delta t,\Delta \bar{W}_{n},T_{\Delta t})\Big)\Big|_{\alpha}^2 \leq K\Delta t^{4}.\]
Next, let's compute the value of $\mathbf{E}|\Upsilon(\tilde{\psi}_{t_{n},\phi^n}(r))-\Gamma(\phi^n,\phi^{n+1},\Delta t,\Delta \bar{W}_{n},T_{\Delta t})|_{\alpha}^{2}$.
\begin{align*}
    &\Upsilon(\tilde{\psi}_{t_{n},\phi^n}(r))-\Gamma(\phi^n,\phi^{n+1},\Delta t,\Delta \bar{W}_{n},T_{\Delta t})\\
    &=\mathbf{i}\int_{t_{n}}^{t_{n+1}}\Big(S(t_{n+1}-r)F(r,\tilde{\psi}_{t_{n},\phi^n}(r))-T_{\Delta t}F(t_{n+\frac12},\phi^{n+\frac12})\Big)dr\\
   &-\mathbf{i}\varepsilon \int_{t_{n}}^{t_{n+1}}(S(t_{n+1}-r)-T_{\Delta t})\tilde{\psi}_{t_{n},\phi^n}(r)dW(r)\\
    &-\Big[\mathbf{i}\varepsilon \int_{t_{n}}^{t_{n+1}}T_{\Delta t}(\tilde{\psi}_{t_{n},\phi^n}(r)-\phi^n)dW(r)
    +\frac{\varepsilon^{2}}{2}\int_{t_{n}}^{t_{n+1}}S(t_{n+1}-r)\tilde{\psi}_{t_{n},\phi^n}(r)\aleph_{{\mathbf Q}}dr\\
    &\quad\quad-\frac{\mathbf{i}\varepsilon}{2}T_{\Delta t}(\phi^{n+1}-\phi^n)\Delta \bar{W}_{n}+{\mathbf i}\varepsilon T_{\Delta t}\phi^{n}\big(\Delta W_n-\Delta\bar{W}_n\big)\Big]\\
   :=&\mathcal{L}+\mathcal{M}-\mathcal{N}.
\end{align*}
The estimation of ${\mathcal L}$ is similar as that of term ${\mathcal A}$, that is $\mathbf{E}|\mathcal{L}|_{\alpha}^{2}\leq K\Delta t^{3}.$
By \eqref{eq5} and \eqref{operator}, we know that $\mathbf{E}|\mathcal{M}|_{\alpha}^{2}\leq K\Delta t^{3}.$
Inserting the expression of $\tilde{\psi}(r)-\phi^n$ and $\phi^{n+1}-\phi^n$ into $\mathcal{N}$ and splitting $\mathcal{N}$ further
\begin{align*}
    \mathcal{N}=&\mathbf{i}\varepsilon\int_{t_{n}}^{t_{n+1}}T_{\Delta t}\Big((\hat{S}_{r-t_{n}}-I)\phi^n+\mathbf{i}\int_{t_{n}}^{r}S(r-\rho)\tilde{F}(\tilde{\psi}_{t_{n},\phi^n}(\rho))d\rho\Big)dW(r)\\
    &-\frac{\mathbf{i}\varepsilon}{2}T_{\Delta t}\Big((\hat{S}_{\Delta t}-I)\phi^n+\mathbf{i}\Delta t T_{\Delta t}F(t_{n+\frac12},\phi^{n+\frac12})\Big)\Delta \bar{W}_{n}+{\mathbf i}\varepsilon T_{\Delta t}\phi^{n}\big(\Delta W_n-\Delta\bar{W}_n\big)\\
    &+\varepsilon^{2}\int_{t_{n}}^{t_{n+1}}\int_{t_{n}}^{r}T_{\Delta t}S(r-\rho)\tilde{\psi}_{t_{n},\phi^n}(\rho)dW(\rho)dW(r)
    +\frac{\varepsilon^{2}}{2}\int_{t_{n}}^{t_{n+1}}S(t_{n+1}-r)\tilde{\psi}_{t_{n},\phi^n}(r)\aleph_{{\mathbf Q}}dr\\
    &-\frac{\varepsilon^2}{2}T_{\Delta t}^2\phi^{n}(\Delta W_n)^2+\frac{\varepsilon^2}{2}T_{\Delta t}^2\phi^{n}\Big((\Delta W_n)^2-(\Delta \bar{W}_n)^2\Big)-\frac{\varepsilon^2}{4}T_{\Delta t}^2(\phi^{n+1}-\phi^{n})(\Delta\bar{W}_n)^2
\end{align*}
Special attention should be paid to the third line and the first term in the last line of the above equality, since other terms could be estimated as before.
In fact, $$
\frac{\varepsilon^2}{2}T_{\Delta t}^2\phi^{n}(\Delta W_n)^2=\varepsilon^2\int_{t_n}^{t_{n+1}}\int_{t_n}^{r}T_{\Delta t}^2\phi^{n}dW(\rho)dW(r)+\frac{\varepsilon^2}{2}T_{\Delta t}^2\phi^{n}\aleph_{\mathbf Q}\Delta t
$$
leads to
$
  \mathbf{E}|\mathcal{N}|_{\alpha}^{2}\leq K\Delta t^{3}.\nonumber
$
That is \[\Big( \mathbf{E}|\Upsilon(\tilde{\psi}_{t_{n},\phi^n}(r))-\Gamma(\phi^n,\phi^{n+1},\Delta t,\Delta \bar{W}_{n},T_{\Delta t})|_{\alpha}^{2}\Big)^{\frac12}\leq K\Delta t^{\frac32}.\]
Therefore the mean-square order of midpoint scheme is of $1$ according to Theorem \ref{convergence theorem}.
\end{proof}
\begin{remark}
For stochastic Schr\"odinger equation in Stratonovich sense, the mean-square convergence order of the semi-discrete midpoint scheme is 1 under appropriate assumptions. It seems like the same as the case of stochastic ordinary differential equations. However, the convergence order here depends on the values of $p$ and $q$, where $p$ and $q$ are from estimations of the one-step deviation between exact solution and numerical solution. As we take parameters $\beta=\alpha+3$, which require three more regularity conditions on the solution $\psi$, the estimation of operators is $|S(t_{k})-\hat{S}_{\Delta t}^{k}|_{\mathcal{L}(H^{\beta},H^{\alpha})}\leq K\Delta t$ with $q=1$. Together with $p=1$, we have that the convergence order of the semi-discrete midpoint scheme is 1. If we put less regularity on the solution, which means $q<1$, then the convergence order is less than 1. For the general stochastic Runge-Kutta methods in temporal direction, the mean-square convergence order is an open problem.
\end{remark}
\section{Numerical experiments}
In the Section \ref{section}, we showed the  convergence in the mean-square sense of midpoint scheme \eqref{symp scheme} with spatially regular noise and under certain assumptions the order is of $1$. The following example is chosen to study convergence order computationally of the midpoint scheme \eqref{symp scheme} to solve the stochastic cubic Schr\"odinger equation on
$[-1,1]\times[0,T]$
\begin{align}\label{eq501}
 & \mathbf{i}d\psi(x,t)+\Big(\partial_{xx}\psi+|\psi|^{2}\psi(x,t)\Big)dt=\varepsilon\psi(x,t)\circ dW(t),\\
  &\psi(x,0)=\sin(\pi x).\nonumber
\end{align}
Let $T=\frac14$. For integer $M$, and $\{\beta_{\ell};1\leq \ell\leq M\}$ a
family of independent ${\mathbb R}$-valued Wiener processes, consider the real-valued Wiener process $W(t)=\sum_{\ell=1}^{M}\frac{1}{\ell}\sin(\pi\ell x)\beta_{\ell}(t)$, and $\varepsilon=\sqrt{2}$ in \eqref{eq501}.
 We use the midpoint scheme \eqref{symp scheme} in the temporal direction and finite difference in the spatial direction for the numerical approximation. Let $I_{\Delta t}=\{t_n;0\leq n\leq N\}$ be the uniform
discretization of $[0,\,T]$ of size $\Delta t>0$, and apply the uniform discretization of $[-1,1]$ of size $\Delta x=\frac{1}{256}$. The reference values (for
Figure 1b) are generated for the smallest mesh size $\Delta t=2^{-14}$. And
$500$ realizations are chosen to approximate the expectations.
\begin{figure}[htbp]
  \centering
  \begin{minipage}[b]{5.2 cm}\label{Fig1a}
   \begin{center} a) $\varepsilon = 0$  \end{center}
   \centering
    \includegraphics[width=1.078\textwidth]{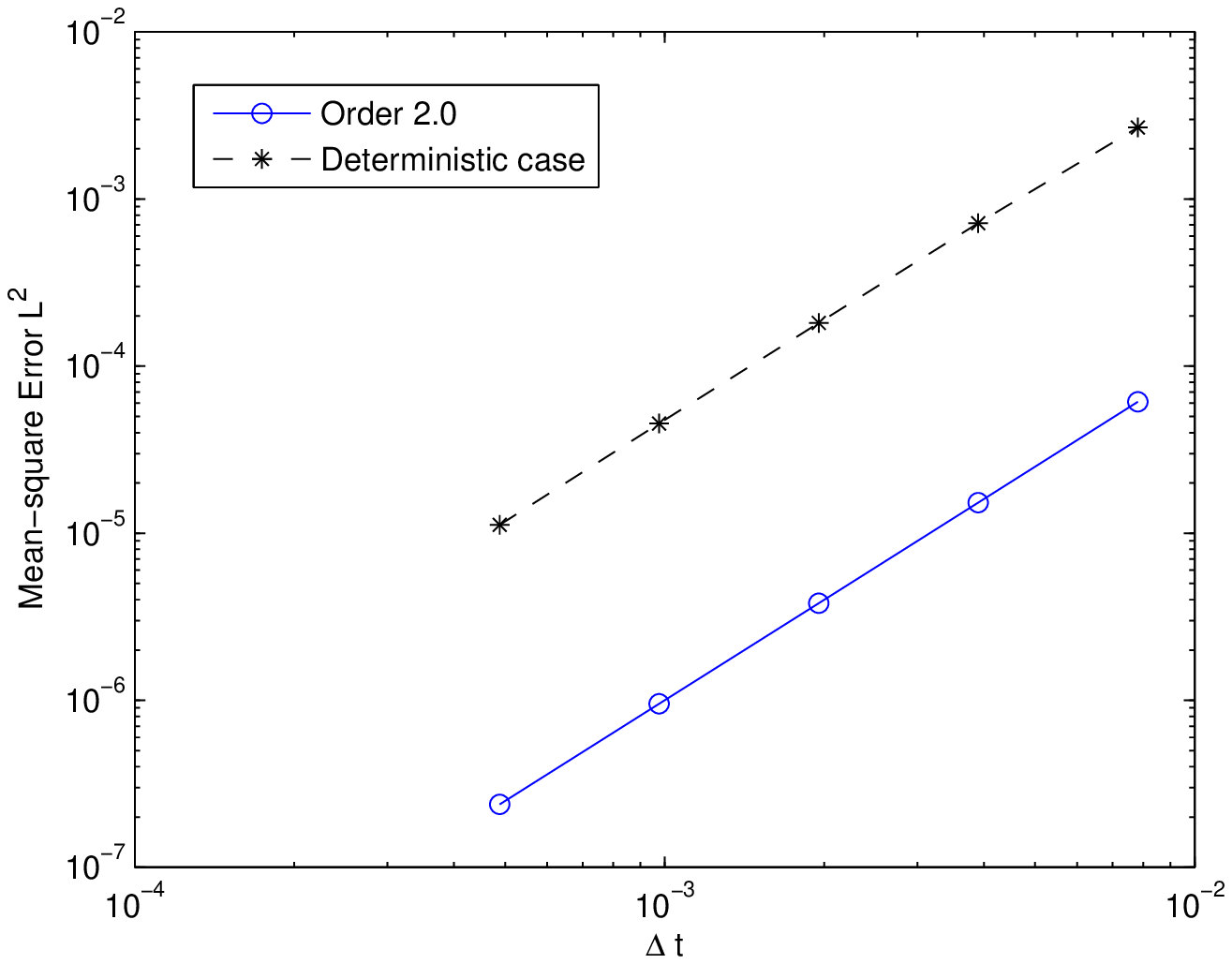}
    \end{minipage}
      \begin{minipage}[b]{5.2 cm}\label{Fig1b}
     \begin{center} b) $\varepsilon = \sqrt{2}$    \end{center}
     \centering
    \includegraphics[width=1.078\textwidth]{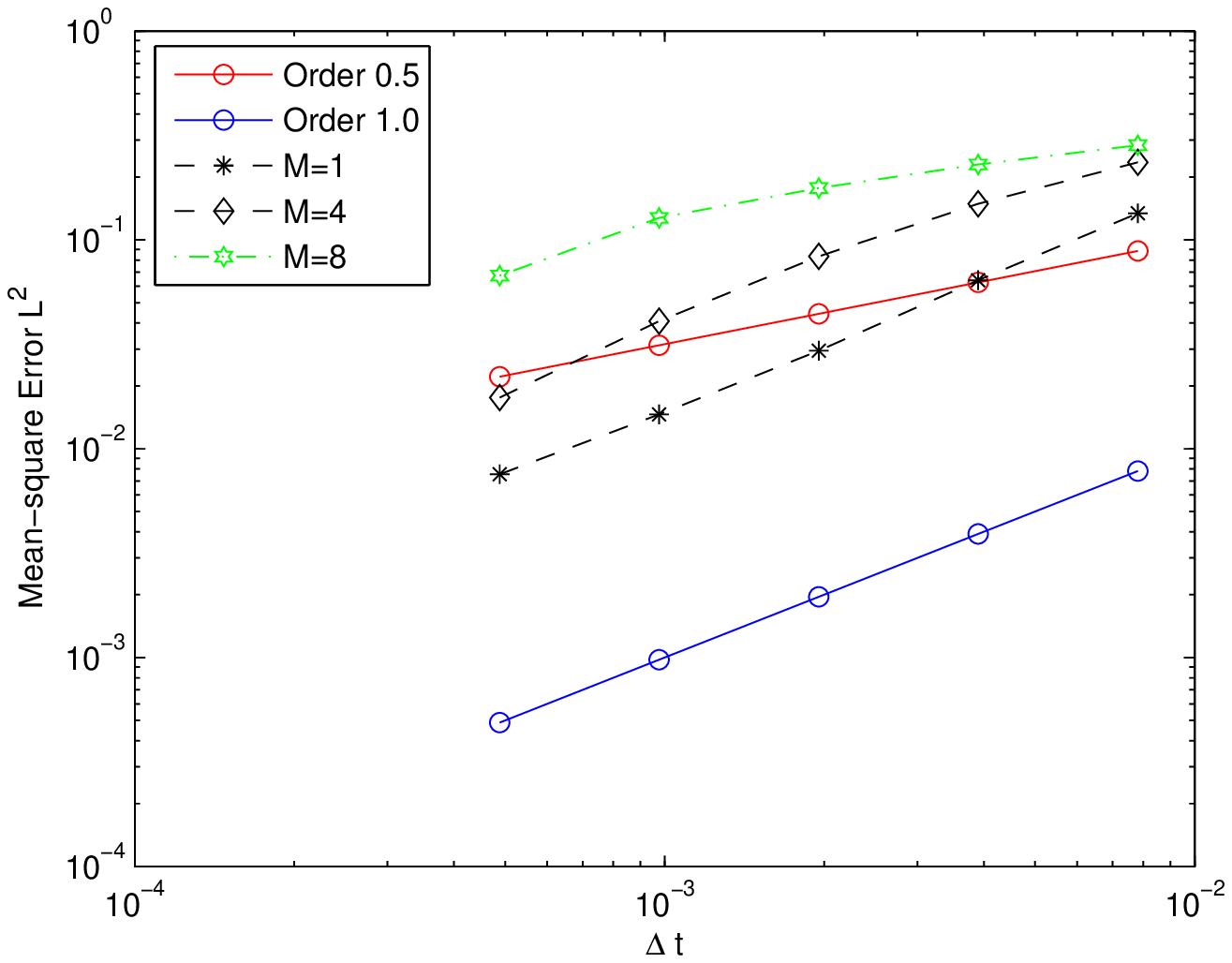}
  \end{minipage}
  \caption{ a) Rates of convergence for the deterministic case in the norm $\Vert \psi(T) - \phi^{N}\Vert_{{\mathbb L}^2}$ ($T = \frac{1}{4}$, $\varepsilon = 0$, $\Delta x = \frac{1}{256}$, $\Delta t \in \{ 2^{-i};\, 7 \leq i \leq 11\}$).
  b) Rates of convergence for the
  stochastic NLS driven by $W(t) = \sum_{\ell=1}^M \frac{1}{\ell} \sin (\pi \ell x) \beta_\ell(t)$ in the norm $\bigl(E[\Vert \psi(T) - \phi^{N}\Vert^2_{{\mathbb L}^2}]\bigr)^{1/2}$
  ($T = \frac{1}{4}$, $\varepsilon = \sqrt{2}$, $\Delta t \in \{ 2^{-i};\, 7 \leq i \leq 11\}$).}
\end{figure}

\begin{figure}
\centering
  \includegraphics[width=0.5\textwidth]{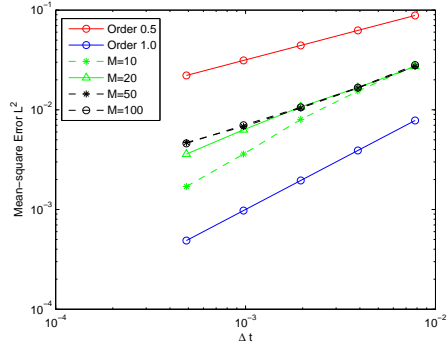}
  \caption{Rates of convergence for the
  stochastic NLS driven by $W(t) = \sum_{\ell=1}^M \frac{1}{\ell^2} \sin (\pi \ell x) \beta_\ell(t)$ in the norm $\bigl(E[\Vert \psi(T) - \phi^{N}\Vert^2_{{\mathbb L}^2}]\bigr)^{1/2}$
  ($T = \frac{1}{4}$, $\varepsilon = \sqrt{2}$, $\Delta t \in \{ 2^{-i};\, 7 \leq i \leq 11\}$).}\label{fig2}
\end{figure}

We consider $\varepsilon=0$ first: Figure 1a) shows order $2$ for the ${\mathbb L}^2$-error of the midpoint scheme. The observations
are different in the stochastic case ($\varepsilon=\sqrt{2}$) where different sorts of Wiener processes depending on $M$
are used: as is displayed in Figure 1b), the strong order of convergence
for $M=1$ and $M=4$ is approximately to $1$, but it drops to $0.5$ approximately for value $8$ of $M$.
As stated in remark 4.6, the convergence order depends on the regularity of the solution which depends on the property of operator ${\bf Q}$, so we consider large $M=50$, the numerical convergence order is only approximate to 0.1.
Consider another spatially more smooth real-valued Wiener process $W(t)=\sum_{\ell=1}^{M}\frac{1}{\ell^2}\sin(\pi\ell x)\beta_{\ell}(t)$, Figure \ref{fig2} shows that the convergence order is approximately to $1$ for $M=10$, and similarly as before it drops to $0.5$ as $M$ grows to $100$.
\begin{figure}[htbp]
  \centering
  \begin{minipage}[b]{5.2 cm}
   \begin{center} a) $\varepsilon = 0$ (charge)  \end{center}
   \centering
    \includegraphics[width=1.078\textwidth]{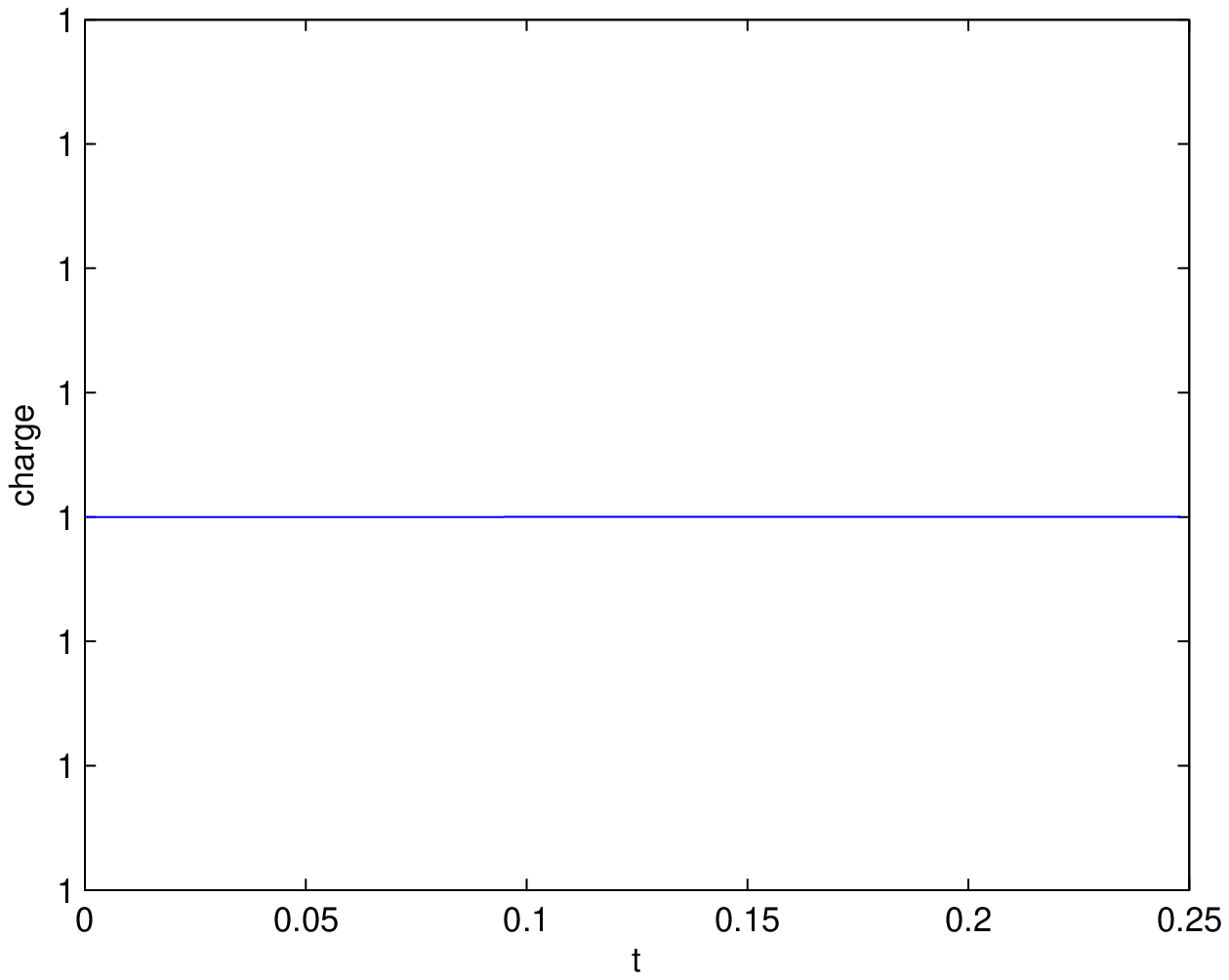}
    \end{minipage}
      \begin{minipage}[b]{5.2 cm}
   \begin{center} b) $\varepsilon = 0$ (energy)  \end{center}
   \centering
    \includegraphics[width=1.078\textwidth]{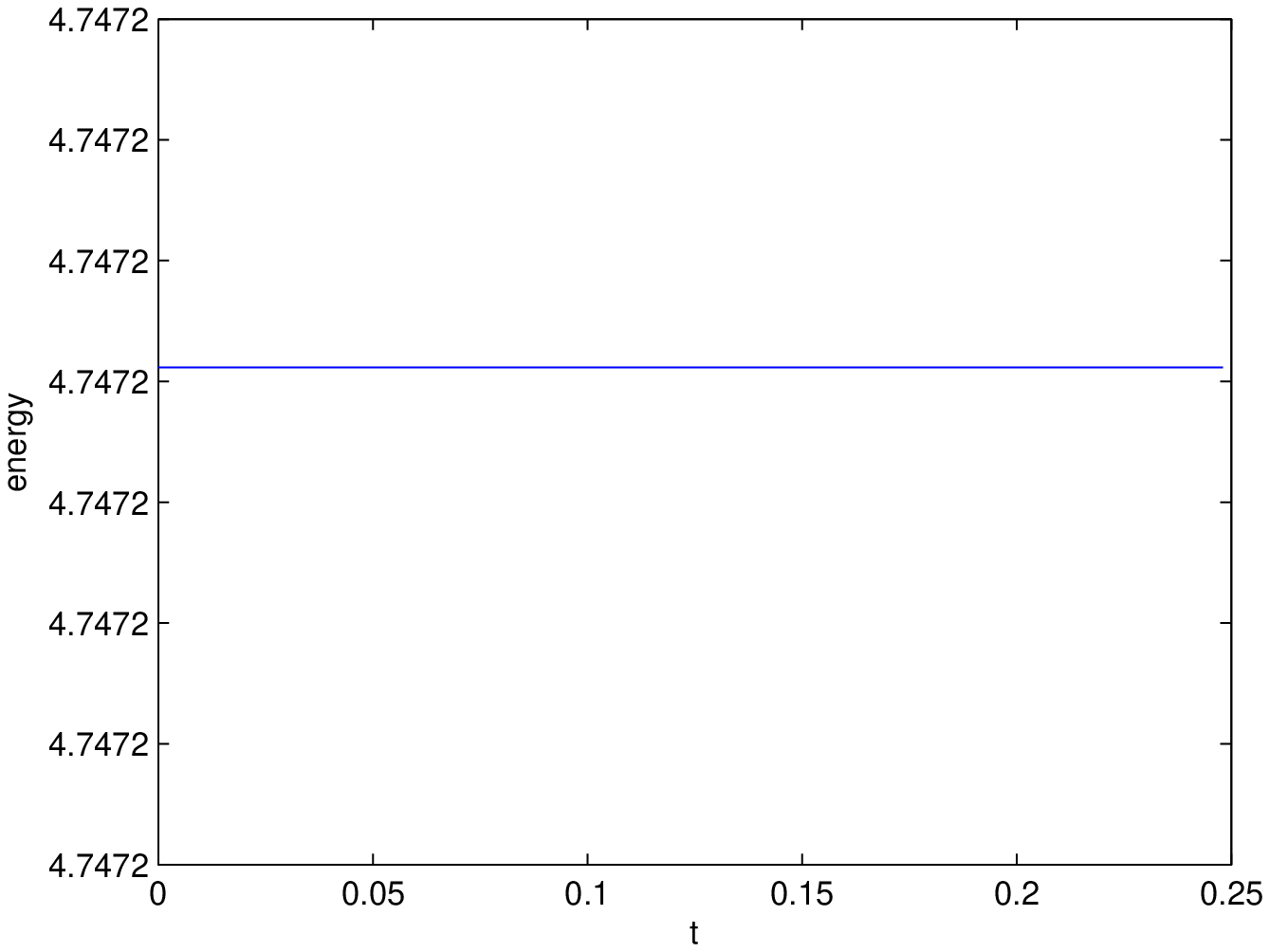}
    \end{minipage}
      \begin{minipage}[b]{5.2 cm}
     \begin{center} c) $\varepsilon = \sqrt{2}$  (charge)  \end{center}
     \centering
    \includegraphics[width=1.078\textwidth]{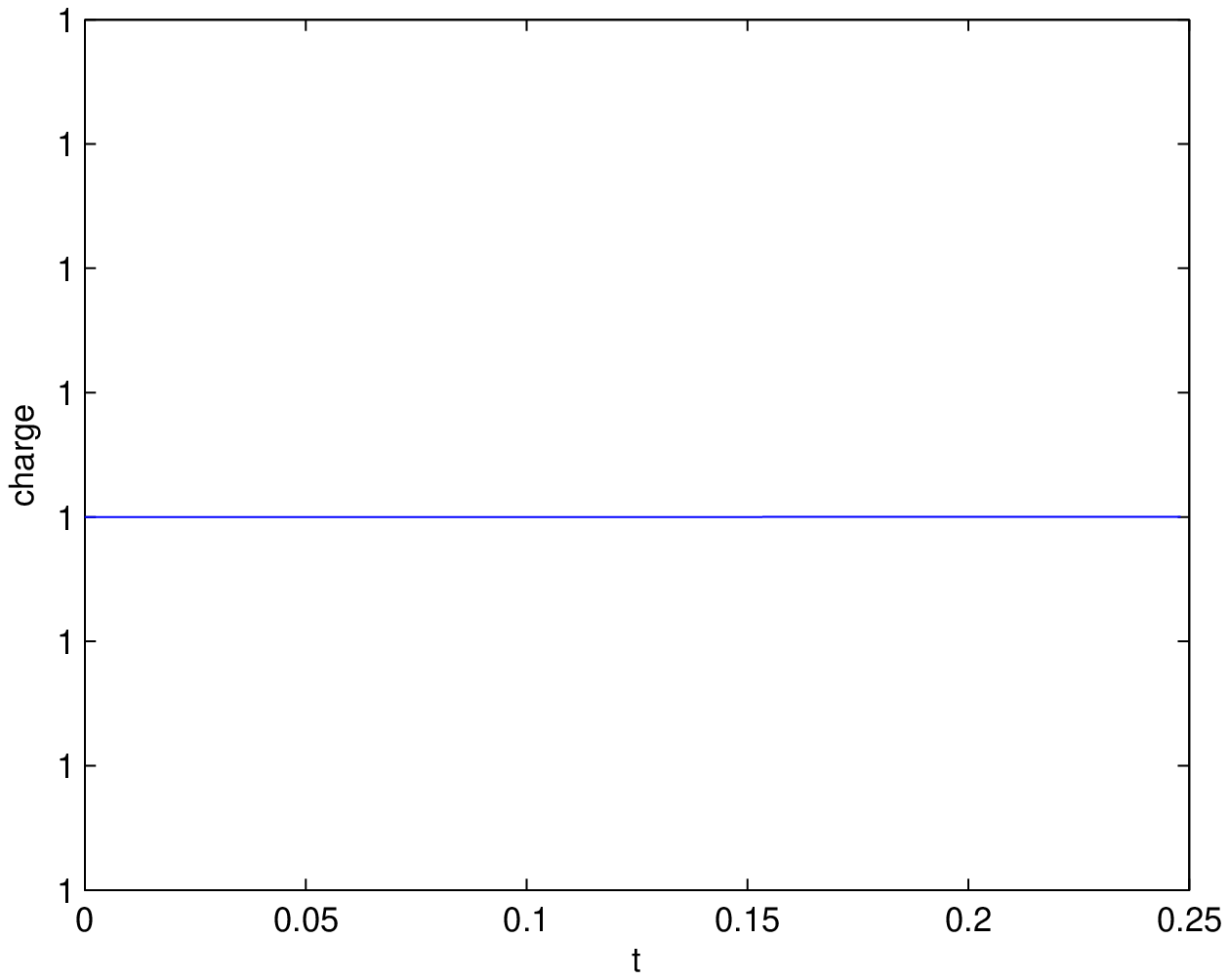}
  \end{minipage}
        \begin{minipage}[b]{5.2 cm}
     \begin{center} d) $\varepsilon = \sqrt{2}$ (energy)   \end{center}
     \centering
    \includegraphics[width=1.078\textwidth]{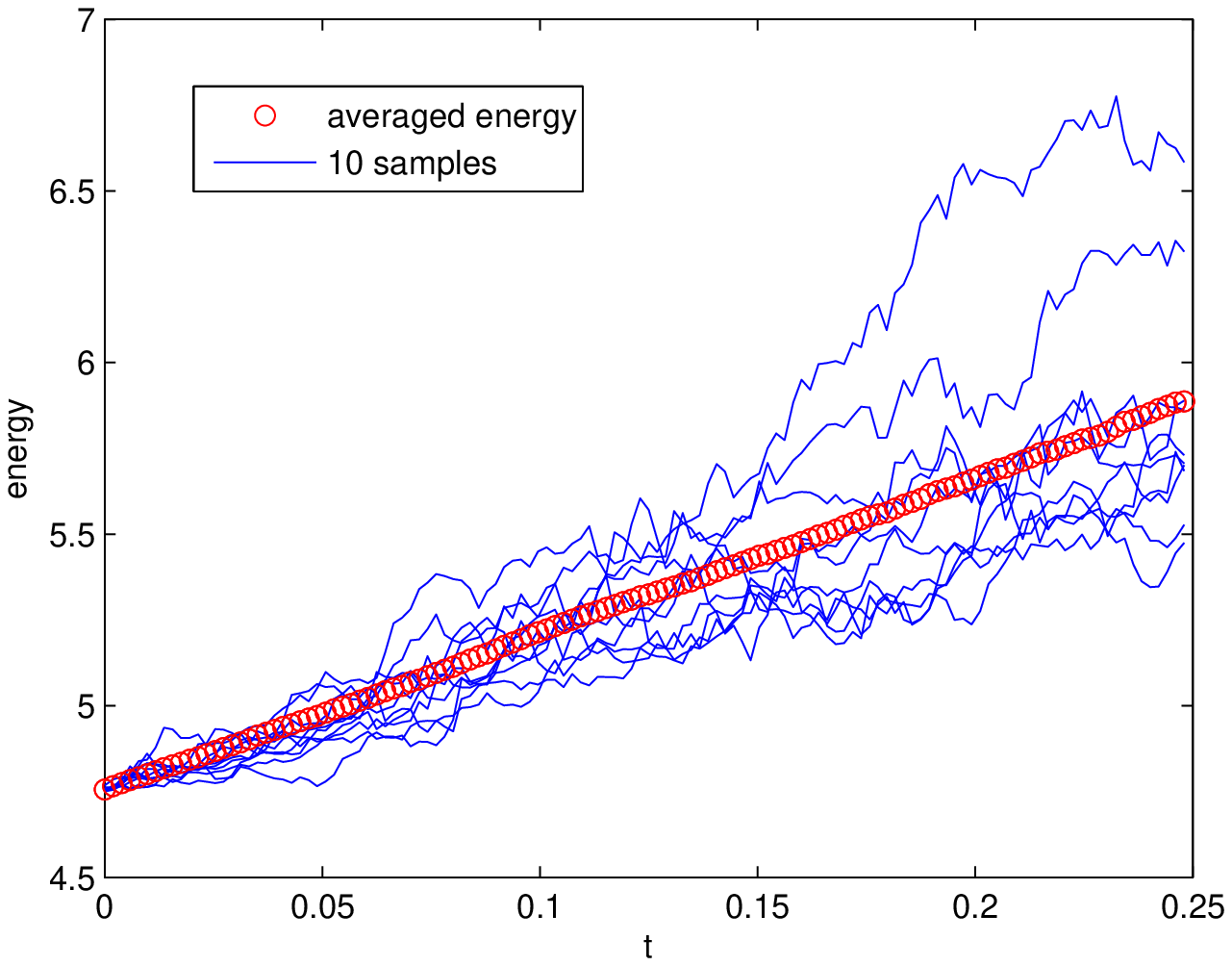}
  \end{minipage}
  \caption{ a) Evolution of charge in deterministic case ($\varepsilon=0$, $\Delta t=2^{-9}$, $T=\frac14$). b) Evolution of energy in deterministic case ($\varepsilon=0$, $\Delta t=2^{-9}$, $T=\frac14$).
  c) Evolution of charge in stochastic case ($\varepsilon=\sqrt{2}$, $\Delta t=2^{-9}$, $T=\frac14$). d) Evolution of energy in stochastic case ($\varepsilon=\sqrt{2}$, $\Delta t=2^{-9}$, $T=\frac14$).}
\end{figure}

The plots in Figures 3a) and 3c) study the conservation of discrete charge of midpoint scheme in both deterministic and stochastic case.
Figures 3b) and 3d) study the evolution of discrete energy of midpoint scheme:  we may observe from Figure 3b) that the discrete energy is preserved by midpoint scheme in the deterministic case; however it is no more a constant in stochastic case, and we observe a linear growth for the discrete averaged energy over $500$ paths.

For comparison with the midpoint scheme \eqref{symp scheme}, we consider a non-structure preserving numerical method
\[
\phi^{n+1}=\phi^{n}+{\bf i}\Delta t A\phi^{n+1} +{\bf i}\Delta t F(\phi^{n+1})-{\bf i}\varepsilon \Delta W_{n},
\]
and apply the same spatial discretization. Figure 4 displays the evolution of discrete charge and energy for the above method: the discrete charge is no more preserved but decrease linearly; the discrete averaged energy is no more linear growth.
\begin{figure}[htbp]
  \centering
  \begin{minipage}[b]{5.2 cm}
   \begin{center} a) $\varepsilon = \sqrt{2}$ (charge)  \end{center}
   \centering
    \includegraphics[width=1.078\textwidth]{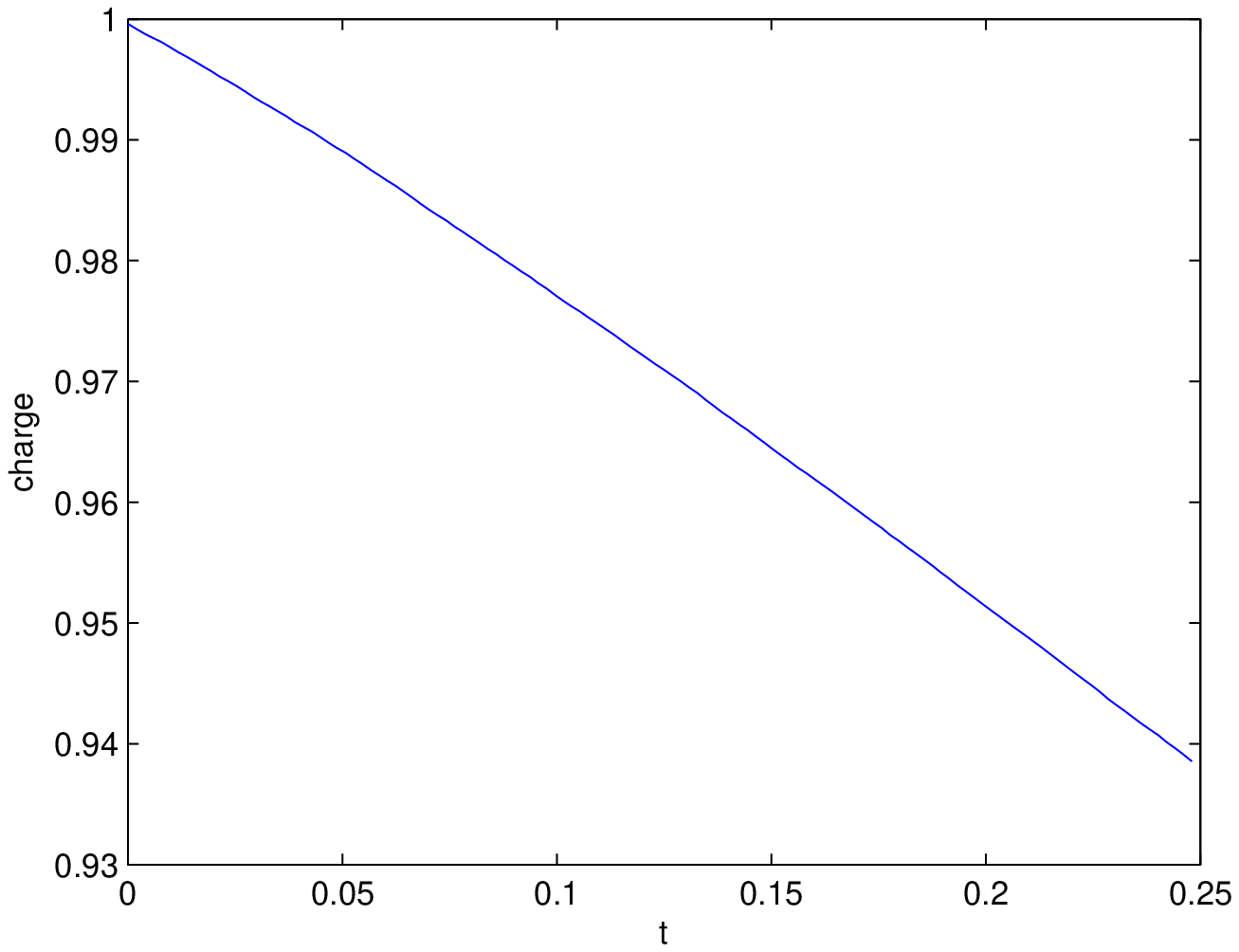}
    \end{minipage}
      \begin{minipage}[b]{5.2 cm}
   \begin{center} b) $\varepsilon = \sqrt{2}$ (energy)  \end{center}
   \centering
    \includegraphics[width=1.078\textwidth]{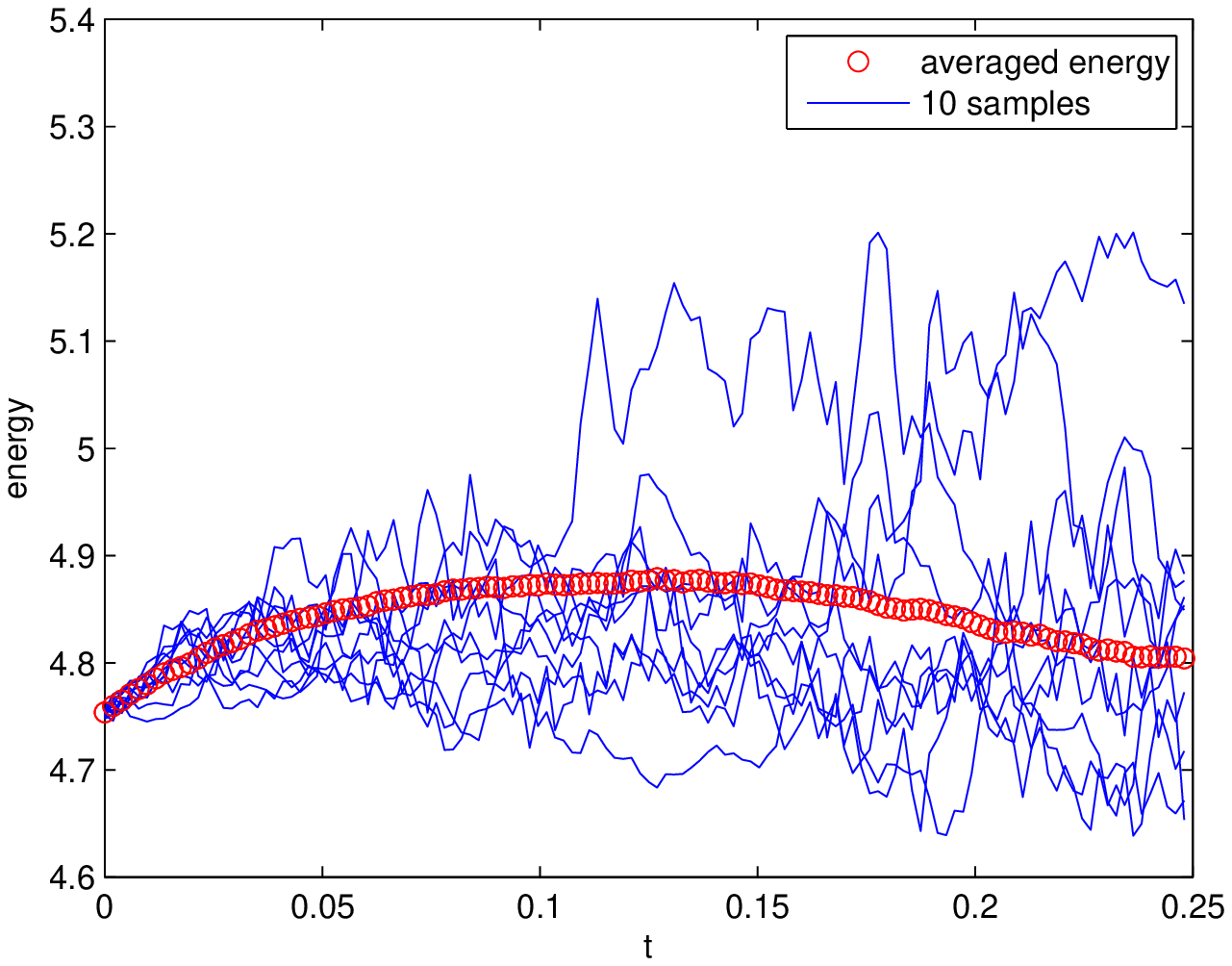}
    \end{minipage}
    \caption{a) Evolution of charge in stochastic case ($\varepsilon=\sqrt{2}$, $\Delta t=2^{-9}$, $T=\frac14$). b) Evolution of energy in stochastic case ($\varepsilon=\sqrt{2}$, $\Delta t=2^{-9}$, $T=\frac14$).}
\end{figure}

\section*{Acknowledgments} Authors would like to thank Prof. Dr. Arnulf Jentzen for helpful suggestions and discussions.

\end{document}